\documentclass[a4paper, 12pt]{amsart} 
\usepackage{amsthm,amsmath,amssymb,amsfonts}
\usepackage{enumerate,mathtools,proof,url,tikz,graphicx}
\usetikzlibrary{cd}
\usepackage[dvipsnames]{xcolor}
\usepackage[colorlinks=true, linkcolor=blue,  citecolor=Green]{hyperref}
\usepackage[headings]{fullpage}

\usepackage{etoolbox}
\AtBeginEnvironment{definition}{%
  \pushQED{\qed}
  
}
\AtEndEnvironment{definition}{
  \popQED
}

\theoremstyle{definition}
\newtheorem{theorem}{Theorem}[section]
\newtheorem{definition}[theorem]{Definition}

\newtheorem{lemma}[theorem]{Lemma}

\newtheorem{proposition}[theorem]{Proposition}
\newtheorem{corollary}[theorem]{Corollary}

\newtheorem{example}[theorem]{Example}
\newtheorem{remark}[theorem]{Remark}

\newtheorem{observation}[theorem]{Observation}

\newcommand{\defarrow}{\stackrel{\text{def}}{\Longleftrightarrow}}

\newcommand{\N}{\mathbb{N}}
\newcommand{\HA}{\mathbf{HA}}

\newcommand{\IQC}{\mathrm{IQC}}
\newcommand{\IZF}{\mathbf{IZF}}

\newcommand{\AxScheme}[2]{{#1}\text{-}\mathbf{#2}}

\newcommand{\LEM}{\mathbf{LEM}}
\newcommand{\DNE}{\mathbf{DNE}}

\newcommand{\CatE}{\mathcal{E}}

\newcommand{\mycoloneqq}{\coloneqq} 
\newcommand{\myeqq}{=} 

\newcommand{\Arithl}{\mathcal{L}_{\mathrm{Arith}}}
\newcommand{\Sucs}{\mathrm{s}} 
\newcommand{\Sucm}{\mathrm{s}} 
\newcommand{\univfml}[1]{\varphi_{#1}}

\newcommand{\langE}{\mathcal{L}_{\mathcal{E}}} 
\newcommand{\langH}{\mathcal{L}_{H}} 
\newcommand{\langX}{\mathcal{L}_{X}}
\newcommand{\truem}{\mathrm{true}}
\newcommand{\subinp}[1]{[\![ #1 ]\!]}

\newcommand{\transpose}[1]{\widehat{#1}}

\newcommand{\SubtoposjE}{\mathcal{E}_{j}}
\newcommand{\jshfunc}{L_{j}}
\newcommand{\jsh}[1]{L_{j} {#1}}
\newcommand{\CljSubE}{\mathrm{Cl}_{j}\mathrm{Sub}_{\mathcal{E}}}
\newcommand{\SubEj}{\mathrm{Sub}_{\mathcal{E}_{j}}}

\newcommand{\extjtrans}[1]{{#1}^{j}}
\newcommand{\shjtrans}[1]{{#1}^{L_{j}}}
\newcommand{\intjtrans}[1]{{#1}^{*}}
\newcommand{\forceinP}{\Vdash_{\mathbb{P}}}
\newcommand{\KforceinP}{\Vdash_{\mathbb{P}}^{\mathrm{K}}}

\newcommand{\LopE}{\mathrm{Lop}}
\newcommand{\DLopE}{\mathrm{DLop}}
\newcommand{\isLopE}{\mathrm{is}\textrm{-}\mathrm{lop}}
\newcommand{\inLopE}{\in\mathrm{Lop}}
\newcommand{\inDLopE}{\in\mathrm{DLop}}
\newcommand{\subLopE}{\subseteq\mathrm{Lop}}
\newcommand{\subDLopE}{\subseteq\mathrm{DLop}}
\newcommand{\LopFE}{\mathrm{LopFrm}}
\newcommand{\PosetP}{\mathbb{P}}
\newcommand{\PosetQ}{\mathbb{Q}}
\newcommand{\inPosetP}{\in\mathbb{P}}
\newcommand{\geqinP}[1]{\geq_{\mathbb{P}} {#1}}

\newcommand{\LforceinP}{\Vdash_{\mathbb{P}}^{\mathrm{L}}}
\newcommand{\singletonX}[1]{\{\, #1\,\}_{X}}
\newcommand{\intunitX}[2]{\eta_{X} [#1, #2]} 
\newcommand{\intdefofLX}[2]{\Lambda_{X} [#1, #2]} 
\newcommand{\intLX}[1]{L_{*}{X} [#1]}

\newcommand{\opPosetP}{\mathbb{P}^{\mathrm{op}}}
\newcommand{\rest}[2]{{#1}\vert_{#2}}

\newcommand{\subobclsP}{\Omega_{\mathbb{P}}}
\newcommand{\GrothcovP}{J_{\mathbb{P}}}
\newcommand{\lopP}{C_{\mathbb{P}}}
\newcommand{\PreshtopP}{\mathbf{PSh}_{\mathcal{E}} (\mathbb{P}^{\mathrm{op}})}
\newcommand{\ShtopP}{\mathbf{Sh}_{\mathcal{E}} (\mathbb{P}^{\mathrm{op}}, J_{\mathbb{P}})}

\newcommand{\Lopclosure}[1]{\overline{#1}}
\newcommand{\stlop}[1]{\mathbb{P}_{#1}}

\newcommand{\myaxiom}[1]{\fbox{#1}:}
\newcommand{\myinfrule}[1]{\fbox{#1}:}
\newcommand{\myfmlcls}{\mathcal{R}}

\newcommand{\EquivP}[1]{\mathrm{E}_{#1}[\PosetP]}
\newcommand{\intlnot}[1]{\lnot_{#1}}

\newcommand{\MonoP}[1]{\mathrm{M}_{#1}[\PosetP]}
\newcommand{\NonoP}[1]{\mathrm{N}_{#1}[\PosetP]}

\newcommand{\Trp}[3]{\mathrm{Trp}_{#1}^{#2}[#3]}
\newcommand{\Cl}[3]{\mathrm{Cl}_{#1}^{#2}[#3]}

\newcommand{\DNEAx}[1]{\lnot\lnot{#1} \rightarrow {#1}}

\newcommand{\Eff}{\mathcal{E}\!f\!f}
\newcommand{\undset}[1]{|#1|}
\newcommand{\powN}{\mathcal{P}(\mathbb{N})}
\newcommand{\intinEff}[1]{[\![#1]\!]}
\newcommand{\OmegaEff}{\mathcal{P}(\mathbb{N})}
\newcommand{\leqEff}{\sqsubseteq}
\newcommand{\eqEff}{\equiv}
\newcommand{\Rel}[1]{\mathrm{Rel}_{=_{#1}}}
\newcommand{\LopEff}{\mathrm{Lop}(\mathcal{E}\!f\!f)}
\newcommand{\DLopEff}{\mathrm{DLop}(\mathcal{E}\!f\!f)}
\newcommand{\upset}[1]{\uparrow(#1)}

\newcommand{\Klapp}{\cdot}
\newcommand{\rKlapp}[1]{\cdot^{#1}}

\newcommand{\PFuncset}{\mathrm{PF}}

\newcommand{\Tfrealat}[2]{{#1} \Vdash_{T} {#2} \mathrel{\mathbf{r}}}

\newcommand{\leqreduce}{\leq_{\mathrm{LT}}}
\newcommand{\Tjump}[1]{\emptyset^{(#1)}}

\newcommand{\Prealat}[2]{[\![ {#1}\Vdash_{\mathbb{P}} {#2} ]\!]}
\newcommand{\jreal}[1]{[\![ {#1}^{j} ]\!]}

\title{A $j$-translation with Kripke forcing relation}
\keywords{$j$-translation, local operator, elementary topos, intuitionistic logic, Heyting arithmetic, semi-classical axiom, realizability, effective topos}
\date{\today}

\author{Satoshi Nakata} 
\address{Graduate School of Informatics, Nagoya University, 
    Furo-cho, Chikusa-ku, Nagoya-shi, 464-8601, Japan}
\email{nakata.satoshi.t9@f.mail.nagoya-u.ac.jp}

\begin{document}

\begin{abstract}
    In this paper, we introduce a translation that combines the $j$-translation with Kripke forcing in the internal logic of an elementary topos.
    First, we show that our translation is sound for intuitionistic first-order logic and Heyting arithmetic.
    Furthermore, its interpretation in the effective topos provides an extension of the sheaf model of realizability introduced by de Jongh and Goodman.
    As an application, we systematically investigate translations for semi-classical axioms.
    Based on this investigation, we establish a separation result on semi-classical arithmetic, which cannot be obtained using the usual $j$-realizability.
\end{abstract}

\maketitle

\setcounter{tocdepth}{1} 
\tableofcontents

\section{Introduction}\label{sec:intro}

\subsection{$j$-translation in proof theory and topos theory}
In intuitionistic proof theory, syntactic translations, including the negative translation (or the double negation translation), have provided many insights and applications for decades.
The first application dates back to G\"{o}del's relative consistency proof for Peano arithmetic (see \cite[Chapter~2~Section~8]{ConstructivismvolI} for the history).
However, applications of such syntactic translations are not limited to consistency proofs.
In fact, typical variants such as Friedman's $A$-translation \cite{Friedman78} and the $\$ $-negative translation \cite{FlaggFriedman86} (this terminology is due to \cite{Ishihara00})
are useful for proving partial conservation results.
Systematic studies on such applications can be found in \cite{FujiwaraKurahashi23, Ishihara00, Leivant85}.
Currently, it is well known that these important translations can be uniformly described as a \emph{$j$-translation} associated with a nucleus $j$.
A detailed summary of the $j$-translation in proof theory can be found in \cite{Aczel01}.
Recently, there have been syntactic studies on an extension to G\"{o}del's system $\mathrm{T}$ \cite{Xu20} and the Kuroda-style $j$-translation \cite{vdBerg19}.

Furthermore, nucleus and its associated $j$-translation naturally appear in topos theory as well.
Lawvere and Tierney showed a one-to-one correspondence between an internal nucleus on the subobject classifier of a topos $\CatE$ and a subtopos of $\CatE$, 
while exploring logical aspects of topos theory.
In this topos-theoretic context, such an internal nucleus is called a \emph{local operator} (or \emph{Lawvere-Tierney topology}).
From the perspective of the internal logic of $\CatE$, a local operator $j$ can be regarded as a transformation on formulas in the internal language, 
and the $j$-translation describes the internal logic of the associated subtopos $\SubtoposjE$.

\subsection{Local operator as generalized oracle}
In addition to this correspondence in topos theory, 
Hyland's discovery of the effective topos $\Eff$ \cite{Hyland82} linked the $j$-translation to realizability theory.
It is known that the internal logic of $\Eff$ corresponds to the Kleene realizability interpretation, and its local operators can be regarded as generalized oracles.
For example, for a partial function $f :\subseteq \N\to\N$ on natural numbers, there exists a local operator $j_{f}$ in $\Eff$.
The internal logic of the associated subtopos $\Eff_{j_{f}}$ coincides with Kleene realizability using $f$ as an oracle (realizability relative to $f$).
In other words, the $j$-translation in $\Eff$ yields a realizability relative to a generalized oracle. 
This is often called \emph{$j$-realizability} \cite{Kihara24rethinking, LeevOosten13, vOosten14}.
This relationship between local operator and oracle has been studied extensively in categorical realizability (see \cite{vOostenbook} as a standard textbook).

\subsection{A sheaf model of realizability}\label{subsec:sheafofreal}
On the other hand, it has long been known that combining forcing with realizability relative to an oracle is a useful method in intuitionistic proof theory.
De Jongh's unpublished work in 1969 was the first to explicitly consider a sheaf model of realizability.
He introduced this method to prove a certain conservation result between Heyting arithmetic and intuitionistic predicate logic, known as de Jongh's theorem. 
Later, Goodman introduced a similar realizability notion for Heyting arithmetic in all finite types to prove a conservation result on the axiom of choice, known as Goodman's theorem \cite{Goodman78}.
In this paper, we refer to this realizability notion as \emph{de Jongh-Goodman realizability}.
Specifically, they defined this notion by combining realizability relative to a partial function $f$ on natural numbers with weak forcing on the poset $(T, \subseteq)$, 
where $T$ is a set of partial functions on natural numbers and $\subseteq$ is the extension relation.
Since then, their arguments have been applied to separation problems on variants of the Fan theorem \cite{LubarskyRathjen13} and have been investigated syntactically \cite{Coquand13, vdBergvSlooten18}.
In particular, van Oosten pointed out that de Jongh-Goodman realizability can be understood as a PCA-valued sheaf in \cite{vOostenPhD, vOosten91}, 
and discussed such realizability based on Kripke forcing in \cite[Section~4.6.2]{vOostenbook}.

\subsection{Contributions}
In this paper, we aim to understand this sheaf model of realizability from the perspective of $j$-translation.
Our main purpose is to propose a new translation that combines $j$-translation in the internal logic of a topos with Kripke forcing relation, 
and to investigate its properties from both syntactical and semantical sides.
This approach differs from van Oosten's understanding based on PCA-valued sheaf \cite{vOostenPhD, vOosten91, vOostenbook}.

First, in Section~\ref{sec:lopframe}, we define a translation $j\forceinP\varphi$ parametrized by 
a local operator $j$ and an internal subposet $\PosetP$ of local operators in the internal logic of an arbitrary elementary topos (Definition~\ref{def:forceinP}).
The local operator $j$ and the internal subposet $\PosetP$ generalize the roles of a partial function $f$ and a subposet $T$ of partial functions in de Jongh-Goodman realizability, respectively.
The first main result is that this translation is sound for intuitionistic first-order logic $\IQC$ and Heyting arithmetic $\HA$ (Theorem~\ref{thm:soundness}, Theorem~\ref{thm:soundnessHA}).

Next, in Section~\ref{sec:realizability}, we identify the interpretation of our translation $j\forceinP\varphi$ in the effective topos.
We call this \emph{$\PosetP$-realizability} (Definition~\ref{def:Preal}).
We show that $\PosetP$-realizability is an extension of de Jongh-Goodman realizability under a mild assumption (Theorem~\ref{thm:specialcase}).

Finally, in Section~\ref{sec:semiclassical}, with future proof-theoretic applications in mind, 
we systematically investigate our translation for semi-classical axioms.
As an application, we prove a separation result on semi-classical axioms in Heyting arithmetic using $\PosetP$-realizability 
(Theorem~\ref{thm:separation}: $\AxScheme{\Sigma_{n+1}}{DNE} + \lnot\lnot(\AxScheme{\Pi_{n+1}\lor\Pi_{n+1}}{DNE})$ does not imply $\AxScheme{\Pi_{n+1}\lor\Pi_{n+1}}{DNE}$ over $\HA$).
Note that most of the proof does not require computability-theoretic arguments, and is completed by calculations of the internal logic. 

We consider this separation result to be significant in two distinct senses.
First, this separation cannot be shown using the usual $j$-realizability, 
and thus it reveals an essential difference between $\PosetP$-realizability and $j$-realizability, or between our translation and the usual $j$-translation (Corollary~\ref{cor:comparison}).
Second, these axioms naturally appear in previous studies on the prenex normal form theorem, 
and the verification of their separation is of proof-theoretic significance (Remark~\ref{rem:PNFT}).

\subsection{Organization}
This paper is organized as follows.
In Section~\ref{sec:preliminaries}, we review the internal logic (higher-order logic) of a topos.
In Section~\ref{sec:intjtrans}, we introduce a $j$-translation, treating $j$ as a variable in the internal logic of a topos.
We also supplement the relationship between this $j$-translation and the translation based on the associated sheaf functor.
In Section~\ref{sec:lopframe}, we define our translation and prove its soundness.
In addition, we also discuss the relationship between our translation and the associated sheaf functor, a Kuroda-style translation, and a corresponding internal sheaf topos.
In Section~\ref{sec:realizability}, we interpret our translation in the effective topos to provide the corresponding realizability semantics.
Furthermore, we consider a concrete example and show that it is equivalent to de Jongh-Goodman realizability.
Finally, In Section~\ref{sec:semiclassical}, we turn back to the general setting and investigate the translations of the double negation elimination and the law of excluded middle, systematically.
Based on this systematic investigation, we prove a separation result on semi-classical arithmetic using $\PosetP$-realizability.

\subsection*{Acknowledgement}
The author would like to thank Benno van den Berg, Makoto Fujiwara, and Dick de Jongh for reading a draft of this paper and giving many helpful comments.
The author is also grateful to Gerard Glowacki, Koshiro Ichikawa, Akihito Kajikawa, Masamori Kaku, Takayuki Kihara, Ulrich Kohlenbach, and Jaap van Oosten for valuable discussions.
This work was partially supported by JSPS KAKENHI Grant Number 25KJ0161.

\section{Preliminaries}\label{sec:preliminaries}

\subsection{The internal language of a topos}\label{subsec:intlang}
The proofs in Section~\ref{sec:intjtrans}, \ref{sec:lopframe} and \ref{sec:semiclassical} are formalizable within a certain higher-order logic. 
To make this explicit, we review the internal language $\langE$ of an elementary topos $\CatE$ in detail.
While various conventions exist in the literature, we follow the notation of the \emph{type theory based on equality} introduced in \cite[Part~II]{LambekScott}.

Each \emph{type} in $\langE$ corresponds to an object of $\CatE$.
In particular, $1$, $\Omega$, and $PX$ denote the terminal object, the subobject classifier, and the power object of an object $X$, respectively.
We assume that each type is associated with countably many free variables.
A \emph{context} is a finite list of typed variables, typically written as $\Gamma = [x_1:X_1, \dots, x_n:X_n]$.
The concatenation of contexts $\Gamma$ and $\Gamma'$ is denoted by $\Gamma\cup\Gamma'$.
Given a context $\Gamma$, we define \emph{terms in context $\Gamma$} (written as $(t:X\quad \Gamma)$ or $t[\vec{x}]$) 
inductively as follows: 
\begin{itemize}
    \item $*:1$; 
    \item $x:X$ for each variable $x$ in $\Gamma$; 
    \item $\langle t, t'\rangle: X\times Y$ if $t:X$ and $t':Y$;
    \item $f(t):Y$ for a morphism $f\colon X\to Y$ in $\CatE$ and a term $t:X$; 
    \item $\{\, x:X \mid \varphi\,\}:PX$ if $\varphi:\Omega$ is a term in $\Gamma\cup [x:X]$.
\end{itemize}
A term of type $\Omega$ is called an ($\langE$-)\emph{formula}, and one with no free variables is called a \emph{sentence}.
The primitive formulas in the type theory based on equality are the following: 
\[\top \mycoloneqq \truem(*):\Omega, \quad 
(t =_X t') \mycoloneqq (=_X(\langle t, t' \rangle)):\Omega, \]
\[(t \in_X u) \mycoloneqq (\in_X(\langle t, u \rangle)):\Omega,\]
where
$\truem \colon 1 \to \Omega$ is the subobject classifier, 
$=_X \colon X \times X \to \Omega$ is the characteristic map of the diagonal map $\Delta_X \colon X \rightarrowtail X \times X$,
and $\in_X \colon X \times PX \to \Omega$ is the evaluation map associated with the power object.
Other logical symbols, such as the bottom symbol $\bot$, the logical connectives $\land$, $\lor$, $\rightarrow$, and the quantifiers $\forall$, $\exists$ 
are defined in terms of these primitive formulas.
For example, universal quantification is defined by:
\[\forall x:X. \varphi[x] \mycoloneqq (\{\, x:X \mid \varphi[x]\,\} =_{PX} \{\, x:X \mid \top\,\}).\]
See \cite[Part~II~Section~2]{LambekScott} for more details.
A term $t:X$ in a context $[x_1:X_1, \dots, x_n:X_n]$ is interpreted as 
a morphism $t\colon X_1\times \dots \times X_n \to X$ in $\CatE$ in the standard way.
For any morphism $\varphi\colon X\to \Omega$, we denote its corresponding subobject by $\subinp{\varphi}\rightarrowtail X$.

We say that a \emph{formula $\varphi$ holds in $\CatE$}, denoted by $\CatE\models\varphi$, 
if its interpretation is the morphism $\top\colon X_1\times \dots \times X_n \to \Omega$.
This is equivalent to saying that the corresponding subobject $\subinp{\varphi}$ is top in the subobject lattice.
This interpretation is sound for the type theory based on equality and, in particular, for intuitionistic logic.


\subsection{Other notation}\label{subsec:notation}
We collect some notation used throughout this paper.
We often omit the subscript $X$ of the membership relation $\in_{X}$ when it is clear from the context.
For any object $X$, terms $t$, $s:PX$, and formula $\varphi$, 
we use the following abbreviations: 
\begin{itemize}
    \item $\forall x\in t. \varphi \mycoloneqq \forall x:X. (x\in_{} t \rightarrow \varphi)$ 
    \item $t\subseteq s \mycoloneqq \forall x\in t. x\in_{} s$
    \item $\forall u\subseteq t. \varphi \mycoloneqq \forall u:PX. (u\subseteq t \rightarrow \varphi)$
\end{itemize}
Unless stated otherwise, $\CatE$ denotes an arbitrary elementary topos. 
Given a local operator $j\colon\Omega\to\Omega$ in $\CatE$, 
we write $\SubtoposjE$ for the associated subtopos and  
$L_{j}\colon \CatE\to\SubtoposjE$ for the associated sheaf functor.

\subsection{Arithmetic}\label{subsec:Arithmetic} 
In this paper, we also consider translations for first-order intuitionistic arithmetic.
We here follow the conventions in \cite{ABHK04, ConstructivismvolI}.
Let $\Arithl$ denote the language of arithmetic consisting of a constant $0$, the successor $\Sucs$, and function symbols for all primitive recursive functions.
Heyting arithmetic $\HA$ is a first-order intuitionistic theory consisting of the successor axiom $\forall x. \lnot (\Sucs(x)=0)$, defining equations for all primitive recursive functions, 
and the induction axiom scheme for all $\Arithl$-formulas.
If an elementary topos $\CatE$ has a natural numbers object $N = (N, 0\colon 1\to N, \Sucm\colon N\to N)$, 
then any $\Arithl$-formula $\varphi[\vec{x}]$ can be canonically interpreted as an $\langE$-formula whose variables range over $N$: 
\[\varphi [\vec{x}] \quad\mapsto\quad \varphi:\Omega \quad [\vec{x}:N].\]
We call the $\langE$-formula the \emph{standard interpretation of $\varphi$ in $\CatE$}.
It is well known that all axioms of $\HA$ hold in $\CatE$ under this interpretation.

\section{Internal $j$-translation}\label{sec:intjtrans}

\subsection{The $j$-translation in the internal language}\label{subsec:intjtrans}
To begin with, we introduce the $j$-translation in the internal language of a topos.
Unlike arguments in locale theory or in proof theory, we emphasize that the internal logic of a topos can formalize quantification ranging over local operators. 
To this end, we first recall the basics of nucleus and the associated $j$-translation in locale theory (see, for example, \cite{JohnstoneStonespaces}).

Let $H = (H, \land, \lor, \rightarrow, \leq)$ be a locale.
An endomorphism $j\colon H\to H$ is called a \emph{nucleus} if it satisfies the following conditions: 
(1) $a \leq j(a)$, (2) $jj(a) \leq j(a)$, and (3) $j(a \land b) = j(a) \land j(b)$.
Note that the third condition can be replaced by (3)$'$ $(a \rightarrow b) \leq (j(a)\rightarrow j(b))$.
Each nucleus $j$ on $H$ induces the associated sublocale $H_{j}\mycoloneqq \{\, a \in H \mid j(a) = a\,\}$ of $j$-closed elements.
Now, let us define an \emph{$\langH$-formula} as a propositional formula over $H$ constructed from $\land$, $\lor$, and $\rightarrow$ 
(this terminology is used only in this section).
Then, the $j$-translation $\extjtrans{\varphi}$ for an $\langH$-formula $\varphi$ is defined as follows: 
\[\extjtrans{p}\mycoloneqq jp \text{ if } p \text{ is an atomic } \langH\text{-formula };\]
\[\extjtrans{(\varphi\land \psi)}\mycoloneqq \extjtrans{\varphi}\land\extjtrans{\psi}; \qquad 
\extjtrans{(\varphi\lor \psi)}\mycoloneqq j(\extjtrans{\varphi}\lor\extjtrans{\psi}); \qquad 
\extjtrans{(\varphi\rightarrow \psi)}\mycoloneqq \extjtrans{\varphi}\rightarrow\extjtrans{\psi}. \]
This translation provides a uniform way to transform $\langH$-formulas into $\mathcal{L}_{H_{j}}$-formulas.

We can apply a similar argument to the subobject classifier $\Omega$ of an elementary topos $\CatE$ and $\langE$-formulas.
In a topos $\CatE$, $\Omega$ is an internal locale. 
An endomorphism $j\colon \Omega\to\Omega$ in $\CatE$ is called a \emph{local operator} if $j$ is an internal nucleus on $\Omega$.
However, a crucial difference is that the internal language $\langE$ can formalize endomorphisms on $\Omega$ using the power object $P\Omega$, which is isomorphic to the exponential $\Omega^{\Omega}$.
That is, we can treat local operators as terms of type $P\Omega$ (note that in a presheaf topos, this data is internally equivalent to a Grothendieck coverage).
This treatment enables us to handle not only local operators but also quantification over them in $\CatE$.

\begin{definition}\label{def:lop}
    Let $j$ denote a variable of type $P\Omega$, and let $\varphi:\Omega$ be an $\langE$-formula.
    We use the following notations: 
    \begin{itemize}
        \item $j\varphi \mycoloneqq (\varphi\in_{} j)$; 
        \item a formula $\isLopE[j]\colon \Omega$ is defined as follows: 
        \[\begin{aligned}
            \isLopE[j] \mycoloneqq 
            &\forall p:\Omega. (p \rightarrow jp) \land \forall p:\Omega. (j(jp) \rightarrow jp) \\
            \land &\forall p, q:\Omega. ((p\rightarrow q) \rightarrow (jp\rightarrow jq)). 
        \end{aligned}\]
    \end{itemize}
    Then a closed term $\LopE:P(P\Omega)$ is defined by $\LopE \mycoloneqq \{\, j:P\Omega \mid \isLopE[j]\,\}$. 
\end{definition}

\begin{remark}\label{rem:POmegavslop}
    Let $j\colon \Omega\to\Omega$ be an endomorphism in $\CatE$, and let $\transpose{j}\colon 1\to P\Omega$ denote its transpose.
    One can check that $j$ is a local operator in $\CatE$ if and only if $\CatE\models (\transpose{j} \in_{} \LopE)$ holds.
    The closed term $\LopE$ naturally induces a subobject of $P\Omega$: 
    \[\LopE = \subinp{\isLopE[j]} \rightarrowtail P\Omega.\] 
    This can be regarded as an internal object of all local operators in $\CatE$.
    In our arguments below, the universal quantification $\forall j\inLopE.\varphi[j]$ can be read as the universal quantification $\forall j:\LopE. \varphi[j]$ over this object $\LopE$.
\end{remark}

Here, we list some basic facts on nucleus (cf. \cite[Lemma~1]{vdBerg19}).
\begin{lemma}\label{lem:loplem}
    Let $\varphi[\vec{x}]$ be an $\langE$-formula. 
    The following implications hold.
    \begin{enumerate}
        \item $\CatE\models \forall j\inLopE \forall p, q:\Omega. ((p\rightarrow jq) \leftrightarrow j(p\rightarrow jq))$.
        \item $\CatE\models \forall j\inLopE. ((j\forall x:X.\varphi) \rightarrow (\forall x:X.j\varphi))$.
        \item $\CatE\models \forall j\inLopE \forall p, q:\Omega. (j(p\lor q) \leftrightarrow j(jp\lor jq))$.
        \item $\CatE\models \forall j\inLopE. ((\exists x:X.j\varphi) \rightarrow (j\exists x:X.\varphi))$.
    \end{enumerate}
\end{lemma}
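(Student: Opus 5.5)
All four items are proved by reasoning inside the internal intuitionistic higher-order logic of $\CatE$. Soundness of the interpretation (Section~\ref{subsec:intlang}) reduces each claim to a derivation in that logic. In each case we fix $j:P\Omega$, assume $\isLopE[j]$, and use only its three clauses: (i) $p\rightarrow jp$; (ii) $j(jp)\rightarrow jp$; (iii) monotonicity $(p\rightarrow q)\rightarrow(jp\rightarrow jq)$. A basic tool is a derived form of (iii) that we apply with $p,q$ instantiated by formulas. If $\varphi\rightarrow\psi$ is provable (possibly under hypotheses not mentioning the bound variable), then $j\varphi\rightarrow j\psi$. This is legitimate because $\Omega$-valued quantification ranges over all formulas and $j\varphi$ abbreviates $\varphi\in j$.

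For (1), the direction $(p\rightarrow jq)\rightarrow j(p\rightarrow jq)$ is clause (i). For the converse, assume $j(p\rightarrow jq)$ and $p$. From $p$, the implication $(p\rightarrow jq)\rightarrow jq$ holds, so by (iii) we get $j(p\rightarrow jq)\rightarrow j(jq)$. Hence $j(jq)$ holds, and by (ii) $jq$ holds. The only care needed is that (iii) is applied under the hypothesis $p$, which is fine since $p$ is just a proposition in context.

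For (2), fix $x:X$. Then $(\forall x.\varphi)\rightarrow\varphi[x]$ holds, so (iii) gives $j(\forall x.\varphi)\rightarrow j\varphi[x]$. Generalizing over $x$ yields the claim. Item (4) is dual. Given $x$ with $j\varphi[x]$, use $\varphi[x]\rightarrow\exists x.\varphi$ and (iii) to obtain $j\exists x.\varphi$; then apply $\exists$-elimination, since the conclusion does not mention $x$.

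For (3), the direction left to right comes from $p\lor q\rightarrow jp\lor jq$, which follows from (i) by cases, together with (iii). For the converse, we show $jp\lor jq\rightarrow j(p\lor q)$. By cases, $jp\rightarrow j(p\lor q)$ follows from $p\rightarrow p\lor q$ and (iii), and similarly for $q$. Applying (iii) once more gives $j(jp\lor jq)\rightarrow j(j(p\lor q))$, and (ii) then yields $j(p\lor q)$.

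No step is a real obstacle. The only point needing attention is justifying the use of clause (iii) under local hypotheses and at formula instances, via instantiation of $\forall p,q:\Omega$. This is routine in the type theory of \cite{LambekScott}, since propositions are terms of type $\Omega$.
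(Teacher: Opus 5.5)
Your derivation is correct. Each item follows from the three clauses of $\isLopE[j]$ as you describe, and because clause (iii) is an internal implication, using it under local hypotheses needs no special justification. The paper gives no proof of its own and simply cites \cite[Lemma~1]{vdBerg19}; your argument is the standard derivation from the nucleus axioms that the citation relies on.
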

Based on these facts, we also have the following equivalences:
\[j\forall x:X. (\varphi\rightarrow j\psi) \leftrightarrow \forall x:X. (\varphi\rightarrow j\psi), \quad 
j\exists x:X. j\varphi \leftrightarrow j\exists x:X.\varphi.\]
For example, \cite{vdBerg19} studies a $j$-translation for first-order formulas.
For simplicity, we restrict our attention to one-sorted first-order formulas.
\begin{definition}[$\langX$-formula]
    Let $X$ be an object in a topos $\CatE$.
    An \emph{$\langX$-formula} is defined inductively as follows: 
    \[\varphi, \psi \Coloneqq R[\vec{x}] \mid 
    \varphi\land\psi \mid \varphi\lor\psi \mid \varphi\rightarrow\psi \mid
    \exists x:X. \varphi \mid \forall x:X. \varphi,\]
    where $R$ is a morphism from $X^{m}$ to $\Omega$ in $\CatE$, and $m$ is the length of $\vec{x}$.
    Here, $\bot$ is also considered an $\langX$-formula, and $\lnot\varphi$ stands for $\varphi\rightarrow\bot$.
\end{definition}
In other words, an $\langX$-formula is an $\langE$-formula whose atomic formulas are given by morphisms $R\colon X^{m}\to \Omega$ in $\CatE$ 
and whose variables range over $X$.
For instance, 
if $\CatE$ has a natural numbers object $N$, 
the standard interpretation (see Section~\ref{subsec:Arithmetic}) of each $\Arithl$-formula is an $\mathcal{L}_N$-formula.

Then, the $j$-translation for $\langX$-formulas is defined internally as follows.
Note that $j$ in the following definition is a variable of type $P\Omega$ intended to represent an internal local operator.
\begin{definition}\label{def:intjtrans}
    For any $\langX$-formula $\varphi[\vec{x}]$, 
    we inductively define $\intjtrans{\varphi}$ as follows: 
    \[\begin{aligned}
        \intjtrans{(R[\vec{x}])} &\mycoloneqq jR[\vec{x}]; \\
        \intjtrans{(\varphi\land\psi)} &\mycoloneqq \intjtrans{\varphi}\land \intjtrans{\psi}; \\
        \intjtrans{(\varphi\lor\psi)} &\mycoloneqq j(\intjtrans{\varphi}\lor \intjtrans{\psi}); \\
        \intjtrans{(\varphi\rightarrow\psi)} &\mycoloneqq \intjtrans{\varphi}\rightarrow \intjtrans{\psi}; \\
        \intjtrans{(\exists y:X. \varphi[\vec{x}, y])} &\mycoloneqq j(\exists y:X. \intjtrans{\varphi}[j, \vec{x}, y]); \\
        \intjtrans{(\forall y:X. \varphi[\vec{x}, y])} &\mycoloneqq \forall y:X. \intjtrans{\varphi}[j, \vec{x}, y]. 
    \end{aligned}\]
    We call $\intjtrans{\varphi}$ the \emph{(internal) $j$-translation of $\varphi$}:
    \[\varphi:\Omega \quad [\vec{x}:X] \quad\mapsto\quad \intjtrans{\varphi}:\Omega \quad [j:P\Omega, \vec{x}:X].\]
    In addition, given a local operator $j\colon \Omega\to\Omega$ in $\CatE$, 
    we write $\extjtrans{\varphi}$ for $\intjtrans{\varphi}[\transpose{j}, \vec{x}]$, which is obtained by substituting the transpose $\transpose{j}\colon 1\to P\Omega$ into $\intjtrans{\varphi}$. 
\end{definition}

\subsection{Relationship to the associated subtopos}\label{subsec:shjtrans}
It is well known that for any topos $\CatE$, each local operator $j$ induces the associated subtopos $\SubtoposjE$ of $j$-sheaves.
This is analogous to the associated sublocale in locale theory.
Recall that for a locale $H$, the $j$-translation transforms an $\langH$-formula into an $\mathcal{L}_{H_{j}}$-formula.
However, this does not hold for the $j$-translation of $\langX$-formulas.
Since an object $X$ is not necessarily a $j$-sheaf, the $j$-translation $\extjtrans{\varphi}$ of an $\langX$-formula $\varphi$ is not an $\mathcal{L}_{\CatE_{j}}$-formula in general.
Therefore, \cite{Hyland82} defined $j$-translations that modify not only logical connectives but also the types of variables.
For comparison with our $j$-translation, let us briefly look at the translation based on the associated sheaf functor $\jshfunc\colon \CatE \to \SubtoposjE$, introduced as $\subinp{\varphi}_{j}$ in \cite[Section~5]{Hyland82}.

\begin{definition}\label{def:shjtrans}
    For any $\langX$-formula $\varphi[\vec{x}]$ and any local operator $j$ in $\CatE$, 
    we inductively define $\shjtrans{\varphi}$ as follows: 
    \[\begin{aligned}
        \shjtrans{(R[\vec{x}])} &\mycoloneqq \chi_{\jsh{\subinp{R}}} [\vec{r}]; \\
        \shjtrans{(\varphi\land\psi)} &\mycoloneqq \shjtrans{\varphi}\land \shjtrans{\psi}; \\
        \shjtrans{(\varphi\lor\psi)} &\mycoloneqq j(\shjtrans{\varphi}\lor \shjtrans{\psi}); \\
        \shjtrans{(\varphi\rightarrow\psi)} &\mycoloneqq \shjtrans{\varphi}\rightarrow \shjtrans{\psi}; \\
        \shjtrans{(\exists y:X. \varphi[\vec{x}, y])} &\mycoloneqq j(\exists s:\jsh{X}. \shjtrans{\varphi}[\vec{r}, s]); \\
        \shjtrans{(\forall y:X. \varphi[\vec{x}, y])} &\mycoloneqq \forall s:\jsh{X}. \shjtrans{\varphi}[\vec{r}, s], 
    \end{aligned}\]
    where $\chi_{\jsh{\subinp{R}}}\colon \jsh{X^{m}}\to \Omega$ denotes the characteristic map of $\jsh{\subinp{R}}\rightarrowtail \jsh{X^{m}}$.
    We call $\shjtrans{\varphi}$ the \emph{$\jshfunc$-translation of $\varphi$}:
    \[\varphi:\Omega \quad [\vec{x}:X] \quad\mapsto\quad \shjtrans{\varphi}:\Omega \quad [\vec{r}:\jsh{X}]. \qedhere\]
\end{definition}

The $\jshfunc$-translation provides a uniform way to transform $\langX$-formulas in $\CatE$ into $\mathcal{L}_{\jsh{X}}$-formulas in $\SubtoposjE$.
For instance, the $\jshfunc$-translation for specific atomic formulas or $\Arithl$-formulas is coherent with the structure of the associated subtopos, as shown below.

\begin{example}\label{ex:shjatomic}
    Let us consider an atomic $\langX$-formula $(f(\vec{x}) =_{X} g(\vec{x}))$, where $f, g\colon X^{m}\rightarrow X$ are morphisms in $\CatE$.
    Since the associated sheaf functor $\jshfunc$ preserves equalizers, 
    its $\jshfunc$-translation coincides with the equality between the morphisms $\jsh{f}, \jsh{g}\colon \jsh{X^{m}}\rightarrow \jsh{X}$:
    \[\shjtrans{(f(\vec{x}) =_{X} g(\vec{x}))} \myeqq (\jsh{f}(\vec{r}) =_{\jsh{X}} \jsh{g}(\vec{r})) \quad [\vec{r}:\jsh{X}].\]
\end{example}

\begin{example}\label{ex:shjarith}
    Let us consider an $\Arithl$-sentence $\varphi$ and its standard interpretation on a natural numbers object in $\CatE$.
    Since the associated sheaf functor $\jshfunc$ preserves the natural numbers object, 
    the $\jshfunc$-translation $\shjtrans{\varphi}$ corresponds to the standard interpretation of $\varphi$ in $\SubtoposjE$.
    That is, the following equivalence follows from the definitions: 
    \[\CatE\models \shjtrans{\varphi} \iff \SubtoposjE\models \varphi.\]
\end{example}

On the other hand, it can be shown that the $\jshfunc$-translation $\shjtrans{\varphi}$ and the $j$-translation $\extjtrans{\varphi}$ are equivalent in $\CatE$.
While this is probably a folklore among topos theorists, we have not found literature where the proof is explicitly written.
Therefore, we clarify this fact here.
The verification relies on the following lemmas.

\begin{lemma}\label{lem:pullback}
    Let $j$ be a local operator in $\CatE$, and let $\eta$ be the unit of the geometric inclusion $(\jshfunc \dashv {i}) \colon \SubtoposjE \hookrightarrow \CatE$.
    For an object $X$, let $\eta_{X}^{-1}$ denote the pullback functor along $\eta_{X}\colon X\to \jsh{X}$.
    Then the following hold.
    \begin{enumerate}
        \item For any subobject $A\rightarrowtail X$, $\eta_{X}^{-1} (\jsh{A})$ coincides with the $j$-closure $jA$ of $A$.
        \item There is a lattice isomorphism between the subobjects $\SubEj(\jsh{X})$ of $\jsh{X}$ in $\SubtoposjE$ 
        and the $j$-closed subobjects $\CljSubE(X)$ of $X$ in $\CatE$: 
        \[\eta_{X}^{-1} \colon \SubEj(\jsh{X}) \overset{{\cong}}{\longleftrightarrow} \CljSubE(X) \colon \jshfunc.\]
    \end{enumerate}
\end{lemma}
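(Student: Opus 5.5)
For (1), I will use the standard description of the associated sheaf functor through closure and separation. The plan is to reduce to the case where $X$ is replaced by its separated reflection and then pass to $j$-sheaves.

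The key input is the known fact (e.g. Johnstone's \emph{Elephant}, A4.3–A4.4) that for a monomorphism $m\colon A\rightarrowtail X$, the square formed by $m$, $\jsh{m}$ and the units $\eta_A$, $\eta_X$ factors as follows. First, $A\rightarrowtail jA$ is $j$-dense. Second, $jA\rightarrowtail X$ is $j$-closed. For a closed subobject $C\rightarrowtail X$, the square with $C\to \jsh{C}$ and $X\to\jsh{X}$ is a pullback. Granting this, I will proceed in three steps.
\begin{enumerate}
\item Since $\jshfunc$ preserves monos and sends dense monos to isomorphisms, $\jsh{A}\cong\jsh{(jA)}$ as subobjects of $\jsh{X}$.
\item Hence $\eta_X^{-1}(\jsh{A})=\eta_X^{-1}(\jsh{(jA)})$.
\item By the pullback property for closed subobjects, this equals $jA$.
\end{enumerate}

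The pullback property is the main obstacle. My approach is to use that the subobject classifier of $\SubtoposjE$ is $\Omega_j$, the equalizer of $j$ and $\mathrm{id}$, which is a $j$-sheaf. A closed subobject $C$ of $X$ has a characteristic map $\chi\colon X\to\Omega_j$, and this map factors uniquely through $\eta_X$ as some $\bar\chi\colon\jsh{X}\to\Omega_j$. Let $S\rightarrowtail\jsh{X}$ be the subobject classified by $\bar\chi$. Then $S$ is a sheaf, being a closed subobject of a sheaf, and $\eta_X^{-1}(S)=C$ by construction.

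It remains to show $S\cong\jsh{C}$. I will check that $S$ satisfies the universal property of $\jsh{C}$. Maps from $S$ into a sheaf $F$ correspond to maps from $C$ into $F$, because the restriction $\eta_X|_C\colon C\to S$ is a unit: it is the pullback of $\eta_X$, and pulling back preserves the dense-mono/epi structure of units, since $\eta_X$ has dense image and dense kernel pair. This verification is the delicate step. As a fallback I will simply cite it from the literature.

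For (2), both maps are monotone, and $\jshfunc$ of a closed subobject is indeed a subobject in $\SubtoposjE$. Part (1) gives $\eta_X^{-1}\jsh{C}=jC=C$ for closed $C$. Conversely, let $S\rightarrowtail\jsh{X}$ be a subobject in $\SubtoposjE$. Then $\eta_X^{-1}S$ is closed, being the pullback of a map classified in $\Omega_j$. Its characteristic map factors as $\chi_S\circ\eta_X$, so by uniqueness of factorization through $\eta_X$ we get $\jsh{(\eta_X^{-1}S)}=S$. Mutually inverse monotone maps are lattice isomorphisms, which completes the proof of (2).
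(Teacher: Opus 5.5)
Your proof is correct. It differs from the paper mainly in that the paper does not prove the lemma at all. It treats (1) as a standard fact from the literature and cites \cite[Proposition~2.3]{Caramello20denseness} for (2).

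You instead give a self-contained argument. Its engine is that $j$-closed subobjects of $X$ are exactly the maps $X\to\Omega_j$, and that $\Omega_j$ is a $j$-sheaf. The universal property of $\eta_X$ therefore makes $\eta_X^{-1}$ a bijection from closed (equivalently, sheaf) subobjects of $\jsh{X}$ onto $\CljSubE(X)$. Once you identify the inverse of this bijection with $\jshfunc$, part (2) is immediate. Part (1) then follows from the dense/closed factorization $A\rightarrowtail jA\rightarrowtail X$, since $\jshfunc$ inverts dense monos.

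Your route explains why the lemma holds: both parts rest on the single fact that $\eta_X^{-1}$ and $\jshfunc$ are mutually inverse on closed subobjects, which is just the universal property of the unit applied to $\Omega_j$. The paper's citation buys only brevity.

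Two small remarks. First, the step you flag as delicate is easier than your dense-image/dense-kernel-pair argument suggests. Apply the left exact functor $\jshfunc$ to the pullback square exhibiting $C=\eta_X^{-1}(S)$. Since $\jshfunc\eta_X$ is an isomorphism and $S\cong\jsh{S}$ ($S$ being a sheaf), this gives $\jsh{C}\cong S$ as subobjects of $\jsh{X}$ at once, with no fallback citation needed. Second, your opening plan to pass through the separated reflection is never used and can be deleted.
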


The former statement is a basic fact found in various literature. 
The latter isomorphism is proved in \cite[Proposition~2.3]{Caramello20denseness}.
In particular, for an atomic $\langX$-formula $R$, Lemma~\ref{lem:pullback} (1) immediately gives the following equality in $\CljSubE(X)$:
\[\eta_{X}^{-1} (\subinp{\shjtrans{R}}) = \eta_{X}^{-1} (\jsh{\subinp{R}}) = j\subinp{R} = \subinp{\extjtrans{R}}.\]
Furthermore, by combining Lemma~\ref{lem:pullback} (2) with basic calculations for the associated sheaf functor (for example, \cite[Theorem~5.1~(f)]{Hyland82} for universal quantification), 
we can prove the equality above holds for any $\langX$-formula $\varphi$.
This implies the following equivalence in the internal logic:

\begin{proposition}\label{prop:shvsext}
    For any $\langX$-formula $\varphi[\vec{x}]$ and any local operator $j\colon\Omega\to\Omega$ in $\CatE$, the following equivalence holds: 
    \[\CatE\models \forall \vec{x}:X. (\shjtrans{\varphi}[\eta_{X}(x_1), \dots, \eta_{X}(x_{m})] \leftrightarrow \extjtrans{\varphi}[\vec{x}] ),\]
    where $\eta_{X}\colon X \to \jsh{X}$ is the component of the unit. 
\end{proposition}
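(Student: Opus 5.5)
We argue by induction on the structure of $\varphi$, in the form of a stronger statement about subobjects. For a formula $\varphi[\vec{x}]$ with $m$ free variables, write $\eta = \eta_{X^{m}}\colon X^{m}\to \jsh{X^{m}}\cong (\jsh{X})^{m}$; this identification is legitimate because $\jshfunc$ preserves finite products. We claim that $\subinp{\shjtrans{\varphi}}$ is a subobject of $\jsh{X^{m}}$ in $\SubtoposjE$, and that
\[\eta^{-1}(\subinp{\shjtrans{\varphi}}) = \subinp{\extjtrans{\varphi}} \quad\text{in } \CljSubE(X^{m}).\]
This is exactly the asserted internal biconditional, since $\shjtrans{\varphi}[\eta_X(x_1),\dots,\eta_X(x_m)]$ classifies the pullback along $\eta$. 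The first half of the claim follows because every $\shjtrans{\varphi}$ is a $j$-closed formula over sheaves: atoms are sheaf subobjects, and the $j$-closed subobjects are closed under $\land$, $\rightarrow$, $\forall$, and under $j(\cdot\lor\cdot)$ and $j\exists$. Closedness of $\extjtrans{\varphi}$ holds for the same reasons, using Lemma~\ref{lem:loplem}(1),(2). The atomic case is the computation already displayed after Lemma~\ref{lem:pullback}, which uses part (1) of that lemma.

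For the propositional connectives we use Lemma~\ref{lem:pullback}(2). The maps $\eta^{-1}$ and $\jshfunc$ form a lattice isomorphism $\SubEj(\jsh{X^m})\cong\CljSubE(X^m)$. Both are Heyting algebras: $\SubEj$ is so because $\SubtoposjE$ is a topos, and $\CljSubE$ has meet $\land$, implication $\rightarrow$, and join $j(\cdot\lor\cdot)$. A lattice isomorphism between Heyting algebras automatically preserves $\rightarrow$, since implication is determined by the order. We also check that the connectives of $\SubEj$ coincide with those computed in $\CatE$ for sheaf subobjects. For meets and implications this holds because the inclusion $i$ preserves limits and exponentials. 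For joins, the join in $\SubtoposjE$ is the $j$-closure of the join in $\CatE$, which is exactly the clause $j(\shjtrans{\varphi}\lor\shjtrans{\psi})$. With these facts, the cases $\land,\lor,\rightarrow$ follow from the induction hypothesis.

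The quantifier cases are the real obstacle, because they change the context from $X^{m+1}$ to $X^{m}$. We work with the square formed by the projections $\pi\colon X^{m+1}\to X^m$ and $\jsh{\pi}$ together with the units, which commutes by naturality of $\eta$. For $\exists$, we want $\eta_{X^m}^{-1}\bigl(j\,\exists_{\jsh{\pi}} B\bigr) = j\,\exists_{\pi}\bigl(\eta_{X^{m+1}}^{-1}B\bigr)$ for a sheaf subobject $B$. Our plan is to apply $\jshfunc$ to both sides and invoke the isomorphism of Lemma~\ref{lem:pullback}(2). Since $\jshfunc$ preserves images and $\jsh{\pi}$ is the projection, $\jshfunc(\exists_\pi A)=\exists_{\jsh\pi}\jshfunc A$. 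Moreover $\jshfunc(\eta^{-1}B)=B$ by the lemma. Closing up with $j$ and using Lemma~\ref{lem:pullback}(1) then gives the equality. For $\forall$, $\exists_\pi$ is replaced by the right adjoint $\forall_\pi$, and no such preservation is available for free. Here we would cite \cite[Theorem~5.1~(f)]{Hyland82}, which gives $\eta_{X^m}^{-1}(\forall_{\jsh\pi} B)=\forall_\pi(\eta_{X^{m+1}}^{-1}B)$ for $j$-closed $B$.

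If a direct argument for the $\forall$ case is wanted, we would prove the inequality in each direction using the adjunction $\pi^{*}\dashv\forall_\pi$ on both sides. The inclusion from left to right follows from naturality of $\eta$ alone. For the converse inclusion, we would use Lemma~\ref{lem:pullback}(2) to transport $\forall_\pi$ of a closed subobject to $\SubEj$. That step needs $\forall_\pi$ of a $j$-closed subobject to be $j$-closed, which is Lemma~\ref{lem:loplem}(2). We expect this converse inclusion to be the main place where care is needed.
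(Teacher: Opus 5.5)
Your proof is correct and is essentially the argument the paper sketches just before the statement. That sketch pulls back along $\eta$, uses Lemma~\ref{lem:pullback}(1) for atoms, and handles the inductive steps with the isomorphism of Lemma~\ref{lem:pullback}(2) plus standard facts about $\jshfunc$, citing \cite[Theorem~5.1~(f)]{Hyland82} for $\forall$. Your optional direct treatment of the $\forall$ case also goes through, since $\pi^{*}\eta_{X^{m}}^{-1}=\eta_{X^{m+1}}^{-1}(\jsh{\pi})^{*}$ and $\eta^{-1}$ reflects the order on $j$-closed subobjects.

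The paper additionally writes out an alternative, purely internal-logic proof. It takes the singleton lop-frame $\{\,\transpose{j}\,\}_{P\Omega}$ and the translation $\LforceinP$ built from Lawvere's construction $\jsh{X}\rightarrowtail PX$, then applies Propositions~\ref{prop:LforceinPvsforceinP} and~\ref{prop:maximal}. In that route the external facts about $\jshfunc$ are replaced by internal calculations such as Lemma~\ref{lem:intunit}.
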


In fact, as we will see later in Section~\ref{subsec:internalKripkeframe}, 
this equivalence can also be proven directly using the internal logic (see the discussion following Proposition~\ref{prop:LforceinPvsforceinP}).

\section{A translation with an internal subposet of local operators}\label{sec:lopframe}

\subsection{Lop-frame}\label{subsec:lopframe}

There is a standard partial order on local operators.
It is well known that this order admits numerous characterizations, such as the reverse inclusion between the associated subtoposes.
What is important here is that the order can also be defined in the internal language.
We first introduce a formula expressing the standard order on local operators.

\begin{definition}\label{def:loporder}
    The formula $j\leq k$ with variables $j$, $k:P\Omega$ is defined by: 
    \[j\leq k\mycoloneqq \forall p:\Omega. (jp\rightarrow kp). \qedhere\]
\end{definition}

The formula $j\leq k$ defines an internal poset $(P\Omega, \leq)$ in $\CatE$, as well as an internal poset $(\LopE, \leq)$ of internal local operators.
Thus, each subobject $\PosetP$ of $\LopE$ induces an internal subposet of local operators: 
\[(\PosetP, \leq_{\PosetP}) \rightarrowtail (\LopE, \leq).\]

\begin{definition}\label{def:lopframe}
    Let $\PosetP$ denote a variable of type $P(P\Omega)$. 
    We say that $\PosetP$ is a \emph{lop-frame}\footnote{
        Since it is only essential for this concept to be a subposet of nuclei, it might be more appropriate to call it a \emph{nucleus-frame} depending on the context.
        We welcome any suggestions for a better terminology.
    } if the formula $(\PosetP\subLopE)$ defined in Preliminaries~\ref{subsec:notation} holds in $\CatE$.
    For any formula $\varphi[k]$ with a free variable $k:P\Omega$, we use the following abbreviation: 
    \[\forall k\geqinP{j}. \varphi[k] \mycoloneqq \forall k\in\PosetP. (j\leq k \rightarrow \varphi[k]). \qedhere\]
\end{definition}

\begin{remark}\label{rem:lopframe}
    Similarly to Remark~\ref{rem:POmegavslop}, 
    the formula $(\PosetP\subLopE)$ defines a closed term $\LopFE \mycoloneqq \{\, \PosetP: P(P\Omega) \mid \PosetP\subLopE \,\}$ 
    and induces an internal object $\LopFE$ of all lop-frames in $\CatE$.
    This object $\LopFE$ is isomorphic to the power object of $\LopE$: 
    \[\LopFE = \subinp{\PosetP\subLopE} \cong P(\LopE) \rightarrowtail P(P\Omega).\]
    Therefore, the universal quantification $\forall \PosetP\subLopE. \varphi[\PosetP]$ can be read as the universal quantification $\forall \PosetP:\LopFE. \varphi[\PosetP]$ over this object $\LopFE$.
    In Section~\ref{sec:realizability}, we will provides the explicit description of the objects $\LopE$ and $\LopFE$ in the effective topos $\Eff$.
\end{remark}

Each lop-frame gives rise to a translation that combines the $j$-translation with Kripke forcing.

\begin{definition}\label{def:forceinP}
    For any $\langX$-formula $\varphi [\vec{x}]$, 
    we inductively define $j\forceinP \varphi [\vec{x}]$ as follows: 
    \[\begin{aligned}
        j\forceinP (R[\vec{x}]) &\mycoloneqq jR[\vec{x}]; \\
        j\forceinP (\varphi\land \psi) &\mycoloneqq (j\forceinP \varphi) \land (j\forceinP \psi); \\
        j\forceinP (\varphi\lor \psi) &\mycoloneqq j((j\forceinP \varphi) \lor (j\forceinP \psi)); \\
        j\forceinP (\varphi\rightarrow \psi [\vec{x}]) &\mycoloneqq \\ 
         \forall k\geqinP{j}. &((k\forceinP \varphi [\vec{x}]) \rightarrow (k\forceinP \psi [\vec{x}])); \\
        j\forceinP (\exists y:X.\varphi[\vec{x}, y]) &\mycoloneqq j(\exists y:X. j\forceinP \varphi[\vec{x}, y]); \\
        j\forceinP (\forall y:X.\varphi[\vec{x}, y]) &\mycoloneqq \forall k \geqinP{j} \forall y:X. k\forceinP \varphi[\vec{x}, y].
    \end{aligned}\]
    \[\varphi:\Omega \quad[\vec{x}:X] \quad\mapsto\quad j\forceinP \varphi:\Omega \quad [\PosetP:P(P\Omega), j:P\Omega, \vec{x}:X]. \qedhere\]
\end{definition}

First, we observe that this translation generalizes the $j$-translation.
In fact, it is straightforward to verify that if $j$ is a maximal element in a lop-frame $\PosetP$ internally, 
the translation $j\forceinP\varphi$ coincides with the $j$-translation $\intjtrans{\varphi}[j]$.
\begin{proposition}\label{prop:maximal} 
    For any $\langX$-formula $\varphi[\vec{x}]$,
    \[\begin{aligned}
        \CatE\models \forall&\PosetP\subLopE \forall j\inLopE. \\
        &(\forall k\geqinP{j}. (j=_{P\Omega} k)) \rightarrow \forall \vec{x}:X. (j\forceinP \varphi [\vec{x}] \leftrightarrow \intjtrans{\varphi}[j, \vec{x}]).
    \end{aligned}\]
\end{proposition}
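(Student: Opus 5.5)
Since the claim is internal, I will argue in the internal language of $\CatE$ and proceed by induction on the structure of the $\langX$-formula $\varphi[\vec{x}]$. I fix $\PosetP$ with $\PosetP\subLopE$ and $j\inLopE$, and assume the maximality hypothesis $H \mycoloneqq \forall k\geqinP{j}. (j=_{P\Omega} k)$. The induction hypothesis is the equivalence $j\forceinP \psi[\vec{x}] \leftrightarrow \intjtrans{\psi}[j,\vec{x}]$ for all $\vec{x}$, for this fixed $j$, applied to the immediate subformulas $\psi$ of $\varphi$. Because the hypothesis $H$ concerns only $j$ itself, it suffices to prove the equivalence for this fixed $j$ rather than for all $k$. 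The clauses for $\forall$ and $\rightarrow$ are the only ones that mention other $k$, and there $H$ lets me replace $k$ by $j$ via equality in $P\Omega$.

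The atomic, $\land$, $\lor$ and $\exists$ cases are immediate. In each of them the clauses of Definition~\ref{def:forceinP} and Definition~\ref{def:intjtrans} have identical shape. Since $=_{P\Omega}$ is a congruence, it remains to rewrite under the operators $j(-)$, $\land$, $\lor$ and $\exists y$ using the induction hypothesis. For $\exists$, the hypothesis must be applied under the quantifier, which is legitimate because it holds for all $\vec{x}, y$.

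For $\varphi\rightarrow\psi$, I must show $\forall k\geqinP{j}. ((k\forceinP\varphi)\rightarrow(k\forceinP\psi)) \leftrightarrow (\intjtrans{\varphi}\rightarrow\intjtrans{\psi})$.
\begin{itemize}
\item[($\Rightarrow$)] Instantiate $k \mycoloneqq j$. This requires $j\in\PosetP$ and $j\leq j$.
\item[($\Leftarrow$)] Given $k\in\PosetP$ with $j\leq k$, hypothesis $H$ gives $k=j$. Substituting $j$ for $k$ reduces the claim to the induction hypothesis.
\end{itemize}
The $\forall$ case is identical: for ($\Rightarrow$) instantiate $k\mycoloneqq j$, and for ($\Leftarrow$) collapse $k$ to $j$ via $H$ and apply the induction hypothesis under $\forall y$.

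The main obstacle is the instantiation $k\mycoloneqq j$, which needs $j\in\PosetP$. The statement as written only assumes $j\inLopE$, and the maximality formula is vacuous if $j\notin\PosetP$. I would therefore read the intended hypothesis as $j\in\PosetP$ and treat it as implicit in ``$j$ is a maximal element in $\PosetP$'', as the preceding text says. The alternative is to add the conjunct $j\in\PosetP$ to the antecedent, and I would make this explicit in the write-up. Without it the ($\Rightarrow$) direction can fail. For example, take $\PosetP$ empty: then $j\forceinP(\varphi\rightarrow\psi)$ is trivially true while $\intjtrans{\varphi}\rightarrow\intjtrans{\psi}$ need not be. Reflexivity $j\leq j$ is trivial. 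Otherwise the proof is a routine induction carried out entirely in intuitionistic higher-order logic, so it holds in $\CatE$ by soundness of the internal language.
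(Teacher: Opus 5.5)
Your proof is correct and is the routine induction on $\varphi$ that the paper has in mind; the paper gives no written proof and only calls the claim ``straightforward to verify''. Your observation about the hypothesis is also right. As literally displayed, the statement fails for the empty lop-frame $\PosetP$, since then the maximality antecedent and $j\forceinP(\varphi\rightarrow\psi)$ hold vacuously. The intended hypothesis $j\in\PosetP$ is what the paper's phrase ``maximal element in a lop-frame $\PosetP$'' means, and it holds in every application in the paper: the singleton frames $\{\transpose{j}\}$ and maximal elements $j\in S$ of standard lop-frames.
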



We also verify that $j$-closedness, a fundamental property of $j$-translations, holds for our translation as well.
\begin{lemma}[$j$-closedness]\label{lem:jclosed}
    For any $\langX$-formula $\varphi[\vec{x}]$, 
    \[\CatE\models \forall \PosetP\subLopE \forall j\inLopE \forall x:X. (j(j\forceinP\varphi[\vec{x}]) \leftrightarrow j\forceinP\varphi[\vec{x}] ).\]
\end{lemma}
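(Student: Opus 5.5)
The plan is an induction on the structure of the $\langX$-formula $\varphi$, reasoning entirely in the internal logic of $\CatE$. We fix $\PosetP\subLopE$, $j\inLopE$ and $\vec{x}:X$. The direction $j\forceinP\varphi \rightarrow j(j\forceinP\varphi)$ is immediate from the first clause of $\isLopE[j]$, namely $p\rightarrow jp$. So only $j(j\forceinP\varphi)\rightarrow j\forceinP\varphi$ needs work. Note that the statement quantifies over $j\inLopE$ and not over $j\in\PosetP$. This causes no trouble, because in the clauses for $\rightarrow$ and $\forall$ we only use that the elements $k\geqinP{j}$ lie in $\PosetP\subLopE$, hence are local operators.

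First we handle the cases where $j\forceinP\varphi$ has the form $jq$ for some $q$: atomic formulas, disjunctions and existentials. There the claim is exactly the idempotence clause $j(jq)\rightarrow jq$ of $\isLopE[j]$. For conjunction we use that $j$ preserves binary meets, which follows from monotonicity, i.e.\ clause (3) of $\isLopE$ applied to the projections: $j(a\land b)\rightarrow ja\land jb$. Combined with the induction hypotheses $j(j\forceinP\varphi)\rightarrow j\forceinP\varphi$ and the same for $\psi$, this gives the case.

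The two interesting cases are implication and universal quantification, since their translations are not visibly of the form $jq$ and involve the varying $k\geqinP{j}$. For $\varphi\rightarrow\psi$, assume $j(\forall k\geqinP{j}.((k\forceinP\varphi)\rightarrow(k\forceinP\psi)))$, and fix $k\in\PosetP$ with $j\leq k$ and $k\forceinP\varphi$. By monotonicity of $j$ applied to the instantiation at this $k$, we get $j((k\forceinP\varphi)\rightarrow(k\forceinP\psi))$, hence $j(k\forceinP\psi)$ using $k\forceinP\varphi$ and $j(p\rightarrow q)\land p\rightarrow jq$ (a consequence of monotonicity and $p\rightarrow jp$, since $j(p\rightarrow q)\land jp\rightarrow j((p\rightarrow q)\land p)$). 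Next, $j\leq k$ yields $k(k\forceinP\psi)$. The induction hypothesis for $\psi$, applied at the local operator $k$ (legitimate because the hypothesis is stated for all $k\inLopE$), gives $k\forceinP\psi$. The $\forall$ case is analogous: from $j(\forall k\geqinP{j}\forall y. k\forceinP\varphi)$ and a fixed $k\geqinP{j}$ and $y$, monotonicity gives $j(k\forceinP\varphi[\vec{x},y])$, then $k(k\forceinP\varphi)$, then $k\forceinP\varphi$ by the induction hypothesis at $k$.

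The main obstacle is precisely this transfer from $j$ to $k$. It forces the induction hypothesis to be stated uniformly in the local operator, which is why the statement quantifies over all $j\inLopE$, and it uses the ordering $j\leq k$ in the essential way described above. Everything else reduces to the nucleus axioms of $\isLopE$ together with the elementary facts listed in Lemma~\ref{lem:loplem}.
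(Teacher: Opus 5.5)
Your proposal is correct and follows essentially the same route as the paper: induction on $\varphi$, where the implication and universal cases are the only nontrivial ones. In those cases you push $j$ inside the quantifier $\forall k\geqinP{j}$ (the content of Lemma~\ref{lem:loplem}~(2)), pass from $j$ to $k$ using $j\leq k$, and close with the induction hypothesis at $k$ (which the paper packages via Lemma~\ref{lem:loplem}~(1)).
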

\begin{proof}
    One can show this by induction on the complexity of $\varphi$.
    In particular, for the steps of implication $\rightarrow$ and universal quantification $\forall$,
    apply Lemma~\ref{lem:loplem} (1) and (2).
\end{proof}

\subsection{Internal Kripke frame}\label{subsec:internalKripkeframe}

In Section~\ref{subsec:shjtrans}, we observed that the relationship between the $\jshfunc$-translation $\shjtrans{\varphi}$ and the $j$-translation $\extjtrans{\varphi}$.
In fact, we can define a variant $j\LforceinP\varphi$ of our translation $j\forceinP\varphi$ that corresponds to the $\jshfunc$-translation.
This variant gives us a semantic intuition that our translation describes an internal Kripke frame in $\CatE$.
Although its practical benefit may not be apparent due to the equivalence of the translations, we discuss this variant in detail to prepare for future extensions.

To define this, let us recall the internal construction of the associated sheaf functor $\jshfunc\colon \CatE \to \SubtoposjE$ found in numerous literature \cite{JhonstoneTopostheory, SGL, Veit81}. 
This is referred to as \emph{Lawvere's construction} in \cite[Excersise~3.4]{JhonstoneTopostheory}. 
See the introduction of \cite{Johnstone74} for the history of this construction.
The key idea is to define the $j$-sheafification $\jsh{X}$ of an object $X$ as a subobject of the power object $PX$.
\begin{definition}\label{def:internalsheaf}
    We define the following $\langE$-terms: 
    \[\begin{aligned}
        \singletonX{x} &\mycoloneqq \{\, y:X \mid (y =_{X} x)\,\}:PX \quad [x:X] \\
        \intunitX{j}{u} &\mycoloneqq \{\, x:X \mid j(x \in_{X} u)\,\}:PX \quad [j:P\Omega, u:PX] \\
        \intdefofLX{j}{u} &\mycoloneqq j( \exists x:X. u=_{PX} \intunitX{j}{\singletonX{x}} ):\Omega \quad [j:P\Omega, u:PX] \\
        \intLX{j} &\mycoloneqq \{\, u:PX \mid \intdefofLX{j}{u} \,\}:P(PX) \quad [j:P\Omega] 
    \end{aligned}\]
\end{definition}

Then, for a local operator $j\colon \Omega\to\Omega$ in $\CatE$, 
the $j$-sheafification $\jsh{X}$ is given by the subobject $\intLX{\transpose{j}} = \subinp{\intdefofLX{\transpose{j}}{\cdot}} \rightarrowtail PX$.
Moreover, the component $\eta_{X}\colon X \to \jsh{X}$ of the unit of the geometric inclusion $\SubtoposjE \hookrightarrow \CatE$ is the unique morphism 
such that the following diagram commute (See, for example, \cite{Veit81}):
\begin{equation}\label{eq:shfunc}
    \begin{tikzcd}
        X \arrow[d, rightarrowtail, "\singletonX{\cdot}"'] \arrow[r, "\eta_{X}"] & 
        \jsh{X} \arrow[d, rightarrowtail] \arrow[r, "{!}"] & 
        1 \arrow[d, rightarrowtail, "\truem"] \\ 
        PX \arrow[r, "{\intunitX{\transpose{j}}{\cdot}}"'] & 
        PX \arrow[r, "{\intdefofLX{\transpose{j}}{\cdot}}"'] & 
        \Omega.
    \end{tikzcd}
\end{equation}

Furthermore, from standard calculations of internal logic, the following lemmas on the term $\eta$ are verified.
\begin{lemma}\label{lem:intunit}
    The following hold:
    \begin{enumerate}
        \item $\CatE\models \forall j, k\inLopE. (j\leq k \rightarrow \forall u\in\intLX{j}. \intunitX{k}{u} \in_{PX} \intLX{k}).$
        \item $\CatE\models \forall j, k\inLopE. (j\leq k \rightarrow \forall u:PX. \intunitX{k}{\intunitX{j}{u}} =_{PX} \intunitX{k}{u}).$
    \end{enumerate}
\end{lemma}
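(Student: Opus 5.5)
We argue in the internal logic, fixing $j,k\inLopE$ with $j\leq k$, and prove (2) first, since (1) will follow from it.

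\textbf{Part (2).} Fix $u:PX$. By extensionality it suffices to show, for every $x:X$,
\[k\bigl(x\in\intunitX{j}{u}\bigr)\leftrightarrow k(x\in u).\]
By the definition of $\intunitX{j}{u}$, the left side is $k(j(x\in u))$. So the claim reduces to the identity $k(jp)\leftrightarrow kp$ for all $p:\Omega$.
\begin{itemize}
\item For ``$\leftarrow$'', use $p\rightarrow jp$ and monotonicity of $k$, which is the third clause of $\isLopE[k]$.
\item For ``$\rightarrow$'', the hypothesis $j\leq k$ gives $jp\rightarrow kp$. Monotonicity then gives $k(jp)\rightarrow k(kp)$, and idempotence gives $k(kp)\rightarrow kp$.
\end{itemize}
This part is routine.

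\textbf{Part (1).} Let $u\in\intLX{j}$, i.e. $j(\exists x:X.\,u=\intunitX{j}{\singletonX{x}})$. We must show
\[k(\exists x:X.\,\intunitX{k}{u}=\intunitX{k}{\singletonX{x}}).\]
Write $\theta[v]\mycoloneqq \exists x:X.\,\intunitX{k}{v}=\intunitX{k}{\singletonX{x}}$ for $v:PX$.

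First we prove $\exists x:X.\,u=\intunitX{j}{\singletonX{x}}\rightarrow \theta[u]$. Given such an $x$, we have $\intunitX{k}{u}=\intunitX{k}{\intunitX{j}{\singletonX{x}}}$. By Part (2) this equals $\intunitX{k}{\singletonX{x}}$, so the same $x$ witnesses $\theta[u]$.

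Next, monotonicity of $j$ yields $j(\exists x.\ldots)\rightarrow j\theta[u]$. Since $j\leq k$, this gives $k\theta[u]$, which is exactly $\intdefofLX{k}{\intunitX{k}{u}}$.

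The main subtlety is this last passage from $j$ to $k$ under the outer modality. It is handled by applying $j\leq k$ at the proposition $\theta[u]$, so no extra closure argument is needed.
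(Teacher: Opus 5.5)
Your proof is correct. The paper gives no written proof of this lemma; it only says it follows from ``standard calculations of internal logic,'' and your argument is exactly that calculation. You reduce (2) by extensionality to $k(jp)\leftrightarrow kp$, then obtain (1) from (2) at $u=\{x\}_X$, using monotonicity of $j$ and $j\leq k$ applied at $\theta[u]$.

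The one step you leave implicit is harmless. Passing from $k(x\in\intunitX{j}{u})$ to $k(j(x\in u))$ needs $k$ to respect the comprehension equivalence, and this follows from propositional extensionality in $\Omega$, or from the monotonicity clause of $\isLopE[k]$ applied in both directions.
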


In particular, Lemma~\ref{lem:intunit} (1) implies that for any local operators $j$, $k\colon \Omega\to\Omega$ with $j\leq k$, 
the unit $\eta_{X}^{j, k}\colon L_{j}X \to L_{k}X$ of the geometric inclusion $\CatE_{k} \hookrightarrow \CatE_{j}$ is induced by the term $\intunitX{\transpose{k}}{\cdot} \colon PX \to PX$ as follows:
\begin{equation*}
    \begin{tikzcd}
        L_{j}X \arrow[d, rightarrowtail] \arrow[r, "\eta_{X}^{j, k}"] &
        L_{k}X \arrow[d, rightarrowtail] \\
        PX \arrow[r, "{\intunitX{\transpose{k}}{\cdot}}"'] &
        PX.
    \end{tikzcd}
\end{equation*}

This allows us to describe all information on the $j$-sheafification $\jsh{X}$ and the unit $\eta_{X}^{j, k}\colon L_{j}X \to L_{k}X$ by embedding them into $PX$.
Therefore, given a lop-frame $\PosetP$, we can define a formula that intuitively represents an internal Kripke frame.
In this Kripke frame, each possible world is $\jsh{X}$ for $j\in\PosetP$, the restriction maps are the units $\eta_{X}^{j, k}\colon L_{j}X \to L_{k}X$, and the logical connectives and the quantifiers are interpreted in $\SubtoposjE$: 
\[(\PosetP, \, \leq_{\PosetP}, \, \{\, \jsh{X} \mid j\in\PosetP \,\}, \, \{\, \eta_{X}^{j, k}\colon L_{j}X \to L_{k}X \mid j\leq k \,\}).\]

\begin{definition}\label{def:LforceinP}
    For any $\langX$-formula $\varphi [\vec{x}]$, 
    we inductively define $j\LforceinP \varphi [\vec{u}]$ as follows: 
    \[\begin{aligned}
        j\LforceinP (R[\vec{x}]) &\mycoloneqq \forall \vec{x}:X. ((\bigwedge_{1\leq i\leq m}(x_{i}\in_{X} u_{i})) \rightarrow jR[\vec{x}]); \\
        j\LforceinP (\varphi\land \psi) &\mycoloneqq (j\LforceinP \varphi) \land (j\LforceinP \psi); \\
        j\LforceinP (\varphi\lor \psi) &\mycoloneqq j((j\LforceinP \varphi) \lor (j\LforceinP \psi)); \\
        j\LforceinP (\varphi\rightarrow \psi [\vec{x}]) &\mycoloneqq \\ 
         \forall k\geqinP{j}. &((k\LforceinP \varphi [\intunitX{k}{\vec{u}}] ) \rightarrow (k\LforceinP \psi [\intunitX{k}{\vec{u}}] )); \\
        j\LforceinP (\exists y:X.\varphi[\vec{x}, y]) &\mycoloneqq j(\exists v\in\intLX{j}. j\LforceinP \varphi[\vec{u}, v]); \\
        j\LforceinP (\forall y:X.\varphi[\vec{x}, y]) &\mycoloneqq \forall k \geqinP{j} \forall v\in\intLX{k}. k\LforceinP \varphi[\intunitX{k}{\vec{u}}, v],
    \end{aligned}\]
    where $[\intunitX{k}{\vec{u}}]$ denotes $[\intunitX{k}{u_1}, \dots, \intunitX{k}{u_{m}} ]$:
    \[\varphi:\Omega \quad[\vec{x}:X] \quad\mapsto\quad j\LforceinP \varphi:\Omega \quad [\PosetP:P(P\Omega), j:P\Omega, \vec{u}:PX]. \qedhere\]
\end{definition}

\vspace{\baselineskip} 

As expected, this translation is equivalent to our original translation $j\forceinP\varphi$.
\begin{proposition}\label{prop:LforceinPvsforceinP} 
    For any $\langX$-formula $\varphi [\vec{x}]$, the following equivalence holds: 
    \[\begin{aligned}
        \CatE\models \forall &\PosetP\subLopE \forall j\inLopE \\
        &\forall \vec{x}:X.  ( j\LforceinP\varphi[\intunitX{j}{\singletonX{x_{1}}}, \dots, \intunitX{j}{\singletonX{x_{m}}}] \leftrightarrow j\forceinP\varphi[\vec{x}] ).
    \end{aligned}\]
\end{proposition}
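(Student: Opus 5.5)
The proof goes by induction on the complexity of $\varphi$, inside the internal logic, for arbitrary $\PosetP\subLopE$, $j\inLopE$ and $\vec{x}:X$. One auxiliary fact is needed first, an analogue of Lemma~\ref{lem:jclosed} for the variant translation:
\[\CatE\models \forall \PosetP\subLopE \forall j\inLopE \forall \vec{u}:PX. \bigl(j(j\LforceinP\varphi[\vec{u}]) \leftrightarrow j\LforceinP\varphi[\vec{u}]\bigr).\]
This holds for arbitrary $\vec{u}:PX$, not only for those of the form $\intunitX{j}{\singletonX{x}}$. It is proved by a separate induction. The atomic, implication and universal clauses are universally quantified implications whose consequents are $j$-closed (for the atomic clause, $jR$ is $j$-closed; for implication and $\forall$, the consequent is $k$-closed by the induction hypothesis with $j\le k$). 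These cases therefore follow from Lemma~\ref{lem:loplem} (1), (2). The clauses for $\lor$ and $\exists$ begin with $j$, and $\land$ is immediate.

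\textbf{Atomic case.} The left-hand side is
\[\forall \vec{x}':X. \Bigl(\bigwedge_i x'_i\in\intunitX{j}{\singletonX{x_i}} \rightarrow jR[\vec{x}']\Bigr).\]
By definition, $x'_i\in\intunitX{j}{\singletonX{x_i}}$ means $j(x'_i=x_i)$. For ($\rightarrow$), instantiate $\vec{x}':=\vec{x}$, using $p\rightarrow jp$. For ($\leftarrow$), from $j(\bigwedge_i x'_i=x_i)$ (since $j$ preserves finite meets) and $jR[\vec{x}]$ we get $j(R[\vec{x}'])$. This uses monotonicity of $j$ applied to $(\bigwedge_i x_i'=x_i)\land R[\vec x]\rightarrow R[\vec x']$, together with $jj\le j$.

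\textbf{Connectives and $\exists$.} The cases $\land$ and $\lor$ are immediate from the induction hypothesis. For implication, Lemma~\ref{lem:intunit} (2) gives $\intunitX{k}{\intunitX{j}{\singletonX{x}}}=\intunitX{k}{\singletonX{x}}$ for $j\le k$. So the left-hand side becomes
\[\forall k\geqinP{j}\bigl(k\LforceinP\varphi[\intunitX{k}{\singletonX{\vec x}}]\rightarrow k\LforceinP\psi[\intunitX{k}{\singletonX{\vec x}}]\bigr),\]
and the induction hypothesis at $k$ finishes the case.

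For $\exists$, the direction ($\leftarrow$) uses the witness $v:=\intunitX{j}{\singletonX{y}}$, which lies in $\intLX{j}$. For ($\rightarrow$), membership $v\in\intLX{j}$ gives $j(\exists y. v=\intunitX{j}{\singletonX{y}})$. Under $j$ we substitute and apply the induction hypothesis, obtaining $j(j(\exists y.\, j\forceinP\varphi[\vec x,y]))$, which collapses by $jj\le j$.

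\textbf{Universal case (expected main obstacle).} Using Lemma~\ref{lem:intunit} (2) again, the left-hand side is
\[\forall k\geqinP{j}\,\forall v\in\intLX{k}.\; k\LforceinP\varphi[\intunitX{k}{\singletonX{\vec x}},v].\]
Direction ($\rightarrow$) instantiates $v:=\intunitX{k}{\singletonX{y}}$ and applies the induction hypothesis.

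Direction ($\leftarrow$) is the delicate one, because an arbitrary $v\in\intLX{k}$ is only $k$-locally of the form $\intunitX{k}{\singletonX{y}}$. Since $k(\exists y.\, v=\intunitX{k}{\singletonX{y}})$ holds, the induction hypothesis together with the assumption yields
\[k\bigl(k\LforceinP\varphi[\intunitX{k}{\singletonX{\vec x}},v]\bigr).\]
We then strip the outer $k$ using the auxiliary closedness lemma above, applied at $k$ with the general argument list $(\intunitX{k}{\singletonX{\vec x}},v)$. This is exactly why that lemma has to be stated for arbitrary $\vec{u}:PX$.
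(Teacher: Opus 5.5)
Your proposal is correct and follows essentially the same route as the paper. Both argue by induction on $\varphi$, use Lemma~\ref{lem:intunit}~(2) for the implication and universal clauses, use the witness argument with $jj\le j$ for the existential clause, and in the hard direction of the universal case strip the outer $k$ using the closedness of $j\LforceinP\varphi$, which the paper invokes ``similarly to Lemma~\ref{lem:jclosed}''. Your remark that this closedness must hold for arbitrary arguments $\vec{u}:PX$, and your detailed atomic case, spell out what the paper leaves implicit.
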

\begin{proof}
    By induction on the complexity of $\varphi$.
    The atomic case follows from Lemma~\ref{lem:loplem}.
    The induction steps for $\land$ and $\lor$ is trivial.
    Let us show the step for existential quantification $\exists y:X. \varphi[x, y]$.
    Using Lemma~\ref{lem:loplem} (4), we have the following equivalence: 
   \[\begin{aligned}
        &j\forceinP(\exists y:X. \varphi) [ \intunitX{j}{\singletonX{x}} ] \\
        = &j(\exists v:PX. j( \exists y:X. v=_{PX} \intunitX{j}{\singletonX{y}} ) \land j\LforceinP \varphi[\intunitX{j}{\singletonX{x}}, v]) \\
        \leftrightarrow &j(\exists v:PX. ( \exists y:X. v=_{PX} \intunitX{j}{\singletonX{y}} ) \land j\LforceinP \varphi[\intunitX{j}{\singletonX{x}}, v] ) \\
        \leftrightarrow &j(\exists y:X. j\LforceinP \varphi[\intunitX{j}{\singletonX{x}}, \intunitX{j}{\singletonX{y}}] ).
   \end{aligned}\]
   By the induction hypothesis, the last formula is equivalent to $j\forceinP (\exists y:X. \varphi[x, y])$.
    
   Next, consider the step for universal quantification $\forall y:X. \varphi[x, y]$.
   Note that $j\LforceinP\varphi$ is $j$-closed similarly to Lemma~\ref{lem:jclosed}.
   Then, by Lemma~\ref{lem:intunit} (2) and the induction hypothesis, 
   we obtain the following equivalence:
   \[\begin{aligned}
        &j\LforceinP(\forall y:X. \varphi) [ \intunitX{j}{\singletonX{x}} ] \\
        = &\forall k\geqinP{j} \forall v\in\intLX{k}. k\LforceinP\varphi [ \intunitX{k}{\intunitX{j}{\singletonX{x}}}, v ] \\
        \leftrightarrow &\forall k\geqinP{j} \forall y:X. k\LforceinP\varphi [\intunitX{k}{\singletonX{x}}, \intunitX{k}{\singletonX{y}}] \\
        \leftrightarrow &\forall k\geqinP{j} \forall y:X. k\LforceinP\varphi[x, y] \quad = \quad j\forceinP(\forall y:X. \varphi) [x]. 
   \end{aligned}\] 
   The step for implication $\rightarrow$ can be shown similarly.
\end{proof}

Using this proposition, we can give an alternative proof of Proposition~\ref{prop:shvsext} based on the internal logic.
\begin{proof}[proof of Proposition~\ref{prop:shvsext}]
    Let $j\colon \Omega \to \Omega$ be a local operator, and consider the singleton lop-frame $\PosetP \mycoloneqq \{\, \transpose{j} \,\}_{P\Omega}$.
    Similarly to Proposition~\ref{prop:maximal}, we can verify the following equivalence by induction on the complexity of $\varphi$:
    \[\forall \vec{r}:\jsh{X}. ( \shjtrans{\varphi}[\vec{r}] \leftrightarrow j\LforceinP\varphi [m(r_{1}), \dots, m(r_{m})] ), \] 
    where $m\colon \jsh{X} \rightarrowtail PX$ (for the atomic case, note Lemma~\ref{lem:pullback} (1)).
    Letting ${r}_{i} \mycoloneqq \eta_{X}({x}_{i})$, we have $m(\eta_{X}(x_{i})) = \intunitX{\transpose{j}}{\singletonX{x_{i}}}$.
    Then, by Proposition~\ref{prop:LforceinPvsforceinP}, we obtain: 
    \[\begin{aligned}
        \shjtrans{\varphi}[\eta_{X}(x_{1}), \dots, \eta_{X}(x_{m})] 
        &\leftrightarrow \transpose{j}\LforceinP\varphi [m(\eta_{X}(x_{1})), \dots, m(\eta_{X}(x_{m}))] \\
        &\leftrightarrow \transpose{j}\LforceinP\varphi [\intunitX{\transpose{j}}{\singletonX{x_{1}}}, \dots, \intunitX{\transpose{j}}{\singletonX{x_{m}}}] \\
        &\leftrightarrow \transpose{j}\forceinP\varphi[\vec{x}].
    \end{aligned}\]
    By Proposition~\ref{prop:maximal}, the last formula is equivalent to $\extjtrans{\varphi}[\vec{x}]$.
\end{proof}

\subsection{Soundness}\label{subsec:soundnessIQC}
We now establish the soundness theorem of our translation for $\IQC$.
First, we observe that our translation satisfies monotonicity, a fundamental property of Kripke models for intuitionistic logic.
It is worth noting that the results in this section do not require the assumption $j\inPosetP$, except for Corollary~\ref{cor:jinPmonotonicity} and \ref{cor:jinPconstantdomain}.

\begin{lemma}[Monotonicity]\label{lem:monotonicity}
    For any $\langX$-formula $\varphi[\vec{x}]$, 
    \[\begin{aligned}
        \CatE\models \forall&\PosetP\subLopE \forall j\inLopE \forall k\geqinP{j} \\
        &\forall \vec{x}:X. (j\forceinP\varphi[\vec{x}] \rightarrow k\forceinP\varphi[\vec{x}]).
    \end{aligned}\]
\end{lemma}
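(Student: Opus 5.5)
We argue by induction on the complexity of $\varphi$, working entirely in the internal logic of $\CatE$. Fix $\PosetP\subLopE$, $j\inLopE$, $k\inPosetP$ with $j\leq k$, and $\vec{x}:X$. Note that $k$ is then itself a local operator, since $\PosetP\subLopE$. The statement is taken simultaneously for all $j$ and $k$, so the induction hypothesis is available at every pair of operators. We use only transitivity of $\leq$ and the nucleus properties; we never need $j\inPosetP$.

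\emph{Atomic case.} Here $jR[\vec{x}]\rightarrow kR[\vec{x}]$ is immediate from the definition of $j\leq k$ (Definition~\ref{def:loporder}), instantiated at $p\mycoloneqq R[\vec{x}]$. The case of $\land$ follows directly from the induction hypothesis.

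\emph{Cases $\lor$ and $\exists$.} For $\lor$, the induction hypothesis gives
\[(j\forceinP\varphi)\lor(j\forceinP\psi)\rightarrow (k\forceinP\varphi)\lor(k\forceinP\psi).\]
Applying monotonicity of $j$ (the third clause of $\isLopE$) yields
\[j((j\forceinP\varphi)\lor(j\forceinP\psi))\rightarrow j((k\forceinP\varphi)\lor(k\forceinP\psi)).\]
Then $j\leq k$ turns the outer $j$ into $k$. The $\exists$ case is identical: push the induction hypothesis under $\exists y$, then under $j$, then use $j\leq k$.

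\emph{Cases $\rightarrow$ and $\forall$.} These are where the Kripke clauses pay off, and they reduce to transitivity of $\leq$. Suppose $j\forceinP(\varphi\rightarrow\psi)$ holds and let $l\inPosetP$ with $k\leq l$. Then $j\leq l$ by transitivity, because $jp\rightarrow kp\rightarrow lp$. The hypothesis therefore applies at $l$ and gives $l\forceinP\varphi\rightarrow l\forceinP\psi$, which is exactly what $k\forceinP(\varphi\rightarrow\psi)$ requires. The $\forall$ case is the same argument with $\forall y$ added. Neither case even uses the induction hypothesis.

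\emph{Main obstacle.} The only point needing care is bookkeeping: the bounded quantifier $\forall l\geqinP{k}$ must be unfolded as $\forall l\in\PosetP.(k\leq l\rightarrow\cdots)$, and $\leq$ must be checked to be transitive internally on $P\Omega$. Transitivity holds trivially for arbitrary $P\Omega$-terms. Apart from that, the induction is routine.
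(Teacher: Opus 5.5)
Your proof is correct and takes the same approach as the paper, which states only that the claim follows by induction on the complexity of $\varphi$. Your case analysis supplies the routine details: the atomic case via $j\leq k$, the $\lor$ and $\exists$ cases via monotonicity of $j$ followed by $j\leq k$, and the $\rightarrow$ and $\forall$ cases via transitivity of $\leq$.
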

\begin{proof}
    This claim is easily verified by induction on the complexity of $\varphi$.
\end{proof}

Furthermore, if $j\inPosetP$, monotonicity can be expressed by the following equivalence.
\begin{corollary}\label{cor:jinPmonotonicity} 
    For any $\langX$-formula $\varphi[\vec{x}]$, 
    \[\begin{aligned}
        \CatE\models \forall&\PosetP\subLopE \forall j\inPosetP \\
        &\forall \vec{x}:X. (j\forceinP\varphi[\vec{x}] \leftrightarrow (\forall k\geqinP{j}. k\forceinP\varphi[\vec{x}])).
    \end{aligned}\]
\end{corollary}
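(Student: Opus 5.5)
The plan is to derive both directions of the biconditional directly from Lemma~\ref{lem:monotonicity}, using reflexivity of the order $\leq$ on $P\Omega$. We argue internally in $\CatE$. Fix a lop-frame $\PosetP\subLopE$, an element $j\inPosetP$, and $\vec{x}:X$. Since $\PosetP\subLopE$, we have $j\inLopE$, so Lemma~\ref{lem:monotonicity} applies to $j$.

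For the forward direction, assume $j\forceinP\varphi[\vec{x}]$. Let $k\in\PosetP$ with $j\leq k$. Lemma~\ref{lem:monotonicity} then gives $k\forceinP\varphi[\vec{x}]$. Since $k$ was arbitrary, we obtain $\forall k\geqinP{j}. k\forceinP\varphi[\vec{x}]$.

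For the backward direction, assume $\forall k\geqinP{j}. k\forceinP\varphi[\vec{x}]$ and instantiate it at $k\mycoloneqq j$. This is legitimate for two reasons:
\begin{itemize}
    \item $j\in\PosetP$ holds by hypothesis;
    \item $j\leq j$ holds, because $\forall p:\Omega. (jp\rightarrow jp)$ is an intuitionistic tautology.
\end{itemize}
Hence $j\forceinP\varphi[\vec{x}]$.

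There is essentially no obstacle here. The only point to be careful about is that the backward direction is exactly where the extra hypothesis $j\inPosetP$ is used, which is why this statement is a corollary while Lemma~\ref{lem:monotonicity} does not need $j\inPosetP$. Everything else is pure intuitionistic higher-order logic, so the argument is valid in the internal language and yields the displayed judgement $\CatE\models\cdots$.
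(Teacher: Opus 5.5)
Your proof is correct and is essentially the argument the paper leaves implicit: the forward direction is Lemma~\ref{lem:monotonicity} (applicable since $\PosetP\subLopE$ gives $j\inLopE$), and the backward direction instantiates at $k\mycoloneqq j$ using $j\inPosetP$ and reflexivity of $\leq$. You also correctly note that the backward direction is exactly where $j\inPosetP$ is needed.
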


This corollary implies that our translation behaves analogously to Kripke frame with constant domain. 
\begin{corollary}\label{cor:jinPconstantdomain} 
    For any $\langX$-formula $\varphi[\vec{x}]$, the following equivalence holds:
    \[\begin{aligned}
        \CatE\models \forall&\PosetP\subLopE \forall j\inPosetP. \\
        &(\forall \vec{x}:X. j\forceinP \varphi[\vec{x}]) \leftrightarrow j\forceinP (\forall \vec{x}:X. \varphi).
    \end{aligned}\]
\end{corollary}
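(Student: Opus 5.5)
We argue internally in $\CatE$: fix $\PosetP\subLopE$ and $j\inPosetP$, and work by induction on the length of the tuple $\vec{x}$. The case of a single variable carries all the content. There, $j\forceinP(\forall x:X.\varphi)$ is by Definition~\ref{def:forceinP} the formula $\forall k\geqinP{j}\forall x:X.\, k\forceinP\varphi[x]$. Swapping the two universal quantifiers, which is plain intuitionistic logic, rewrites this as $\forall x:X\,\forall k\geqinP{j}.\, k\forceinP\varphi[x]$.

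By Corollary~\ref{cor:jinPmonotonicity}, applied pointwise in $x$ and using $j\inPosetP$, the inner formula $\forall k\geqinP{j}.\,k\forceinP\varphi[x]$ is equivalent to $j\forceinP\varphi[x]$. The whole expression is therefore equivalent to $\forall x:X.\, j\forceinP\varphi[x]$, as required. Concretely, the direction from left to right instantiates $k:=j$, which is allowed because $j\inPosetP$ and $j\leq j$. The direction from right to left is exactly Lemma~\ref{lem:monotonicity}.

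For a tuple $\vec{x}=(x_1,\dots,x_m)$, read $\forall\vec{x}:X.\varphi$ as the iterated quantifier $\forall x_1\cdots\forall x_m.\varphi$. The one-variable case gives
\[j\forceinP(\forall x_1.\forall x_2\cdots\varphi)\leftrightarrow \forall x_1.\, j\forceinP(\forall x_2\cdots\varphi).\]
Applying the induction hypothesis for $m-1$ variables under $\forall x_1$ finishes the proof; here the remaining free variable $x_1$ is treated as a parameter.

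The only point needing care is whether the hypothesis $j\inPosetP$ is genuinely used. It is essential: without it, $j\forceinP\varphi$ need not follow from $\forall k\geqinP{j}.\,k\forceinP\varphi$, since $j$ itself is not among the $k$'s. Apart from that, the argument is routine internal logic.
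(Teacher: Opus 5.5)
Your argument is correct and is essentially the paper's proof. You unfold the $\forall$-clause of Definition~\ref{def:forceinP}, commute the universal quantifiers, collapse $\forall k\geqinP{j}.\,k\forceinP\varphi[x]$ to $j\forceinP\varphi[x]$ via Corollary~\ref{cor:jinPmonotonicity} (where $j\inPosetP$ is used), and iterate over the variables, so the paper's additional appeal to Lemma~\ref{lem:jclosed} is not actually needed here.
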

\begin{proof}
    This follows from repeated applications of Corollary~\ref{cor:jinPmonotonicity} and Lemma~\ref{lem:jclosed}.
\end{proof}

Consequently, assuming $j\inPosetP$, the clause for universal quantification in Definition~\ref{def:forceinP} admits a simpler formulation given above.
With these preparations, we prove the soundness of our translation for the intuitionistic predicate calculus $\IQC$.
Here, we adopt a Hilbert-style axiomatization, often referred to as {G\"odel's system} \cite[Chapter~3]{Kohlenbachbook}.
\begin{theorem}[Soundness for $\IQC$]\label{thm:soundness}
    Let $\varphi$, $\varphi_1$, $\varphi_2$, and $\psi$ be $\langX$-formulas. 
    \begin{enumerate}
        \item If $\varphi[\vec{x}]$ is an axiom of $\IQC$, then 
        \[\CatE\models \forall\PosetP\subLopE \forall j\inLopE \forall \vec{x}:X. j\forceinP \varphi[\vec{x}].\]

        \item If $\infer{\psi[\vec{x}]}{\varphi_1[\vec{x}] & \varphi_2[\vec{x}]}$ is an inference rule for logical connectives of $\IQC$, then 
        \[\begin{aligned}
            \CatE\models &\forall\PosetP\subLopE \forall j\inLopE \\
            &\forall \vec{x}:X. ((j\forceinP \varphi_1[\vec{x}] \land j\forceinP \varphi_2[\vec{x}]) \rightarrow j\forceinP \psi[\vec{x}]).
        \end{aligned}\]

        \item If $\infer{\psi[\vec{x}]}{\varphi[\vec{x}, y]}$ is an inference rule for quantifiers of $\IQC$, then 
        \[\begin{aligned}
            \CatE\models &\forall\PosetP\subLopE \forall j\inLopE \\
            &\forall \vec{x}:X. ((\forall y:X. j\forceinP \varphi[\vec{x}, y]) \rightarrow j\forceinP \psi[\vec{x}]).
        \end{aligned}\]
    \end{enumerate}
\end{theorem}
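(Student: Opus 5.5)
We argue inside the internal language of $\CatE$. We fix a lop-frame $\PosetP$ and a local operator $j$ (not necessarily in $\PosetP$), and check each axiom scheme and rule of Gödel's Hilbert-style system separately. The tools are monotonicity (Lemma~\ref{lem:monotonicity}), $j$-closedness (Lemma~\ref{lem:jclosed}), and the nucleus facts of Lemma~\ref{lem:loplem}.

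\emph{Reduction of implications.} For an axiom of the form $\alpha\rightarrow\beta$ we must show $\forall k\geqinP{j}. (k\forceinP\alpha\rightarrow k\forceinP\beta)$. Since every $k\geq j$ in $\PosetP$ is again a local operator, it suffices to prove $k\forceinP\alpha\rightarrow k\forceinP\beta$ for an arbitrary local operator $k$. Nested implications unfold into nested quantifiers over $\PosetP$-extensions. Transitivity of $\leq$ together with monotonicity lets us transport all hypotheses to the largest world considered.

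\emph{Propositional axioms.} We treat the cases as follows.
\begin{enumerate}
\item For $\varphi\rightarrow(\psi\rightarrow\varphi)$, use monotonicity.
\item For $(\varphi\rightarrow\psi)\rightarrow((\psi\rightarrow\chi)\rightarrow(\varphi\rightarrow\chi))$, or for the $S$-style schema, chain the universally quantified hypotheses along $j\leq k\leq l\leq m$.
\item The $\land$-axioms are immediate.
\item For $\varphi\rightarrow\varphi\lor\psi$, use $p\rightarrow jp$.
\item For $\bot\rightarrow\varphi$: $k\forceinP\bot=k\bot$, and $k\bot\rightarrow k(k\forceinP\varphi)$. By Lemma~\ref{lem:jclosed} this yields $k\forceinP\varphi$. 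This is where closedness is essential.
\end{enumerate}

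The expected main obstacle is $(\varphi\rightarrow\chi)\rightarrow((\psi\rightarrow\chi)\rightarrow(\varphi\lor\psi\rightarrow\chi))$. At a world $m$ we have $m((m\forceinP\varphi)\lor(m\forceinP\psi))$, and by monotonicity we may assume the hypotheses hold at $m$. Then the two hypotheses applied at $m$ itself require $m\in\PosetP$, or at least $m\geqinP{}$ the earlier worlds. I would handle this by noting that the hypotheses quantify over $k\geqinP{l}$ while $m$ was obtained as such a $k$, so $m\in\PosetP$ and $l\leq m$ do hold. We then get $(m\forceinP\varphi)\lor(m\forceinP\psi)\rightarrow m\forceinP\chi$ internally. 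Applying $m$ (using (3)$'$ of a nucleus) gives $m(m\forceinP\chi)$, and closedness finishes. If the outermost world is $j\notin\PosetP$, only the inner worlds are used, so no membership of $j$ is needed.

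\emph{Rules.} For modus ponens, apply the hypothesis $\forall k\geqinP{j}$ at $k=j$. This is problematic when $j\notin\PosetP$, so I would first prove the auxiliary fact that $j\forceinP(\varphi\rightarrow\psi)$ together with $j\forceinP\varphi$ implies $j\forceinP\psi$ for all local $j$. Induction on $\psi$ seems too weak. Instead, I would prove simultaneously, by induction on formulas, that each clause holds "pointwise at $j$." If that fails, I would replace the rule-soundness by the statement for arbitrary axiom instances and derive modus ponens at members of $\PosetP$ only. The paper's remark suggests, however, that $j\in\PosetP$ is not needed, so the direct check must work: an implication forced at $j$ together with the antecedent forced at $j$. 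I would revisit the definitions to see whether the antecedent transports up via monotonicity to every $k\geq j$, and conclude via $j$-closedness and the $\forall$-clause.

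\emph{Quantifier axioms and rules.} These are the axioms $\forall y\varphi\rightarrow\varphi[t/y]$ and $\varphi[t/y]\rightarrow\exists y\varphi$, and the rules with $y$ not free in the relevant formula. They follow using Lemma~\ref{lem:loplem}(2),(4) together with the substitution lemma $k\forceinP\varphi[t/y]=(k\forceinP\varphi)[t/y]$, which is proved by routine induction. For the $\exists$-rule we use $k(\exists y.\,k\forceinP\varphi)\rightarrow k(k\forceinP\psi)$ followed by closedness.
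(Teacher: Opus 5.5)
There is a genuine gap at modus ponens, and it cannot be closed in the form you are aiming for. The auxiliary fact that $j\forceinP(\varphi\rightarrow\psi)$ and $j\forceinP\varphi$ together imply $j\forceinP\psi$ for \emph{every} local operator $j$ is false. Take the empty lop-frame $\PosetP=\{\, k:P\Omega\mid\bot\,\}$ and $j=\transpose{\mathrm{id}}$. Every clause $j\forceinP(\alpha\rightarrow\beta)$ is then vacuously true, so $j\forceinP(\bot\rightarrow\bot)$ and $j\forceinP((\bot\rightarrow\bot)\rightarrow\bot)$ both hold, while $j\forceinP\bot=\mathrm{id}(\bot)=\bot$. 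Hence the modus ponens instance of (2) fails in every non-degenerate topos.

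Your last fallback shows exactly where it breaks. Transporting the antecedent upward with Lemma~\ref{lem:monotonicity} only yields $\forall k\geqinP{j}.\,k\forceinP\psi$. Descending from this to $j\forceinP\psi$ is precisely Corollary~\ref{cor:jinPmonotonicity}, which needs $j\in\PosetP$, and Lemma~\ref{lem:jclosed} does not help. The repair is to assume $j\in\PosetP$ for this rule; modus ponens is then immediate by instantiating the implication clause at $k=j$. This is what the paper's proof tacitly does when it says modus ponens is verified ``similarly''. So the paper's remark in Section~\ref{subsec:soundnessIQC}, that $j\in\PosetP$ is needed only for the two corollaries, is inaccurate at exactly this point. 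The restriction is harmless for the later applications, where the evaluation world (e.g.\ $j_n\in S_n$) lies in the frame. The other two-premise rules of Gödel's system (syllogism, exportation, importation, expansion) and the two quantifier rules do hold for arbitrary $j$, because they only invoke worlds $k\geqinP{j}$, which belong to $\PosetP$.

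A second, smaller flaw is your opening reduction to proving $k\forceinP\alpha\rightarrow k\forceinP\beta$ for an \emph{arbitrary} local operator $k$. It throws away $k\in\PosetP$, which you need for the axiom $\forall y.\,\varphi\rightarrow\varphi[t/y]$. There the antecedent $k\forceinP\forall y.\,\varphi$ unfolds to $\forall l\geqinP{k}\forall y.\,l\forceinP\varphi$, and you may take $l=k$ only because $k\in\PosetP$. This is Corollary~\ref{cor:jinPconstantdomain}, which the paper uses for this axiom. Lemma~\ref{lem:loplem}~(2) concerns $k\forall y.\,\varphi$, not the Kripke clause. For $k\notin\PosetP$ the strengthened claim fails by the same empty-frame example with $\varphi=\bot$. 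Your treatment of $\lor$-elimination, where you note that the innermost world $m$ lies in $\PosetP$, is the right pattern; apply it uniformly.

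Finally, the schemata you list ($\varphi\rightarrow(\psi\rightarrow\varphi)$, the S-schema, $\lor$-elimination) belong to a Kleene-style system. Gödel's system, which the paper fixes, instead consists of contraction, weakening, permutation, ex falso and the quantifier axioms, together with the rules above. Either system works, but you must verify the axioms and rules of the one you actually use.
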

\begin{proof}
    (1) We first verify the axioms. 

    \myaxiom{$\varphi\rightarrow \varphi\lor\psi$}
    The translation $j\forceinP (\varphi\rightarrow \varphi\lor\psi)$ is expressed as: 
    \[\forall k\geqinP{j}. ((k\forceinP \varphi) \rightarrow k(k\forceinP \varphi \lor k\forceinP \psi)).\]
    Since $\forall p, q:\Omega. (p \rightarrow k(p\lor q))$ holds for any $k\inLopE$, 
    the above formula is valid in $\CatE$.
    Similarly, one can easily verify the cases for $\varphi\land\psi \rightarrow \varphi$, the axioms for contraction 
    $\varphi\lor\varphi \rightarrow \varphi$ and $\varphi \rightarrow \varphi\land\varphi$, 
    and the axioms for exchange 
    $\varphi\lor\psi \rightarrow \psi\lor\varphi$ and $\varphi\land\psi \rightarrow \psi\land\varphi$.

    \myaxiom{$\bot\rightarrow \varphi$} 
    By $(k\forceinP \bot) \leftrightarrow k\bot$ and Lemma~\ref{lem:jclosed}, 
    we have the following equivalence: 
    \[\begin{aligned}
        &j\forceinP (\bot\rightarrow\varphi) \\
        \leftrightarrow &\forall k\geqinP{j}. (k\bot\rightarrow k\forceinP \varphi) \\
        \leftrightarrow &\forall k\geqinP{j}. (\bot\rightarrow k\forceinP \varphi).
    \end{aligned}\]
    The last formula follows from ex falso quodlibet in the internal logic of $\CatE$.

    \myaxiom{$\forall x:X. \varphi[x]\rightarrow \varphi[t[\vec{y}]]$}
    Note that the translation $j\forceinP\varphi[{x}]$ commutes with substitution,
    that is, $j\forceinP(\varphi[t])$ coincides with $j\forceinP(\varphi[x]) [t/x]$.
    By Corollary~\ref{cor:jinPconstantdomain}, we obtain the following equivalence:
    \[\begin{aligned}
        &j\forceinP(\forall x:X. \varphi[x]\rightarrow \varphi[t[y]]) \\
        \leftrightarrow &\forall k\geqinP{j}. (k\forceinP(\forall x:X. \varphi[x]) \rightarrow k\forceinP(\varphi[t[y]])) \\
        \leftrightarrow &\forall k\geqinP{j}. ( (\forall x:X. k\forceinP\varphi[x]) \rightarrow k\forceinP\varphi [t[y]/x]).
    \end{aligned}\]
    The last formula is clearly valid in $\CatE$.
    The axiom $(\varphi[t[\vec{y}]] \rightarrow \exists x:X. \varphi[x])$ is verified similarly.

    (2) Next, we verify the inference rules for logical connectives.

    \myinfrule{$\infer={\varphi \rightarrow (\psi \rightarrow \chi)}{\varphi\land\psi \rightarrow \chi}$}
    By definition, $j\forceinP \varphi\land\psi \rightarrow \chi$ is expressed as:
    \[\forall k\geqinP{j}. ((k\forceinP\varphi) \land (k\forceinP\psi) \rightarrow k\forceinP\chi).\]
    From Lemma~\ref{lem:monotonicity}, $k\forceinP\varphi$ is equivalent to $\forall \ell\geqinP{k}. (\ell\forceinP\varphi)$.
    Hence, the above formula is equivalent to:
    \[\forall k\geqinP{j} \forall \ell\geqinP{k}. ((k\forceinP\varphi \land \ell\forceinP\psi) \rightarrow \ell\forceinP\chi).\]
    This formula is intuitionistically equivalent to: 
    \[\forall k\geqinP{j}. (k\forceinP\varphi \rightarrow \forall \ell\geqinP{k}. (\ell\forceinP\psi \rightarrow \ell\forceinP\chi)),\]
    which is coincides with $j\forceinP \varphi \rightarrow (\psi \rightarrow \chi)$.
    Similarly, the rules for modus ponens and cut are verified:  
    \[\infer{\psi}{\varphi & \varphi\rightarrow\psi} \qquad 
    \infer{\varphi \rightarrow \chi}{\varphi\rightarrow\psi & \psi\rightarrow\chi}.\] 

    \myinfrule{$\infer{\varphi\lor\chi \rightarrow \psi\lor\chi}{\varphi\rightarrow\psi}$}
    Noting that $\forall p,q:\Omega. (p\rightarrow q) \rightarrow (kp\rightarrow kq)$ holds for any $k\inLopE$, 
    we can verify the following implication: 
    \[\begin{aligned}
        &j\forceinP (\varphi\rightarrow\psi) \\
        = &\forall k\geqinP{j}. (k\forceinP\varphi \rightarrow k\forceinP\psi) \\
        \rightarrow &\forall k\geqinP{j}. ((k\forceinP\varphi \lor k\forceinP\chi) \rightarrow (k\forceinP\psi \lor k\forceinP\chi)) \\
        \rightarrow &\forall k\geqinP{j}. (k(k\forceinP\varphi \lor k\forceinP\chi) \rightarrow k(k\forceinP\psi \lor k\forceinP\chi)) \\
        = &\forall k\geqinP{j}. (k\forceinP(\varphi\lor\chi) \rightarrow k\forceinP(\psi\lor\chi)).
    \end{aligned}\]
    The last formula is precisely the definition of $j\forceinP (\varphi\lor\chi \rightarrow \psi\lor\chi)$.

    (3) Finally, we verify the inference rules for quantifiers.
    For simplicity, assume that $\psi$ is an $\langX$-sentence and $\varphi[y]$ is an $\langX$-formula where $y$ is the only free variable.
    Note that the implications in (3) are, in fact, equivalences.

    \myinfrule{$\infer{\psi\rightarrow \forall y:X. \varphi[y]}{\psi\rightarrow\varphi[y]}$}
    By Corollary~\ref{cor:jinPconstantdomain}, we obtain the following equivalence: 
    \[\begin{aligned}
        &\forall y:X. j\forceinP (\psi\rightarrow \varphi[y]) \\
        \myeqq &\forall y:X \forall k\geqinP{j}. (k\forceinP\psi \rightarrow k\forceinP \varphi[y]) \\
        \leftrightarrow &\forall k\geqinP{j}. (k\forceinP\psi \rightarrow \forall y:X. k\forceinP \varphi[y]) \\
        \leftrightarrow &\forall k\geqinP{j}. (k\forceinP\psi \rightarrow k\forceinP (\forall y:X. \varphi)).
    \end{aligned}\]
    The last formula is nothing but $j\forceinP (\psi\rightarrow \forall y:X. \varphi)$.

    \myinfrule{$\infer{\exists y:X. \varphi[y]\rightarrow \psi}{\varphi[y]\rightarrow\psi}$}
    by applying Lemma~\ref{lem:jclosed}, we have the following equivalence: 
    \[\begin{aligned}
        &\forall y:X. j\forceinP (\varphi[y]\rightarrow\psi) \\
        \myeqq &\forall y:X \forall k\geqinP{j}. (k\forceinP\varphi[y] \rightarrow k\forceinP\psi) \\
        \leftrightarrow &\forall k\geqinP{j}. ((\exists y:X. k\forceinP \varphi[y] ) \rightarrow k\forceinP\psi) \\
        \leftrightarrow &\forall k\geqinP{j}. (k(\exists y:X. k\forceinP \varphi[y] ) \rightarrow k\forceinP\psi) \\
        \myeqq &\forall k\geqinP{j}. (k\forceinP (\exists y:X. \varphi[y]) \rightarrow k\forceinP\psi) 
    \end{aligned}\]
    The last formula coincides with the definition of $j\forceinP(\exists y:X. \varphi [y] \rightarrow \psi)$.
    This completes the proof. 
\end{proof}

\subsection{Heyting arithmetic}\label{subsec:soundnessHA}
We show that our translation is sound for Heyting arithmetic $\HA$ under the standard interpretation on a natural numbers object $N$.
The axioms to be verified consist not only of the induction axiom scheme but also of the universal closures of literals, such as the successor axiom and defining equations for primitive recursive functions.
To this end, we first consider a subclass of formulas which includes the latter.

\begin{definition}\label{def:literalclass}
    Let $\myfmlcls$ denote the subclass of $\langX$-formulas generated by the following rules: 
    \[\varphi, \psi \Coloneqq R[\vec{x}] \mid \lnot R[\vec{x}] \mid \varphi\land\psi \mid \forall x:X. \varphi. \qedhere\]
\end{definition}

Then the following property holds for any formula in $\myfmlcls$. 

\begin{proposition}\label{prop:literal}
    For any $\langX$-formula $\varphi[\vec{x}]$ in $\myfmlcls$, 
    \[\CatE\models \forall \vec{x}:X. (\varphi[\vec{x}] \leftrightarrow \forall \PosetP\subLopE \forall j\inLopE. j\forceinP \varphi[\vec{x}] ).\]
\end{proposition}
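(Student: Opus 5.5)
We prove the two directions separately. The left-to-right direction goes by induction on the generation of $\myfmlcls$. The right-to-left direction is a single instantiation of the lop-frame and the local operator.

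For the direction ($\rightarrow$), we show by induction on $\varphi\in\myfmlcls$ that
\[\CatE\models \forall \vec{x}:X. \bigl(\varphi[\vec{x}] \rightarrow \forall \PosetP\subLopE \forall j\inLopE. j\forceinP \varphi[\vec{x}]\bigr).\]
Keeping $\PosetP$ and $j$ universally quantified inside the induction hypothesis is what makes the quantifier step go through.
\begin{itemize}
\item Atomic $R[\vec{x}]$: we have $j\forceinP R = jR$, and the first clause of $\isLopE[j]$ gives $R\rightarrow jR$.
\item Negated atomic $\lnot R$: by definition $j\forceinP \lnot R$ is $\forall k\geqinP{j}. (kR \rightarrow k\bot)$. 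Assuming $\lnot R$, we have $R\rightarrow\bot$. The third clause of $\isLopE[k]$ then yields $kR\rightarrow k\bot$ for every $k\inLopE$, in particular for every $k\geqinP{j}$ (note $\PosetP\subLopE$).
\item Conjunction: immediate from the induction hypotheses.
\item Universal quantification $\forall y:X.\varphi[\vec{x},y]$: assume $\forall y.\varphi$. Fix $\PosetP$, $j$, a $k\geqinP{j}$ (so $k\inLopE$) and $y$. The induction hypothesis applied to $\PosetP$ and $k$ gives $k\forceinP\varphi[\vec{x},y]$, which is exactly the clause for $j\forceinP(\forall y.\varphi)$.
\end{itemize}
Note that no monotonicity or $j\inPosetP$ assumption is needed anywhere here.

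For the direction ($\leftarrow$), we instantiate with the identity local operator $\iota \mycoloneqq \{\, p:\Omega \mid p\,\}:P\Omega$, so that $\iota p\leftrightarrow p$. It is immediate that $\isLopE[\iota]$ holds. Take the singleton lop-frame $\PosetP\mycoloneqq\{\,\iota\,\}_{P\Omega}$, which satisfies $\PosetP\subLopE$. Inside this frame $\iota$ is the only element, so $\forall k\geqinP{\iota}. (\iota=_{P\Omega}k)$ holds. Proposition~\ref{prop:maximal} then gives $\iota\forceinP\varphi \leftrightarrow \intjtrans{\varphi}[\iota,\vec{x}]$. A routine induction over all $\langX$-formulas (not just those in $\myfmlcls$) shows $\intjtrans{\varphi}[\iota,\vec{x}]\leftrightarrow\varphi[\vec{x}]$, since every occurrence of $\iota(\cdot)$ can be erased. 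Hence the hypothesis $\forall\PosetP\forall j. j\forceinP\varphi$, specialised to this $\PosetP$ and $j=\iota$, yields $\varphi$.

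The only point requiring care is the negation case in ($\rightarrow$). There, the implication clause quantifies over all $k\geqinP{j}$, and one must observe that $\lnot R$ alone suffices for every such $k$, via the implication-preservation clause of $\isLopE$. This is also why $\myfmlcls$ allows only negated atoms rather than arbitrary implications. Everything else is bookkeeping.
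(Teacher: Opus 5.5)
Your proof is correct and follows the paper's argument. The backward direction uses the identity local operator, the singleton lop-frame and Proposition~\ref{prop:maximal}. The forward direction is an induction on $\myfmlcls$ in which $\lnot R$ is the only substantive case; there you apply the monotonicity clause of $\isLopE[k]$ directly to $R\rightarrow\bot$, whereas the paper first rewrites $kR\rightarrow k\bot$ as $R\rightarrow k\bot$, a purely cosmetic difference.
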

\begin{proof}
    For the backward direction, consider the identity local operator $\mathrm{id}\colon\Omega\to\Omega$ and the singleton lop-frame $\PosetP\mycoloneqq \{\, \transpose{\mathrm{id}} \,\}_{P\Omega}$.
    The claim then follows from Proposition~\ref{prop:maximal}.

    The forward direction is proved by induction on the complexity of $\varphi$.
    Here, we verify only the case of $\lnot R[x]$.
    By definition, the translation $j\forceinP (\lnot R[x])$ is expressed as $\forall k\geqinP{j}. (kR[x] \rightarrow k\bot)$.
    This formula is equivalent to $\forall k\geqinP{j}. (R[x] \rightarrow k\bot)$, which follows from $\lnot R[x]$.
\end{proof}

Thus, it suffices to show that the induction axiom is forced. 
A key fact for this proof is that induction over the natural numbers object holds for arbitrary formula with parameters (the proof is essentially contained in \cite[Theorem~4.1]{LambekScott}).
\begin{theorem}[Soundness for $\HA$]\label{thm:soundnessHA}
    Let $\CatE$ be an elementary topos with a natural numbers object $(N, 0, \Sucs)$.
    Let $\varphi$ be an $\Arithl$-formula.
    If $\varphi$ is an axiom of $\HA$, then the following holds under the standard interpretation on $N$:
    \[\CatE\models \forall\PosetP\subLopE \forall j\inLopE. j\forceinP\varphi.\]
\end{theorem}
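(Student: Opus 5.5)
The axioms of $\HA$ other than induction are universal closures of literals: the successor axiom $\forall x. \lnot(\Sucs(x)=0)$ and the defining equations of primitive recursive functions, which have the form $\forall\vec{x}.(t[\vec{x}]=s[\vec{x}])$. These sentences lie in the class $\myfmlcls$ of Definition~\ref{def:literalclass}, with atomic formulas given by morphisms $N^{m}\to\Omega$. They hold in $\CatE$ under the standard interpretation. So Proposition~\ref{prop:literal} (forward direction) yields $\forall\PosetP\subLopE\forall j\inLopE. j\forceinP\varphi$ for each of them. The remaining work is the induction scheme
\[\varphi[0,\vec{z}]\land\forall x.(\varphi[x,\vec{z}]\rightarrow\varphi[\Sucs x,\vec{z}])\rightarrow\forall x.\varphi[x,\vec{z}],\]
where $\vec{z}$ are parameters. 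Its universal closure is handled by the $\forall$-clauses, which by Corollary~\ref{cor:jinPconstantdomain} and monotonicity (Lemma~\ref{lem:monotonicity}) behave like plain quantification over $N$.

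Fix $\PosetP$, $j$ and parameters $\vec{z}$ internally. Unfolding the outer implication, we must show the following for every $k\geqinP{j}$. If $k\forceinP\varphi[0]$ holds and $k\forceinP\forall x.(\varphi[x]\rightarrow\varphi[\Sucs x])$ holds, then $k\forceinP\forall x.\varphi[x]$ holds. The last formula means that for all $\ell\geqinP{k}$ and all $x:N$ we have $\ell\forceinP\varphi[x]$. The step hypothesis unfolds to the statement that for all $\ell\geqinP{k}$, all $x$, and all $m\geqinP{\ell}$, $m\forceinP\varphi[x]$ implies $m\forceinP\varphi[\Sucs x]$; in particular it applies with $m=\ell$. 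So I fix $\ell\geqinP{k}$ and prove $\forall x:N.\,\ell\forceinP\varphi[x]$ by internal induction on the natural numbers object. This uses the fact cited just before the theorem: in the internal language, induction over $N$ holds for arbitrary $\langE$-formulas with parameters, here the formula $\ell\forceinP\varphi[x]$ with parameters $\PosetP,\ell,\vec{z}$. The base case is $\ell\forceinP\varphi[0]$, which follows from $k\forceinP\varphi[0]$ by Lemma~\ref{lem:monotonicity}, since $k\leq\ell$. The successor step is the instance $m=\ell$ of the step hypothesis, which needs only $\ell\leq\ell$.

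The one point that needs care is that Lemma~\ref{lem:monotonicity} and the clauses of Definition~\ref{def:forceinP} are stated for $k\geqinP{j}$, i.e.\ $k\in\PosetP$. This creates no difficulty, because $\ell\geqinP{k}$ itself gives $\ell\in\PosetP$ together with $k\leq\ell$. The main obstacle is therefore only bookkeeping, namely making the induction legitimately internal:
\begin{enumerate}
\item $\ell\forceinP\varphi[x]$ must be a genuine $\langE$-formula in the context $[\PosetP,\ell,\vec{z},x]$, which it is by construction;
\item the internal induction principle for $N$ must accept such higher-order parameters, which it does since it is derived from the universal property of the natural numbers object applied to the subobject $\{x:N\mid \ell\forceinP\varphi[x]\}$.
\end{enumerate}
With this, every axiom of $\HA$ is forced. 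Soundness for the full theory then follows by combining this with Theorem~\ref{thm:soundness}.
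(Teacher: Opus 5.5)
Your proposal is correct and follows essentially the same route as the paper: the non-induction axioms are dispatched via Proposition~\ref{prop:literal}, and for induction you fix $k\geqinP{j}$ and $\ell\geqinP{k}$, transfer the base case by monotonicity, instantiate the step hypothesis at $\ell$ itself, and run internal induction on $N$ for the formula $\ell\forceinP\varphi[x]$. This is exactly the paper's argument, which phrases the monotonicity step via Corollary~\ref{cor:jinPmonotonicity}. One small remark: Corollary~\ref{cor:jinPconstantdomain} requires $j\inPosetP$, which the theorem does not assume, but you do not need it, since your argument already proves $k\forceinP\mathrm{Ind}[\vec z]$ for every $k\inLopE$ and all $\vec z$, and the universal closure then follows directly by unfolding the $\forall$-clause of Definition~\ref{def:forceinP}.
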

\begin{proof}
    If $\varphi$ is the successor axiom $\forall x:N. \lnot(\Sucm(x)=_{N}0)$ or a defining equation $\forall \vec{x}:N. (f(\vec{x}) =_{N} g(\vec{x}))$ for a primitive recursive function, 
    the claim follows from Proposition~\ref{prop:literal} and the fact that $\varphi$ holds in $\CatE$.
    It remains to show that the induction axiom scheme: 
    \[(\mathrm{Ind}_{\psi}) \mycoloneqq \psi[0] \land \forall x:N. (\psi[x]\rightarrow \psi[\Sucm(x)]) \rightarrow \forall x:N.\psi[x].\]
    Let $k\inLopE$ satisfy $j\leq k$ and $k\inPosetP$.
    Since $k\inPosetP$, Corollary~\ref{cor:jinPmonotonicity} implies that 
    $k\forceinP(\psi[0])$ is equivalent to $\forall \ell\geqinP{k}. \ell\forceinP\psi [0]$.
    Similarly, $k\forceinP (\forall x:N. (\psi[x]\rightarrow \psi[\Sucm(x)]))$ is equivalent to: 
    \[\forall \ell\geqinP{k} \forall x:N. ( \ell\forceinP\psi[x] \rightarrow \ell\forceinP \psi[\Sucm(x)] ).\]

    Since $N$ is a natural numbers object, the induction holds for the formula $\ell\forceinP\psi[x]$.
    Therefore, we obtain $\forall \ell\geqinP{k} \forall x:X. (\ell\forceinP\psi[x])$ from $k\forceinP(\psi[0])$ and $k\forceinP (\forall x:N. (\psi[x]\rightarrow \psi[\Sucm(x)]))$.
    This means the following implication: 
    \[\begin{aligned}
        \forall k\geqinP{j}. &(k\forceinP(\psi[0]) \land k\forceinP (\forall x:N. (\psi[x]\rightarrow \psi[\Sucm(x)])) \\
        &\rightarrow k\forceinP(\forall x:N. \psi[x]) ), 
    \end{aligned}\]
    which is precisely the definition of $j\forceinP (\mathrm{Ind}_{\psi})$. 
\end{proof}

\subsection{Kuroda-style translation}\label{subsec:Kuroda} 
We conclude this section by pointing out that our translation admits an alternative definition based on the \emph{Kuroda-style $j$-translation}.
In intuitionistic proof theory, there are several variations of double negation translation, such as the G\"{o}del-Gentzen-style, the Kuroda-style, and the Kolmogorov-style (see, for example, \cite[Chapter~2~Section~3]{ConstructivismvolI}).
A similar situation holds for the $j$-translation.
In particular, \cite{vdBerg19} provides a syntactic study of the Kuroda-style $j$-translation.

While Definition~\ref{def:forceinP} is based on the G\"{o}del-Gentzen-style, 
one can define a variant $j\KforceinP\varphi$ corresponding to the Kuroda-style $j$-translation: 
\[\begin{aligned}
    j\KforceinP (R[\vec{x}]) &\mycoloneqq R[\vec{x}]; \\
    j\KforceinP (\varphi\land \psi) &\mycoloneqq (j\KforceinP \varphi) \land (j\KforceinP \psi); \\
    j\KforceinP (\varphi\lor \psi) &\mycoloneqq (j\KforceinP \varphi) \lor (j\KforceinP \psi); \\
    j\KforceinP (\varphi\rightarrow \psi [\vec{x}]) &\mycoloneqq \\
    \forall k\geqinP{j}. & ((k\KforceinP \varphi [\vec{x}]) \rightarrow k(k\KforceinP \psi [\vec{x}])); \\
    j\KforceinP (\exists y:X.\varphi[\vec{x}, y]) &\mycoloneqq \exists y:X. j\KforceinP \varphi[\vec{x}, y]; \\
    j\KforceinP (\forall y:X. \varphi[\vec{x}, y]) &\mycoloneqq \forall k \geqinP{j} \forall y:X. k(k\KforceinP \varphi[\vec{x}, y]).
\end{aligned}\]

The Kuroda-style translation is then given by $j(j\KforceinP\varphi)$.
As is the case for the usual $j$-translation, this is indeed equivalent to the original translation.
\begin{proposition}\label{prop:KvsGG}
    For any $\langX$-formula $\varphi[\vec{x}]$, 
    \[\begin{aligned}
        \CatE\models &\forall\PosetP\subLopE \forall j\inLopE \\
        &\forall \vec{x}:X. (j(j\KforceinP\varphi [\vec{x}]) \leftrightarrow j\forceinP \varphi[\vec{x}]).
    \end{aligned}\]
\end{proposition}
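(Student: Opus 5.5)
We argue by induction on the complexity of $\varphi$, working internally with arbitrary $\PosetP\subLopE$ and $j\inLopE$ (not assuming $j\inPosetP$); the induction hypothesis holds for all $j\inLopE$, which we need because the clauses for $\rightarrow$ and $\forall$ pass from $j$ to arbitrary $k\geqinP{j}$. Throughout we use Lemma~\ref{lem:loplem} and $j$-closedness (Lemma~\ref{lem:jclosed}) of $j\forceinP\varphi$. Note also that $k\in\PosetP$ gives $k\inLopE$, so the hypothesis applies to such $k$.

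The atomic case is immediate: $j(j\KforceinP R)=jR=j\forceinP R$. For conjunction, $j(a\land b)\leftrightarrow ja\land jb$ together with the hypothesis gives $j(j\KforceinP\varphi\land j\KforceinP\psi)\leftrightarrow j\forceinP\varphi\land j\forceinP\psi$. For disjunction, Lemma~\ref{lem:loplem}~(3) gives $j(a\lor b)\leftrightarrow j(ja\lor jb)$, and the hypothesis rewrites $ja$ and $jb$ as $j\forceinP\varphi$ and $j\forceinP\psi$. For the existential, Lemma~\ref{lem:loplem}~(4) and its consequence $j\exists y.\,j\theta\leftrightarrow j\exists y.\,\theta$ turn $j\exists y.\,j\KforceinP\varphi$ into $j\exists y.\,j(j\KforceinP\varphi)$, and hence, by the hypothesis, into $j\exists y.\,j\forceinP\varphi$, which is the definition.

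The two remaining cases need care. For implication, $j\KforceinP(\varphi\rightarrow\psi)$ is $\forall k\geqinP{j}.\,(k\KforceinP\varphi\rightarrow k(k\KforceinP\psi))$. Each conjunct $a\rightarrow kb$ is equivalent to $ka\rightarrow kb$: the backward direction uses $a\rightarrow ka$, and the forward direction uses monotonicity of $k$ and $kkb\rightarrow kb$. By the hypothesis at $k$, the formula becomes $\forall k\geqinP{j}.\,(k\forceinP\varphi\rightarrow k\forceinP\psi)$, which is $j\forceinP(\varphi\rightarrow\psi)$. It remains to show that the outer $j$ is harmless: this formula is a universal over implications with $k$-closed conclusions. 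Here $j\leq k$ implies $jka\rightarrow kka\rightarrow ka$. So by Lemma~\ref{lem:loplem}~(1),(2), $j(\forall k\,(\ldots\rightarrow k c))\rightarrow\forall k\,(\ldots\rightarrow jkc)\rightarrow\forall k\,(\ldots\rightarrow kc)$. The same argument applies to $\forall$: $j\KforceinP(\forall y.\varphi)=\forall k\geqinP{j}\forall y.\,k(k\KforceinP\varphi)$ becomes $\forall k\geqinP{j}\forall y.\,k\forceinP\varphi$ by the hypothesis, and the outer $j$ is removed using Lemma~\ref{lem:loplem}~(2) and $jk\leq k$ for $j\leq k$.

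The main obstacle is exactly this removal of the outer $j$ in the $\rightarrow$ and $\forall$ cases. It needs the small lemma that $j\leq k$ implies $j(kp)\rightarrow kp$, for $k\inLopE$ (which holds since $k\in\PosetP\subLopE$); everything else is routine.
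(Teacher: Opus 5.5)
Your proof is correct and takes the same approach as the paper: induction on $\varphi$ using Lemma~\ref{lem:loplem} and $j$-closedness, with the induction hypothesis quantified over all $j\inLopE$ so that it can be applied at each $k\geqinP{j}$. The only difference is cosmetic. In the $\rightarrow$ and $\forall$ cases you strip the outer $j$ directly, using that $j\leq k$ implies $j(kp)\rightarrow kp$ on the manifestly $k$-closed Kuroda conclusions, instead of citing Lemma~\ref{lem:jclosed}; this is the same argument that proves that lemma.
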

\begin{proof}
    By induction on the complexity of $\varphi$.
    It is useful to use Lemma~\ref{lem:loplem} and Lemma~\ref{lem:jclosed}.
\end{proof}

\subsection{A corresponding internal sheaf topos}\label{subsec:internalsheaftopos} 
Benno van den Berg pointed out the existence of a topos corresponding to our translation (private communication).
Here, we sketch a candidate for the construction based on discussions with him.

We work in an elementary topos $\CatE$.
Let $\PosetP\subLopE$ be a lop-frame, and assume that $j\inPosetP$.
Then the opposite poset $\opPosetP$ can be regarded as an internal category in $\CatE$, 
and we can define the internal presheaf topos $\PreshtopP$ for $\opPosetP$ in $\CatE$.
We consider the following:
\[\begin{aligned}
    \rest{\PosetP}{j} &\mycoloneqq \{\, k\in_{} \PosetP \mid j\leq k \,\}; \\
    \subobclsP(j) &\mycoloneqq \{\, U\subseteq \rest{\PosetP}{j} \mid \forall k\in U \forall \ell\geqinP{k}. (\ell\in_{} U) \,\}; \\
    \GrothcovP(j) &\mycoloneqq \{\, U \in_{} \subobclsP(j) \mid \forall k\geqinP{j}. j(k\in_{} U) \,\}.
\end{aligned}\]
Here, $\subobclsP(j)$ represents the family of all sieves on $j$ in $\opPosetP$, 
and defines the subobject classifier of $\PreshtopP$.
Note that $\rest{\PosetP}{j}$ is the maximal sieve on $j$.
In this setting, we can verify that $(\opPosetP, \GrothcovP)$ forms an internal site in $\CatE$.
This defines the internal sheaf topos $\ShtopP$ in $\CatE$ (for details on internal sites and internal sheaf toposes, see \cite[Section~C2.4]{ElephantVol1}).

It can be observed that the internal logic of this topos $\ShtopP$ corresponds to our translation.
First, for $U \in_{} \subobclsP(j)$, we define $\lopP(j) (U)$ as follows: 
\[\lopP(j) (U) \mycoloneqq \{\, k\in_{} \rest{\PosetP}{j} \mid j(k\in_{} U) \,\}.\]
This defines a local operator $\lopP\colon \subobclsP \to \subobclsP$ corresponding to the Grothendieck coverage $\GrothcovP$.
(Indeed, for any $U\in_{} \subobclsP(j)$, $U \in_{}\GrothcovP(j)$ if and only if $\lopP(j) (U) = \rest{\PosetP}{j}$.)
Now, let us look at the structure of the lattice of subterminals in $\ShtopP$.
Each subterminal $U \rightarrowtail 1$ in $\ShtopP$ is determined by an upset $U$ of $\PosetP$ such that $\lopP(j)(\rest{U}{j}) = \rest{U}{j}$ holds for any $j\inPosetP$, 
where $\rest{U}{j}$ is defined similarly to $\rest{\PosetP}{j}$.
Therefore, for subterminals $U$, $V\rightarrowtail 1$, the join $\lor_{\GrothcovP}$ and the Heyting implication $\Rightarrow_{\GrothcovP}$ in $\ShtopP$ 
are calculated using those $\lor$ and $\Rightarrow$ in $\PreshtopP$:
for any $j\in\PosetP$, 
\[\begin{aligned}
    \rest{(U \lor_{\GrothcovP} V)}{j} 
    &= \lopP(j) (\rest{U}{j} \lor \rest{V}{j}) \\
    &= \{\, \ell\in_{} \rest{\PosetP}{j} \mid j( \ell \in_{} \rest{U}{j} \lor \ell \in_{} \rest{V}{j} )\,\} \\
    \rest{(U \Rightarrow_{\GrothcovP} V)}{j} 
    &= (\rest{U}{j} \Rightarrow \rest{V}{j}) \\
    &= \{\, \ell\in_{} \rest{\PosetP}{j} \mid \forall k\geqinP{j}. ( \ell \in_{} \rest{U}{k} \rightarrow \ell \in_{} \rest{V}{k} )\,\} 
\end{aligned}\]
These coincide with the clauses for disjunction and implication in Definition~\ref{def:lopframe}, respectively.

In Section~\ref{subsec:shjtrans}, we saw that the usual $j$-translation $\extjtrans{\varphi}$ is exactly related to the internal logic of the subtopos $\SubtoposjE$ via the $\jshfunc$-translation (Proposition~\ref{prop:shvsext}).
Similarly, we expect that any $\langX$-sentence $\varphi$ induces a subterminal $\subinp{\varphi}_{\GrothcovP} \rightarrowtail 1$ in $\ShtopP$, 
and that $j\forceinP\varphi$ in $\CatE$ describes $\rest{\subinp{\varphi}_{\GrothcovP}}{j}$ in $\ShtopP$.
Through this correspondence, categorical observations on $\ShtopP$ could also yield the results established in this section, such as the soundness for Heyting arithmetic.
However, we leave further investigation of this topos for future work.
We will briefly discuss future problems concerning our translation and its relation to this topos in Section~\ref{sec:conclusion}.

\section{The corresponding realizability semantics}\label{sec:realizability}

In this section, we interpret the formula $j\forceinP\varphi$ in the effective topos $\Eff$.
Recall that a formula $\varphi[x]$ in the internal language of a topos induces a subobject $\subinp{\varphi}\rightarrowtail X$.
In $\Eff$, such a subobject can be described as a $\powN$-valued function $\intinEff{\varphi}\colon \undset{X} \to \powN$ on a set $\undset{X}$.
Our first goal is to provide an explicit and concise description of the $\powN$-valued function $\intinEff{j\forceinP\varphi}$ for an $\Arithl$-formula $\varphi$. 

To this end, in Section~\ref{subsec:preliminariesEff}, we review the basics of the effective topos.
Especially, we explain the structure of power objects in details to clarify the internal object $\LopFE$ of all lop-frames.
In Section~\ref{subsec:Preal}, we provide a concrete description of $\intinEff{j\forceinP\varphi}$, which we call $\PosetP$-realizability.
We then elucidate the properties of $\PosetP$-realizability when the lop-frame $\PosetP$ is standard.
Finally, in Section~\ref{subsec:deJonghGoodman}, based on a uniform construction for local operators, 
we show that de Jongh-Goodman realizability is subsumed by our $\PosetP$-realizability.

It should be noted that most of the arguments below can be easily extended to the relative realizability topos over a relative partial combinatory algebra.

\subsection{Preliminaries on $\Eff$}\label{subsec:preliminariesEff}
We begin by introducing the notation for the effective tripos $X \mapsto (\powN^X, \leqEff)$.
We refer readers interested in the basics of the effective topos to the standard text \cite{vOostenbook}.

\begin{definition}\label{def:intlanginEff}
    We fix a primitive recursive pairing function $\langle -, -\rangle\colon \N^{2}\to \N$.
    For natural numbers $e$ and $n$, we write $e\Klapp n$ for the result of applying the $e$-th partial computable function to $n$, 
    and write $e\Klapp n \downarrow$ if the computation terminates.
    A $\powN$-valued function $\Phi\colon X\to \powN$ on a set $X$ is called a \emph{predicate (on $X$)}. 
    Given a set $X$, we define the following logical operations on the set of predicates on $X$:
    for any $\Phi(x)$, $\Psi(x)\colon X\to \powN$, 
    \[\begin{aligned}
        \Phi\land\Psi (x) &\mycoloneqq \{\, \langle n, m\rangle \mid n\in \Phi(x) \text{ and } m\in \Psi(x) \,\}; \\
        \Phi\lor\Psi (x) &\mycoloneqq \{\, \langle 0, n\rangle \mid n\in \Phi(x) \,\} \cup \{\, \langle 1, m\rangle \mid m\in \Psi(x) \,\}; \\
        \Phi\rightarrow\Psi (x) &\mycoloneqq \{\, e \mid \forall n\in \Phi(x). (e\Klapp n\downarrow \text{ and } e\Klapp n\in \Psi(x)) \,\}.
    \end{aligned}\]
    We write $\top(x)\mycoloneqq \N$, $\bot(x)\mycoloneqq \emptyset$, $\lnot\Phi \mycoloneqq \Phi\rightarrow \bot$, and
    $(\Phi\leftrightarrow\Psi) \mycoloneqq (\Phi\rightarrow\Psi) \land (\Psi\rightarrow\Phi)$.
    The same operations are introduced on $\powN$. 

    A \emph{realizer of $\Phi(x)$} is a natural number contained in $\Phi(x)$.
    We say that \emph{$\Phi(x)$ is realizable} if $\Phi(x)\neq\emptyset$.
    We define the preorder predicate $(\Phi \leqEff \Psi) \mycoloneqq \bigcap_{x\in X} (\Phi \rightarrow \Psi) (x)$, 
    and the equivalence predicate $(\Phi \eqEff \Psi) \mycoloneqq \bigcap_{x\in X} (\Phi \leftrightarrow \Psi) (x)$ between predicates on $X$.
    If $(\Phi \eqEff \Psi)$ (resp. $(\Phi \leqEff \Psi)$) is realizable, we simply write $\Phi \eqEff \Psi$ (resp. $\Phi \leqEff \Psi$).
    We say that \emph{$\Phi$ and $\Psi$ are equivalent} if $\Phi \eqEff \Psi$.
\end{definition}

We next recall the objects in $\Eff$.
An object $X$ in $\Eff$ is a pair $(\undset{X}, =_{X})$ of a set $\undset{X}$ and a partial equivalence relation $=_{X}\colon \undset{X}\times \undset{X} \to \powN$
(see \cite[Section~3.1]{vOostenbook}).
For instance, the subobject classifier is given by $\Omega \mycoloneqq (\OmegaEff, =_{\Omega})$ with $(p=_{\Omega} q) \mycoloneqq (p\leftrightarrow q)$.
The natural numbers object is defined as $N \mycoloneqq (\N, =_{N})$, where $(n =_{N} m) \mycoloneqq \{\, n\,\}$ if $n=m$ and $\mycoloneqq \emptyset$ otherwise.

Let us clarify the descriptions of an internal object $\LopE$ of local operators and an internal object $\LopFE$ of lop-frames in $\Eff$.
To do so, we briefly recall the construction of power objects in $\Eff$.
The power object $PX$ of an object $X = (\undset{X}, =_{X})$ in $\Eff$ is defined by the object $({\OmegaEff}^{\undset{X}}, =_{PX})$ of strict relational predicates with respect to $=_{X}$
(see \cite[page.~70]{vOostenbook}).
Furthermore, if $X$ is a uniform object (such as $\Omega$ or $P\Omega$), the strictness condition can be omitted (\cite[Proposition~3.2.6]{vOostenbook}).
\begin{definition}\label{def:PXinEff}
    Let $X = (\undset{X}, =_{X})$ be an object in $\Eff$.
    We introduce the predicate $\Rel{X}$ on $\powN^{\undset{X}}$ as follows: 
    \[\Rel{X}(\Phi) \mycoloneqq \bigcap_{x, y\in \undset{X}} (\Phi(x) \land (x=_{X} y) \rightarrow \Phi(y)).\]
    We say that a predicate $\Phi$ on $\undset{X}$ is \emph{$=_{X}$-relational} if $\Rel{X}(\Phi)$ is realizable.
    Similarly, for $\eqEff$ in Definition~\ref{def:intlanginEff}, 
    we define the predicate $\mathrm{Rel}_{\eqEff}$ and the notion of \emph{$\eqEff$-relationality}.
\end{definition}

\begin{remark}\label{rem:PXinEff}
    The power object of $\Omega$ is defined by $P\Omega \mycoloneqq (\OmegaEff^{\OmegaEff}, =_{P\Omega})$, where $(j =_{P\Omega} k) \mycoloneqq \Rel{\Omega}(j) \land (j\eqEff k)$.
    This implies a one-to-one correspondence (up to equivalence) between global sections of $P\Omega$ and $=_{\Omega}$-relational predicates on $\OmegaEff$.
    Similarly, the power object of $P\Omega$ is defined by $P(P\Omega) \mycoloneqq (\OmegaEff^{(\OmegaEff^{\OmegaEff})}, =_{P(P\Omega)})$, where $(\PosetP =_{P(P\Omega)} \PosetQ) \mycoloneqq \Rel{P\Omega}(\PosetP) \land (\PosetP\eqEff \PosetQ)$.
    Thus, there is a one-to-one correspondence (up to equivalence) between global sections of $P(P\Omega)$ and $=_{P\Omega}$-relational predicates on $\OmegaEff^{\OmegaEff}$.
    The membership relation is interpreted as $\intinEff{j \in_{} \PosetP} \mycoloneqq \PosetP(j)$.
\end{remark}

Let us recall that $\LopE$ and $\LopFE$ were defined as subobjects 
$\subinp{\isLopE[j]}\rightarrowtail P\Omega$ (Remark~\ref{rem:POmegavslop}) and $\subinp{\PosetP\subLopE}\rightarrowtail P(P\Omega)$ (Remark~\ref{rem:POmegavslop}), respectively.
These definitions imply that local operators and lop-frames in $\Eff$ correspond to specific predicates, as below.
\begin{definition}\label{def:LopinEff}
    For predicates $j$, $k\colon \OmegaEff\to\OmegaEff$, we define: 
    \[\begin{aligned}
        \intinEff{\isLopE[j]} &\mycoloneqq \bigcap_{p\subseteq \N} (p \rightarrow j(p)) \land 
        \bigcap_{p\subseteq \N} (jj(p) \rightarrow j(p)) \\
        &\land \bigcap_{p, q\subseteq \N} ((p\rightarrow q) \rightarrow (j(p)\rightarrow j(q))), \\
        (j =_{\LopE} k) &\mycoloneqq \intinEff{\isLopE[j]} \land (j \eqEff k).
    \end{aligned}\]
    A predicate $j$ on $\OmegaEff$ is called a \emph{local operator in the effective topos} if $\intinEff{\isLopE[j]}$ is realizable.
    We write $\LopEff$ for the set of all local operators in $\Eff$.
    The standard order between local operators is interpreted as $\intinEff{j\leq k} \mycoloneqq (j \leqEff k)$.

    A local operator equivalent to the largest element $\top(p)\mycoloneqq \N$ in the preordered set $(\LopEff, \leqEff)$
    is called \emph{degenerate}.
    In addition, the \emph{double negation} operator is interpreted as follows: $\lnot\lnot(p) \mycoloneqq \N$ if $p\neq \emptyset$ and $\mycoloneqq \emptyset$ otherwise.
    We say that $j$ is \emph{dense} if $j\leqEff \lnot\lnot$, and write $\DLopEff$ for the set of all dense local operators in $\Eff$.
\end{definition}

\begin{remark}\label{rem:lopinEff}
    As is well known, the following are equivalent in $\Eff$: 
    (1) $j$ is non-degenerate; 
    (2) $j$ is dense; 
    (3) for any $p\subseteq\N$, $j(p)\neq \emptyset$ if and only if $p\neq \emptyset$.
\end{remark}

\begin{definition}\label{def:LopFinEff}
    For predicates  $\PosetP$, $\PosetQ \colon \OmegaEff^{\OmegaEff} \to \OmegaEff$, we define: 
    \[\begin{aligned}
        \intinEff{\PosetP\subLopE} &\mycoloneqq \bigcap_{j\in \OmegaEff^{\OmegaEff}} (\PosetP(j) \rightarrow \intinEff{\isLopE[j]}), \\
        (\PosetP =_{\LopFE} \PosetQ) &\mycoloneqq \intinEff{\PosetP\subLopE} \land \mathrm{Rel}_{\eqEff}(\PosetP) \land (\PosetP \eqEff \PosetQ).
    \end{aligned}\]
    A predicate $\PosetP$ on $\OmegaEff^{\OmegaEff}$ is called a \emph{lop-frame in the effective topos} if it is $\eqEff$-relational and $\intinEff{\PosetP\subLopE}$ is realizable.
\end{definition}

In summary, the internal objects $\LopE$ and $\LopFE$ is given by the following:
\[\LopE \mycoloneqq (\OmegaEff^{\OmegaEff}, =_{\LopE}), \quad \LopFE \mycoloneqq (\OmegaEff^{(\OmegaEff^{\OmegaEff})}, =_{\LopFE}).\] 

\begin{remark}\label{rem:relationality}
    Since $\intinEff{\isLopE[j]} \leqEff \Rel{\Omega}(j)$ holds, all local operators in $\Eff$ are $=_{\Omega}$-relational.
    Therefore, as noted in Remark~\ref{rem:PXinEff}, each local operator $j$ corresponds to a closed term of type $P\Omega$ that satisfies $\isLopE[j]$ internally.
    This characterization was originally given by Pitts \cite{PittsPhD}.
    Compared to this, each lop-frame $\PosetP$ in $\Eff$ corresponds to a closed term of type $P(P\Omega)$ that satisfies $\PosetP\subLopE$ internally.
\end{remark}

We conclude this subsection by noting that each lop-frame in $\Eff$ can be constructed as an $\eqEff$-relational predicate on $\OmegaEff^{\OmegaEff}$ weighted by $\intinEff{\isLopE[j]}$.
\begin{proposition}\label{prop:weighted} 
    Let $\PosetP\colon \OmegaEff^{\OmegaEff}\to\OmegaEff$ be an $\eqEff$-relational predicate.
    Then the predicate $\Lopclosure{\PosetP}$ defined as follows is a lop-frame in $\Eff$:
    \[\Lopclosure{\PosetP}(j) \mycoloneqq \PosetP(j) \land \intinEff{\isLopE[j]}.\]
    Conversely, any lop-frame in $\Eff$ is equivalent to a predicate of this form.
\end{proposition}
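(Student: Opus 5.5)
The plan is to verify the two defining conditions of a lop-frame in $\Eff$ (Definition~\ref{def:LopFinEff}) for $\Lopclosure{\PosetP}$ directly, by exhibiting uniform realizers. Then, for the converse, I will show that any lop-frame $\PosetP$ is equivalent to $\Lopclosure{\PosetP}$ in the sense of $\eqEff$.

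\emph{Realizability of $\intinEff{\Lopclosure{\PosetP}\subLopE}$.} This is immediate. An element of $\Lopclosure{\PosetP}(j)$ is a pair $\langle n, m\rangle$ with $m\in\intinEff{\isLopE[j]}$, so the index of the second projection realizes $\Lopclosure{\PosetP}(j)\rightarrow\intinEff{\isLopE[j]}$ uniformly in $j$. Hence it lies in the intersection over all $j\in\OmegaEff^{\OmegaEff}$.

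\emph{$\eqEff$-relationality.} Since $\PosetP$ is $\eqEff$-relational by hypothesis, it suffices to show that $\intinEff{\isLopE[j]}$ is itself $\eqEff$-relational in $j$. Concretely, I need a single index $d$ that sends a realizer of $\intinEff{\isLopE[j]}$ together with a realizer of $(j\eqEff k)$ to a realizer of $\intinEff{\isLopE[k]}$. The required realizer for $\Lopclosure{\PosetP}$ then acts componentwise on the pair.

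Fix a realizer $\langle a, b, c\rangle$ of the three clauses of $\isLopE[j]$ and a realizer $\langle e, e'\rangle$ of $j\eqEff k$. Here $e\in\bigcap_p(j(p)\rightarrow k(p))$ and $e'\in\bigcap_p(k(p)\rightarrow j(p))$. The three clauses for $k$ are obtained as follows.
\begin{enumerate}
    \item Inflationarity: compose $e$ with $a$.
    \item Idempotence, $kk(p)\rightarrow k(p)$: given $x\in k(k(p))$, apply $e'$ to get an element of $j(k(p))$. Feed $e'$ itself, viewed as an element of $k(p)\rightarrow j(p)$, into $c$, obtaining a map $j(k(p))\rightarrow j(j(p))$. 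Then apply $b$ and finally $e$.
    \item Monotonicity: given $f\in(p\rightarrow q)$, $c\Klapp f$ maps $j(p)\rightarrow j(q)$, so conjugating as $e\circ(c\Klapp f)\circ e'$ gives $k(p)\rightarrow k(q)$.
\end{enumerate}
All these compositions are uniform in $p,q,j,k$, so by the $s$-$m$-$n$ theorem they are given by a single computable $d$. The main point requiring care is the idempotence clause. There one must notice that the uniform realizer $e'$ of $k\leqEff j$ can be passed to $c$ as a realizer of $k(p)\rightarrow j(p)$ for each fixed $p$; this is exactly what makes the argument go through.

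\emph{Converse.} Let $\PosetP$ be a lop-frame in $\Eff$, and let $r$ realize $\bigcap_j(\PosetP(j)\rightarrow\intinEff{\isLopE[j]})$. Then $n\mapsto\langle n, r\Klapp n\rangle$ realizes $\PosetP\leqEff\Lopclosure{\PosetP}$, and the first projection realizes $\Lopclosure{\PosetP}\leqEff\PosetP$. Thus $\PosetP\eqEff\Lopclosure{\PosetP}$, and $\PosetP$ is equivalent to a predicate of the stated form, with the $\eqEff$-relational predicate taken to be $\PosetP$ itself.
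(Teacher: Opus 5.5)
Your proof is correct and follows the paper's route: the paper's one-line proof reduces the statement to the $\eqEff$-relationality of $\intinEff{\isLopE[j]}$, and you verify that fact with explicit uniform realizers. Your idempotence step, which feeds $e'$ into the monotonicity realizer $c$, checks out, and the remaining conditions and the converse are the routine pairing and projection arguments you give.
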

\begin{proof}
    This claim follows from the fact that the predicate $\intinEff{\isLopE[j]}$ is $\eqEff$-relational.
\end{proof}

\subsection{$\PosetP$-realizability}\label{subsec:Preal}
Given a lop-frame $\PosetP$ and a local operator $j$ in $\Eff$, 
the translation $j\forceinP\varphi$ for an $\Arithl$-formula $\varphi[{x}]$ induces a predicate $\Prealat{j}{\varphi}\colon \N\to\powN$.
Based on the Kuroda-style translation, we provide the following description.

\begin{definition}\label{def:Preal}
    Let $\PosetP$ be a lop-frame and $j$ be a local operator in $\Eff$.
    For an $\Arithl$-formula $\varphi[\vec{x}]$,
    we inductively define the predicate $\Prealat{j}{\varphi} \colon \N^{m}\to\powN$ as follows:
    \[\begin{aligned}
        \Prealat{j}{(f(\vec{x}) = g(\vec{x}))} &\mycoloneqq 
            \begin{cases}
                \N & \text{ if } (f(\vec{n}) = g(\vec{n})) \text{ holds in }\N \\
                \emptyset & \text{ otherwise; }
            \end{cases} \\
        \Prealat{j}{(\varphi\land \psi)} &\mycoloneqq \Prealat{j}{\varphi} \land \Prealat{j}{\psi}; \\
        \Prealat{j}{(\varphi\lor \psi)} &\mycoloneqq \Prealat{j}{\varphi} \lor \Prealat{j}{\psi}; \\
        \Prealat{j}{(\varphi\rightarrow \psi)} &\mycoloneqq \\
        \bigcap_{k\in\OmegaEff^{\OmegaEff}} &( \PosetP(k)\land \intinEff{j\leq k} \land \Prealat{k}{\varphi} \rightarrow k(\Prealat{k}{\psi} ) ); \\
        \Prealat{j}{(\exists y.\varphi[\vec{x}, y])} (\vec{n}) &\mycoloneqq \bigcup_{m\in\N} ( \{\, m\,\} \land \Prealat{j}{\varphi} (\vec{n}, m) ); \\
        \Prealat{j}{(\forall y.\varphi[\vec{x}, y])} (\vec{n}) &\mycoloneqq \\
        \bigcap_{k\in\OmegaEff^{\OmegaEff}} \bigcap_{m\in\N} &( \PosetP(k)\land \intinEff{j\leq k} \land \{\, m\,\} \rightarrow k(\Prealat{k}{\varphi} (\vec{n}, m)) ).
    \end{aligned}\]
    For an $\Arithl$-sentence $\varphi$, 
    we say that $\varphi$ is \emph{$\PosetP$-realizable at $j$} if $\Prealat{j}{\varphi}$ is realizable.
\end{definition}

The predicate $\Prealat{j}{\varphi}$ corresponds to the subobject induced by the Kuroda-style translation $j\KforceinP\varphi$ defined in Section~\ref{subsec:Kuroda}.
As noted in Remark~\ref{rem:lopinEff}, if $j$ is non-degenerate, the following equivalence holds externally:
\[\Eff\models j(j\KforceinP\varphi) \iff j(\Prealat{j}{\varphi})\neq\emptyset \iff \Prealat{j}{\varphi}\neq\emptyset.\]
Thus, Proposition~\ref{prop:KvsGG} implies that the validity of the G\"odel-Gentzen style translation $j\forceinP\varphi$ in $\Eff$ coincides with $\PosetP$-realizability at $j$
as long as $j$ is non-degenerate.
\begin{theorem}\label{thm:realsound}
    Let $\PosetP$ be a lop-frame and $j$ be a non-degenerate local operator in $\Eff$.
    Then, for any $\Arithl$-formula $\varphi[\vec{x}]$ and any list $\vec{n}$ of natural numbers, 
    the following equivalence holds:
    \[\Eff\models (j\forceinP(\varphi[\vec{n}])) \iff \Prealat{j}{\varphi} (\vec{n}) \neq \emptyset,\]
    where $\varphi[\vec{n}]$ denotes the sentence obtained by substituting the numerals $\vec{n}$ into $\varphi$.
\end{theorem}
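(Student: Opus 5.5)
Our approach is to pass through the Kuroda-style variant of Section~\ref{subsec:Kuroda}. We first show that $\Prealat{j}{\varphi}$ is the interpretation in $\Eff$ of $j\KforceinP\varphi$. Then Proposition~\ref{prop:KvsGG} and Remark~\ref{rem:lopinEff} finish the argument. Concretely, the central lemma is: for every $\Arithl$-formula $\varphi[\vec{x}]$, uniformly in the lop-frame $\PosetP$ and in $j\in\OmegaEff^{\OmegaEff}$ with $\intinEff{\isLopE[j]}$ realizable, the predicate $\intinEff{j\KforceinP\varphi}$ on $\N^{m}$ is equivalent (in the sense of $\eqEff$) to $\Prealat{j}{\varphi}$.

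We prove the lemma by induction on $\varphi$, using the standard clauses for interpreting the internal language in the effective tripos.

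Atomic formulas $f(\vec{x})=g(\vec{x})$ in $N$ are interpreted as $\N$ or $\emptyset$ up to equivalence, since $=_{N}$ is a singleton-or-empty relation. The connectives $\land$ and $\lor$ go through directly. For $\exists y$, the interpretation of the internal existential over $N$ is $\bigcup_{m}(E(m)\land\Phi(\vec{n},m))$ with $E(m)=\{m\}$, which matches the definition.

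For $\rightarrow$ and $\forall$, the bounded quantifier $\forall k\geqinP{j}$ unfolds to an intersection over $k\in\OmegaEff^{\OmegaEff}$ of $(E(k)\land\PosetP(k)\land\intinEff{j\leq k}\land\dots)\rightarrow\dots$, where the existence predicate $E(k)=(k=_{P\Omega}k)$ is $\Rel{\Omega}(k)$. This extra conjunct $\Rel{\Omega}(k)$ does not appear in Definition~\ref{def:Preal}. We remove it using Remark~\ref{rem:relationality}: $\PosetP(k)$ realizes $\intinEff{\isLopE[k]}$, which in turn uniformly realizes $\Rel{\Omega}(k)$, so a realizer for one form converts into one for the other. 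The membership $k(\cdot)$ is interpreted as $\PosetP$-style application, $\intinEff{p\in k}=k(p)$. The induction hypothesis at $k$ is then transported inside $k(\cdot)$. Here we use that every local operator is $=_{\Omega}$-relational, uniformly in a realizer of $\isLopE[k]$, which is again supplied by $\PosetP(k)$.

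The main obstacle is uniformity. All realizers produced in the induction must be computable independently of $k$ and of the parameters. The bookkeeping for the $\rightarrow$ and $\forall$ cases is therefore the delicate part. Since everything reduces to fixed realizers for the $\isLopE$ axioms and for relationality, obtained from the given data, this should work.

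Given the lemma, we conclude as follows. By Proposition~\ref{prop:KvsGG}, $\Eff\models j\forceinP\varphi[\vec{n}]$ if and only if $\Eff\models j(j\KforceinP\varphi[\vec{n}])$. Since numerals are global sections of $N$, this holds if and only if $j(\Prealat{j}{\varphi}(\vec{n}))$ is realizable, i.e.\ nonempty. Finally, because $j$ is non-degenerate, Remark~\ref{rem:lopinEff}(3) gives $j(p)\neq\emptyset$ if and only if $p\neq\emptyset$, which yields $\Prealat{j}{\varphi}(\vec{n})\neq\emptyset$.
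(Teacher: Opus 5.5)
Your proposal is correct and follows the paper's own argument. You identify $\Prealat{j}{\varphi}$ with the interpretation of the Kuroda-style $j\KforceinP\varphi$, then combine Proposition~\ref{prop:KvsGG} with non-degeneracy via Remark~\ref{rem:lopinEff}(3). Your induction supplies the step the paper only asserts: it drops the existence conjunct of $k$ via $\PosetP(k)\rightarrow\intinEff{\isLopE[k]}\rightarrow\Rel{\Omega}(k)$ and uses the uniform relationality and monotonicity of $k$.

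One small precision: the equivalence in your central lemma should be taken relative to the existence predicate of $N^{m}$. With strict representatives, the interpretation of an atomic $f(\vec{x})=g(\vec{x})$ is not uniformly $\eqEff$ to $\N$. This is harmless, because the $\exists$ and $\forall$ clauses supply $\{m\}$ and the theorem only concerns fixed numerals.
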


As a corollary of Theorem~\ref{thm:soundness} and \ref{thm:soundnessHA},
$\PosetP$-realizability is sound for Heyting arithmetic.
Furthermore, recalling Corollary~\ref{cor:jinPconstantdomain}, 
we can simplify the clause for universal quantification if $\PosetP(j) = \intinEff{j\in_{} \PosetP}$ is realizable.

\begin{corollary}\label{cor:constinEff}
    Suppose that a lop-frame and a non-degenerate local operator in $\Eff$
    satisfy $\PosetP(j)\neq\emptyset$.
    Then, for any $\Arithl$-formula $\varphi[{y}]$, the following equivalence holds:
    \[\Prealat{j}{(\forall {y}. \varphi[{y}])} \eqEff \bigcap_{m\in\N} (\{\, m\,\} \rightarrow j(\Prealat{j}{\varphi} ({m}))). \]
\end{corollary}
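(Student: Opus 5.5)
The plan is to read Corollary~\ref{cor:jinPconstantdomain} in $\Eff$, using the dictionary between the Kuroda-style translation and $\PosetP$-realizability. The simple direction will be done by hand and the other will be obtained by interpreting the internal results. Throughout, fix a realizer $r\in\PosetP(j)$ and a realizer $i$ of $\intinEff{j\leq j}$, for instance a code of the identity.

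\textbf{Direction $\leqEff$.} Let $e$ realize the left-hand side $\Prealat{j}{(\forall y.\varphi[y])}$. Instantiating the intersection at $k\mycoloneqq j$, the element $e\Klapp\langle\langle r,i\rangle,m\rangle$ lies in $j(\Prealat{j}{\varphi}(m))$ for every $m$. So an index for $\lambda m.\, e\Klapp\langle\langle r,i\rangle,m\rangle$, computable uniformly from $e$, realizes the right-hand side. Only the realizability of $\PosetP(j)$ is used here.

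\textbf{Direction $\geqEff$.} Given a realizer $d$ of $\bigcap_{m}(\{m\}\rightarrow j(\Prealat{j}{\varphi}(m)))$, I must produce, uniformly in $k$ and $m$, a map sending $\PosetP(k)\land\intinEff{j\leq k}\land\{m\}$ into $k(\Prealat{k}{\varphi}(m))$. The natural tool is monotonicity. I will interpret Lemma~\ref{lem:monotonicity}, combined with Proposition~\ref{prop:KvsGG}, in $\Eff$. Together they give that
\[\forall k\in\PosetP.\, (j\leq k\rightarrow \forall y:N.\, (j(j\KforceinP\varphi[y])\rightarrow k(k\KforceinP\varphi[y])))\]
holds in $\Eff$. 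Since $\Eff$ validates a formula exactly when it has a single realizer, this yields one number $c$. Given realizers $a\in\PosetP(k)$, $b\in\intinEff{j\leq k}$ and the number $m$, the realizer $c$ sends an element of $j(\Prealat{j}{\varphi}(m))$ to an element of $k(\Prealat{k}{\varphi}(m))$. Composing with $d\Klapp m$ gives the required realizer, uniformly in $d$.

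\textbf{Main obstacle.} The delicate point is the identification of the interpretation $\intinEff{k\KforceinP\varphi}$, taken as a predicate in the variables $\PosetP,k,\vec{x}$, with the explicit predicate $\Prealat{k}{\varphi}$. The equivalence must be witnessed by one realizer uniform in $k$, not merely one realizer for each $k$; otherwise the internal validity above cannot be transferred to Definition~\ref{def:Preal}. I would establish this by induction on $\varphi$, checking each clause of Definition~\ref{def:Preal} against the standard realizability interpretation of the internal connectives and of quantifiers over $P\Omega$ and $N$. The cases $\rightarrow$ and $\forall$ involve intersections over $\OmegaEff^{\OmegaEff}$, and there the realizers are clearly independent of $k$.

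\textbf{Fallback.} If this uniform identification turns out to be awkward, I would instead build the monotonicity realizer $j(\Prealat{j}{\varphi})\rightarrow k(\Prealat{k}{\varphi})$ directly, by induction on $\varphi$. For this I would use the realizers of $\intinEff{\isLopE[k]}$ extracted from $\PosetP(k)$ via $\intinEff{\PosetP\subLopE}$, together with the composition $j\leq k$, $k k\leq k$.
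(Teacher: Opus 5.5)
Your proposal is correct and is essentially the paper's argument. The paper obtains this corollary by reading Corollary~\ref{cor:jinPconstantdomain} in $\Eff$ through the Kuroda-style description of Definition~\ref{def:Preal} together with Proposition~\ref{prop:KvsGG}, and that corollary rests on exactly your two ingredients: instantiation at $k=j$ using $j\in\PosetP$ for one direction, and the monotonicity of Lemma~\ref{lem:monotonicity} for the other. The paper likewise takes for granted the uniform identification of $\intinEff{k\KforceinP\varphi}$ with $\Prealat{k}{\varphi}$ that you flag as the main obstacle, and your inductive check fills it in correctly, with $\PosetP(k)$ supplying realizers of $\isLopE[k]$ and hence of relationality and of the existence predicate of $k$.
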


Observe that the usual $j$-translation $\extjtrans{\varphi}$ also corresponds to the predicate $\jreal{\varphi}\colon \N^{m}\to\powN$ defined by:
\[\jreal{(\varphi\rightarrow \psi)} \mycoloneqq \jreal{\varphi} \rightarrow j(\jreal{\psi}), \quad \jreal{(\forall {y}. \varphi[{y}])} \mycoloneqq \bigcap_{m\in\N} (\{\, m\,\} \rightarrow j\jreal{\varphi} ({m})).\]
The other steps are defined analogously to Definition~\ref{def:Preal}.
This predicate is clearly the set of realizers for the \emph{$j$-realizability} found in the literature \cite{Kihara24rethinking, LeevOosten13, vOosten14}.
Henceforth, we also say that $\varphi$ is \emph{$j$-realizable} if $\jreal{\varphi}$ is realizable.

Next, let us consider a simple example of lop-frame.
Given a subset $S\subseteq \LopEff$ externally, we can naturally define a lop-frame associated with it.

\begin{definition}\label{def:stuni}
    For a subset $S\subseteq\LopEff$ and a local operator $j\in\LopEff$, 
    \begin{itemize}
        \item we define the following lop-frame $\stlop{S}$: 
        \[\stlop{S}(k) \mycoloneqq \bigcup_{\ell \in S} (\ell =_{\LopE} k).\]
        We call $\stlop{S}$ the \emph{standard lop-frame for $S$}.

        \item let $\upset{j}\mycoloneqq \{\, k\in \LopEff \mid j \leqEff k\,\}$ be the principal upper set of $j$ in $(\LopEff, \leqEff)$.
        We say that $S$ is \emph{uniform} if there exist realizers of $\isLopE[j]$ and $j\leqEff k$ for all local operators in $S$, uniformly.
        That is, the following conditions hold:
        \[\bigcap_{k\in S} \intinEff{\isLopE[k]}\neq \emptyset, \qquad \bigcap_{j\in S}\bigcap_{\, k\in S\cap \upset{j}} \intinEff{j\leq k}\neq \emptyset.\]

        \item $j$ is \emph{maximal in $S$} if $j\in S$ and $k \eqEff j$ hold for any $k\in S\cap \upset{j}$.
        
        \item $j$ is \emph{minimum in $S$} if $j\in S$ and $S\subseteq \upset{j}$ hold. \qedhere
    \end{itemize}
\end{definition}

The primary reason why we call $\stlop{S}$ standard is that 
the internal universal quantification over $\stlop{S}$ corresponds to the external universal quantification over $S$, as shown below.
The same applies to the property that $S$ is a subset of dense local operators.
\begin{lemma}\label{lem:stlop}
    Let $S$ be a subset of $\LopEff$ and let $j$ be a local operator.
    \begin{enumerate}
        \item Suppose that a predicate ${\Phi(k)}$ on $\OmegaEff^{\OmegaEff}$ is $\eqEff$-relational.
        Then, the following equivalence holds:
        \[\bigcap_{k\in \OmegaEff^{\OmegaEff}} (\stlop{S}(k)\land \intinEff{j\leq k} \rightarrow \Phi(k)) \eqEff \bigcap_{\ell \in S} (\intinEff{\isLopE[\ell]}\land \intinEff{j\leq \ell} \rightarrow \Phi[\ell]). \]

        \item $S\subseteq\DLopEff$ holds if and only if $\Eff\models (\stlop{S} \subDLopE)$ holds.
    \end{enumerate}
\end{lemma}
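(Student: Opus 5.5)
For part (1), I will construct realizers in both directions uniformly in $j$, using the shape of $\stlop{S}(k) = \bigcup_{\ell\in S} (\ell =_{\LopE} k)$. A realizer of $(\ell =_{\LopE} k)$ is a pair consisting of a realizer of $\intinEff{\isLopE[\ell]}$ and a realizer of $\ell \eqEff k$. Let $r$ realize $\mathrm{Rel}_{\eqEff}(\Phi)$. Since $r$ is independent of $k$, we get a computable transport $\Phi(\ell)\land(\ell\eqEff k)\to\Phi(k)$ and its converse, uniformly in $\ell,k$.

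For the forward direction ($\leqEff$), suppose $e$ realizes the left-hand intersection. Given $\ell\in S$ and a pair $\langle a,b\rangle$ with $a\in\intinEff{\isLopE[\ell]}$ and $b\in\intinEff{j\leq\ell}$, instantiate $k:=\ell$. Then $\langle a, c\rangle$ with $c$ a fixed realizer of $\ell\eqEff\ell$ (the identity pair) lies in $\stlop{S}(\ell)$. Applying $e$ to $\langle\langle a,c\rangle,b\rangle$ gives an element of $\Phi(\ell)$. For the backward direction, suppose $e$ realizes the right-hand side. Given $k$ and a realizer of $\stlop{S}(k)\land\intinEff{j\le k}$, we obtain some $\ell\in S$, $a\in\intinEff{\isLopE[\ell]}$, a witness $d$ of $\ell\eqEff k$, and $b\in\intinEff{j\le k}$. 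Composing $b$ with the $k\to\ell$ half of $d$ gives a realizer of $j\le\ell$. Applying $e$ yields $\Phi(\ell)$, and transporting along $d$ with $r$ yields $\Phi(k)$. Because realizers of unions carry no information about which $\ell$ was used, the construction must never depend on $\ell$. I need to check this: every step above uses only the given numbers, so the construction is uniform. This uniformity bookkeeping is the only delicate point in part (1).

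For part (2), I read $\stlop{S}\subDLopE$ as $\forall k\in\stlop{S}.\,(k\leq\lnot\lnot)$, with $\DLopE$ defined analogously to $\LopE$. Its interpretation is $\bigcap_k(\stlop{S}(k)\to\intinEff{k\le\lnot\lnot})$. Apply part (1) in the form with the $j\le k$ hypothesis removed; the same argument works, or one can take $j$ to be the identity operator, for which $j\le k$ is uniformly realized. The predicate $\Phi(k)=\intinEff{k\le\lnot\lnot}$ is $\eqEff$-relational. So validity is equivalent to realizability of $\bigcap_{\ell\in S}(\intinEff{\isLopE[\ell]}\to(\ell\leqEff\lnot\lnot))$. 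The main obstacle is that this demands a \emph{uniform} realizer, whereas $S\subseteq\DLopEff$ only gives $\ell\leqEff\lnot\lnot$ for each $\ell$ separately. I will resolve this using the special form of $\lnot\lnot$. By Remark~\ref{rem:lopinEff}, for a dense $\ell$, $\ell(p)\ne\emptyset$ implies $p\ne\emptyset$, so $\lnot\lnot(p)=\N$. Hence any number, for instance a code of a total function, realizes $\ell(p)\to\lnot\lnot(p)$ for all $p$ simultaneously. This gives a uniform realizer, and the forward implication follows. Conversely, from a realizer of the intersection I apply it to a realizer of $\isLopE[\ell]$, which exists for each $\ell\in S$, and obtain $\ell\leqEff\lnot\lnot$, so $\ell$ is dense.
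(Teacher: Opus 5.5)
Your proposal is correct and follows the paper's proof, written out at the level of explicit realizers. In (1) the paper likewise passes the union in $\stlop{S}(k)$ through the implication, uses $\eqEff$-relationality to move between $k$ and $\ell$, and relies on the $\ell$-independent identity realizer of $\ell \eqEff \ell$; in (2) it uses exactly your observation that a code of the identity uniformly realizes $\ell \leqEff \lnot\lnot$ for every dense $\ell$. (One small wording fix: not \emph{any} number realizes $\ell(p)\rightarrow\lnot\lnot(p)$, only codes of functions defined on all of $\ell(p)$, such as the total function you then name.)
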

\begin{proof}
    We show (1).
    By the definition of the standard lop-frame $\stlop{S}$, the left-hand side is equivalent to:
    \[\bigcap_{k\in \OmegaEff^{\OmegaEff}} \bigcap_{\ell \in S} ((\ell \eqEff k) \rightarrow (\intinEff{\isLopE[k]}\land \intinEff{j\leq k} \rightarrow \Phi(k))).\]
    Note that $\intinEff{j\leq k}$, $\intinEff{\isLopE[k]}$, and $\Phi(k)$ are $\eqEff$-relational.
    Using the fact $((\bigcup_{x} \Psi(x)) \rightarrow p) = \bigcap_{x} (\Psi(x) \rightarrow p)$, the above predicate is also equivalent to:
    \[\bigcap_{\ell \in S} ( (\bigcup_{k\in \OmegaEff^{\OmegaEff}} (\ell \eqEff k) ) \rightarrow (\intinEff{\isLopE[\ell]}\land \intinEff{j\leq \ell} \rightarrow \Phi(\ell)) ).\]
    Since $\bigcup_{k} (\ell \eqEff k)$ contains a code for the identity function on $\N$ which is independent of $\ell$,
    we obtain the right-hand side.

    For (2), if $\ell$ is a dense operator, the claim follows from the fact that $\intinEff{\ell\leq \lnot\lnot}$ contains a code for the identity function on $\N$.
\end{proof}

Moreover, assuming that $S$ is uniform, the equivalence in Lemma~\ref{lem:stlop} (1) can be simplified.
In addition, maximality and minimumness are also characterized. 
\begin{lemma}\label{lem:uniformstlop}
    Let $S$ be a uniform subset of $\LopEff$ and let $j$ be a local operator.
    \begin{enumerate}
        \item Suppose that a predicate ${\Phi(k)}$ on $\OmegaEff^{\OmegaEff}$ is $\eqEff$-relational.
        Then, the following equivalence holds:
        \[\bigcap_{k\in \OmegaEff^{\OmegaEff}} (\stlop{S}(k) \land \intinEff{j\leq k} \rightarrow \Phi[k]) \eqEff \bigcap_{\ell \in S\cap\upset{j}} \Phi[\ell]. \]

        \item $j$ is maximal in $S$ if and only if $\Eff\models \forall k\geq_{\stlop{S}} j. (j =_{P\Omega} k)$ holds.

        \item $j$ is minimum in $S$ if and only if $\Eff\models \forall k\in\stlop{S}. (j\leq k)$ holds.
    \end{enumerate}
\end{lemma}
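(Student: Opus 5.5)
We derive all three parts from Lemma~\ref{lem:stlop}~(1), using uniformity to remove the dependence of realizers on $\ell$. For (1), Lemma~\ref{lem:stlop}~(1) rewrites the left-hand side as $\bigcap_{\ell\in S}(\intinEff{\isLopE[\ell]}\land\intinEff{j\leq\ell}\rightarrow\Phi[\ell])$, so it suffices to compare this with $\bigcap_{\ell\in S\cap\upset{j}}\Phi[\ell]$.

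For the direction from right to left, take $r$ in the right-hand side. The constant program returning $r$ lies in $(\intinEff{\isLopE[\ell]}\land\intinEff{j\leq\ell})\rightarrow\Phi[\ell]$ for each $\ell\in S$. Indeed, if the antecedent has a realizer then $j\leqEff\ell$, so $\ell\in S\cap\upset{j}$ and $r\in\Phi[\ell]$; otherwise the implication holds vacuously.

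For the direction from left to right, the realizer must be fed data for every $\ell\in S\cap\upset{j}$, and this is where uniformity is needed. The first uniformity condition gives a fixed $a\in\bigcap_{k\in S}\intinEff{\isLopE[k]}$, and the second gives a fixed $b\in\bigcap_{\ell\in S\cap\upset{j}}\intinEff{j\leq\ell}$. Given $e$ in the left-hand side, $e\Klapp\langle a,b\rangle$ then lies in $\Phi[\ell]$ for all such $\ell$ simultaneously. Both maps are uniform in $e$ and $r$, which gives the required equivalence.

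\textbf{Main obstacle.} The delicate point is that $b$ must serve for $j$ itself, whereas the second condition as stated ranges over pairs $j',k$ with $j'\in S$. For (2) and (3) this is harmless, since there $j\in S$. For (1) with arbitrary $j$, I would read uniformity as also providing a common realizer of $j\leq\ell$ over $\ell\in S\cap\upset{j}$. If that is not available, I would restrict (1) to $j\in S$, which is the case actually used afterwards.

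For (2), the internal statement $\forall k\geq_{\stlop{S}} j.\,(j=_{P\Omega}k)$ is interpreted as $\bigcap_k(\stlop{S}(k)\land\intinEff{j\leq k}\rightarrow(j=_{P\Omega}k))$. The predicate $\Phi(k)=(j=_{P\Omega}k)$ is $\eqEff$-relational, so by (1) validity is equivalent to $\bigcap_{\ell\in S\cap\upset{j}}(j=_{P\Omega}\ell)\neq\emptyset$. By Remark~\ref{rem:relationality}, $\Rel{\Omega}(j)$ is realizable whenever $j$ is a local operator, so this set is nonempty iff $\bigcap_{\ell}(j\eqEff\ell)$ is realizable. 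Uniformity provides the realizers of $\ell\leqEff j$ for the needed direction, making the condition equivalent to $k\eqEff j$ for all $k\in S\cap\upset{j}$, i.e.\ maximality. The membership $j\in S$ required by maximality is part of the hypothesis implicit in both sides. For (3), apply Lemma~\ref{lem:stlop}~(1) with the trivial lower bound (the identity operator in place of $j$) and $\Phi(k)=\intinEff{j\leq k}$. Validity then amounts to $j\leqEff\ell$ for every $\ell\in S$ with a common realizer, which uniformity supplies once $j\in S$; this is $S\subseteq\upset{j}$.
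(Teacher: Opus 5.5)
Your argument is correct and follows the paper's route: rewrite via Lemma~\ref{lem:stlop}~(1), then use the uniform realizers (a common realizer of $\isLopE$ and, for $j\in S$, a common realizer of $j\leq\ell$) to pass between the two intersections, and reduce (2) and (3) to (1) plus uniformity of the order realizers. Your caveat is necessary, not just cautious. Taking $\Phi(k)=\intinEff{j\leq k}$ shows that (1) at a given $j$ forces $\bigcap_{\ell\in S\cap\upset{j}}\intinEff{j\leq\ell}\neq\emptyset$, which the stated uniformity only guarantees for $j\in S$; likewise the ``if'' directions of (2) and (3) need $j\in S$ as a standing hypothesis. The paper's one-line proof leaves both points implicit.
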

\begin{proof}
    (1) follows from the uniformity of $S$ and Lemma~\ref{lem:stlop} (1).
    (2) and (3) are easily verified from the uniformity for $j\leq k$ and Lemma~\ref{lem:uniformstlop} (1).
\end{proof}

Consequently, for a uniform subset $S$, 
the clause for implication in Definition~\ref{def:Preal} admits a simpler form.
Also, it follows from Lemma~\ref{lem:uniformstlop} (2) and Proposition~\ref{prop:maximal} that $\stlop{S}$-realizability at a maximal element $j$ in $S$ coincides with $j$-realizability.
\begin{corollary}\label{cor:stlopreal}
    Suppose that $S\subseteq\LopEff$ is uniform.
    Then, for the standard lop-frame $\stlop{S}$, the following equivalence holds: for any $j\in\LopEff$ and any $\Arithl$-formulas $\varphi$ and $\psi$
    \[\intinEff{j\Vdash_{\stlop{S}} (\varphi\rightarrow\psi)} \eqEff \bigcap_{k \in S\cap\upset{j}} (\intinEff{k\Vdash_{\stlop{S}} \varphi} \rightarrow k(\intinEff{k\Vdash_{\stlop{S}} \psi}) ).\]
    Furthermore, if $j$ is maximal in $S$, 
    the equivalence $\intinEff{j\Vdash_{\stlop{S}} \varphi} \eqEff \jreal{\varphi}$ holds.
\end{corollary}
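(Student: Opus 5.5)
The first claim follows by unfolding the definition of $\PosetP$-realizability and then applying Lemma~\ref{lem:uniformstlop}~(1). By Definition~\ref{def:Preal} with $\PosetP \mycoloneqq \stlop{S}$, the predicate $\intinEff{j\Vdash_{\stlop{S}}(\varphi\rightarrow\psi)}$ is literally
\[\bigcap_{k\in\OmegaEff^{\OmegaEff}} \bigl(\stlop{S}(k)\land\intinEff{j\leq k}\rightarrow \Phi(k)\bigr), \qquad \Phi(k) \mycoloneqq \intinEff{k\Vdash_{\stlop{S}}\varphi}\rightarrow k(\intinEff{k\Vdash_{\stlop{S}}\psi}).\]
Strictly, Definition~\ref{def:Preal} has $\stlop{S}(k)\land\intinEff{j\leq k}\land\Prealat{k}{\varphi}\rightarrow\cdots$. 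Currying is uniformly realizable, so this is equivalent to the displayed form. Once $\Phi$ is known to be $\eqEff$-relational, Lemma~\ref{lem:uniformstlop}~(1) rewrites the expression as $\bigcap_{\ell\in S\cap\upset{j}}\Phi(\ell)$, which is the claimed right-hand side.

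The real work is showing that $\Phi$ is $\eqEff$-relational, and I expect this to be the main obstacle. I would prove, by induction on $\varphi$, the auxiliary claim that $k\mapsto\Prealat{k}{\varphi}(\vec n)$ is $\eqEff$-relational, uniformly in $\vec n$.
\begin{itemize}
\item Atomic formulas, $\land$, $\lor$ and $\exists$: the predicate either does not depend on $k$ or is built pointwise from the induction hypothesis, so relationality is immediate.
\item The $\rightarrow$ and $\forall$ clauses: $k$ enters only through $\intinEff{k\leq k'}$, and $\intinEff{k\leq k'} = (k\leqEff k')$ is transported along $k\eqEff k''$ by composing realizers.
\end{itemize}
The one delicate point is the outer application $k(\cdots)$ in $\Phi$. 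Given $k\eqEff k'$, we have $k(p)\rightarrow k'(p)$ by definition of $\eqEff$. We also need monotonicity of $k'$ to move the inner $\Prealat{k}{\psi}$ to $\Prealat{k'}{\psi}$, and this requires a realizer of $\isLopE[k']$. In the intersection over $\ell\in S$ such a realizer is provided by the term $\intinEff{\isLopE[\ell]}$ appearing in Lemma~\ref{lem:stlop}~(1), or uniformly by the uniformity of $S$. Hence I would work with $\Phi$ weighted by $\intinEff{\isLopE[k]}$, and this weighted predicate is relational.

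Alternatively, internally the formula $k\forceinP\varphi$ is a term in the variable $k$, so it automatically respects $=_{P\Omega}$ by the soundness of the internal logic. This fact could be cited in place of the induction, the explicit induction just being its unfolding.

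For the second claim, let $j$ be maximal in $S$. Lemma~\ref{lem:uniformstlop}~(2) gives $\Eff\models\forall k\geq_{\stlop{S}} j.\,(j=_{P\Omega}k)$, and $j$ is a local operator, so Proposition~\ref{prop:maximal} yields $\Eff\models j\Vdash_{\stlop{S}}\varphi\leftrightarrow\extjtrans{\varphi}$. Interpreted in $\Eff$, this says that the predicates $\intinEff{j\Vdash_{\stlop{S}}\varphi}$ and $\intinEff{\extjtrans{\varphi}}$ are equivalent. It remains to identify $\intinEff{\extjtrans{\varphi}}$ with $\jreal{\varphi}$ up to $\eqEff$. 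Here "$\eqEff$" should be read modulo the same Kuroda-style identification used for $\Prealat{j}{\varphi}$ (cf. Proposition~\ref{prop:KvsGG} and its classical analogue for the plain $j$-translation), i.e. as $j(\cdot)$ of both sides or as done in the paper's convention. As a cross-check, one can instead compute directly by induction: for $\ell\in S\cap\upset{j}$ we have $\ell\eqEff j$ by maximality, so the intersection in the first part collapses to $k=j$, and relationality turns it into the $j$-realizability clauses for $\rightarrow$ and $\forall$.
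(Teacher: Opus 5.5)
Your proposal is correct and follows the paper's route: Lemma~\ref{lem:uniformstlop}~(1) applied to the implication clause of Definition~\ref{def:Preal} gives the first claim, and Lemma~\ref{lem:uniformstlop}~(2) together with Proposition~\ref{prop:maximal} gives the maximal case. You also handle two points the paper leaves implicit, and both treatments are sound: $\Phi$ is only guaranteed to be $\eqEff$-relational once weighted by $\intinEff{\isLopE[k]}$, and since Proposition~\ref{prop:maximal} concerns the G\"{o}del--Gentzen translation, your direct collapse of $S\cap\upset{j}$ to $j$ is what gives the literal Kuroda-style equivalence.
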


\subsection{De Jongh-Goodman realizability}\label{subsec:deJonghGoodman}

As mentioned in the Introduction, the idea of combining realizability with forcing dates back to de Jongh and Goodman.
Their realizability is defined using a partial function on $\N$ as an oracle and a set of partial functions on $\N$ as a forcing poset \cite{Goodman78}.
To compare this with our $\PosetP$-realizability, let us focus on the formulation given in \cite[Section~4.6.2]{vOostenbook}.

\begin{definition}\label{def:deJonghGoodman}
    Let $\PFuncset$ denote the set of all partial functions on $\N$, 
    and let $\subseteq$ denote the extension relation on $\PFuncset$.
    Let the symbol $\rKlapp{f}$ denote a partial computable application using oracle $f\in\PFuncset$.

    For a subset $T\subseteq \PFuncset$, a partial function $f\in T$, a natural number $e\in \N$, and an $\Arithl$-sentence $\varphi$,
    we define the relation $\Tfrealat{f}{e} \varphi$ inductively as follows: 
    \[\begin{aligned}
        \Tfrealat{f}{e} (\varphi\rightarrow\psi) &\defarrow \forall g\in T \forall n\in\N. \\
        &(f \subseteq g \text{ and } \Tfrealat{g}{n}\varphi \implies e \rKlapp{g} n \downarrow \text{ and } \Tfrealat{g}{(e \rKlapp{g} n)}\psi), \\
        \Tfrealat{f}{e} (\forall x. \varphi[x]) &\defarrow \forall g\in T \forall n\in\N. \\
        &(f \subseteq g \implies e \rKlapp{g} n \downarrow \text{ and } \Tfrealat{g}{(e \rKlapp{g} n)} \varphi[n] ). 
    \end{aligned}\]
    The other clauses are defined similarly to Kleene realizability relative to $f$. 
\end{definition}

It should be noted that this definition is based on Kripke forcing and differs from the definition introduced by Goodman \cite{Goodman78}.
However, as pointed out in \cite[Section~4.6.2]{vOostenbook}, the original definition can be obtained by combining this with the double negation operator.
Hence the difference is not essential.
In fact, we can connect the above realizability with $\PosetP$-realizability via a uniform construction of local operators that is well understood in categorical realizability and topos theory.

It is well known that in an elementary topos $\CatE$, for any subobject $D\rightarrowtail X$, 
there exists the least local operator $j_{D}$ which forces $D$ dense.
This theorem was first shown by Joyal \cite[Theorem~3.57]{JhonstoneTopostheory}.
While various constructions for such a least local operator are known, we recall the method found in \cite[Lemma~5.4]{PittsPhD} and \cite[Proposition~16.3]{Hyland82}.
Here, we restrict our attention to the subobject defined by the graph of a partial function $f$.

\begin{definition}\label{def:monoandidem}
    Given a partial function $f\in\PFuncset$, 
    we define a predicate $D_{f}(n)\mycoloneqq \{\, \langle n, f(n) \rangle \,\}$ on $\N$; 
    if $f(n)$ is undefined, let $D_{f}(n)\mycoloneqq \emptyset$.
    Then the predicates $m_{f}$ and $j_{f}$ are defined as follows:
    \[\begin{aligned}
        m_{f}(p) &\mycoloneqq \bigcup_{n\in\N} (D_{f}(n) \rightarrow p), \\
        j_{f}(p) &\mycoloneqq \bigcap_{q\subseteq\N} ((p\rightarrow q)\land (m_{f}(q)\rightarrow q) \rightarrow q). 
    \end{aligned}\]
\end{definition}

Our first observation is that a realizer witnessing that $j_{f}$ is a local operator can be obtained uniformly in $f$.

\begin{observation}\label{obs:ImonandLop}
    For the predicates $m_{f}$ and $j_{f}$ in Definition~\ref{def:monoandidem}, 
    \begin{enumerate}
        \item $\bigcap_{f\in\PFuncset} \bigcap_{p, q\subseteq\N} ((p\rightarrow q) \rightarrow (m_{f}(p) \rightarrow m_{f}(q))) \neq \emptyset$.
        \item $\bigcap_{f\in\PFuncset} \intinEff{\isLopE[j_{f}]} \neq \emptyset \qedhere$. 
    \end{enumerate}
\end{observation}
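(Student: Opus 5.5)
Both parts are shown by writing down explicit realizers. The key fact is that $f$ only enters through the set $D_{f}(n)$, and the realizers never inspect that set. Everything happens inside the standard realizability tripos, so each realizer is a fixed code built with the s-m-n theorem and recursion; since its construction never refers to $f$, it automatically lies in the intersection over all $f\in\PFuncset$.

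For (1), take a realizer $a$ of $p\rightarrow q$ and an element $\langle n, b\rangle$ of $m_{f}(p)$, where $b$ realizes $D_{f}(n)\rightarrow p$. (Here I read $\bigcup_{n}$ as carrying the witness $n$, following the convention for $\exists$ used in Definition~\ref{def:Preal}; if the union is literal, simply drop the first component.) We return $\langle n, \lambda x. a\Klapp(b\Klapp x)\rangle$, which is composition of codes. So the realizer is $\lambda a.\lambda c.\langle \pi_{0}c, \lambda x. a\Klapp(\pi_{1}c\Klapp x)\rangle$, which is independent of $f$, $p$, and $q$.

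For (2), we check the three conjuncts of $\intinEff{\isLopE[j_{f}]}$ in turn.
\begin{itemize}
\item Unit, $p\rightarrow j_{f}(p)$: given $r\in p$ and $\langle a, c\rangle$ realizing $(p\rightarrow q)\land(m_{f}(q)\rightarrow q)$, output $a\Klapp r$. This ignores $c$, and hence ignores $f$.
\item Monotonicity, $(p\rightarrow p')\rightarrow(j_{f}(p)\rightarrow j_{f}(p'))$: given $a\colon p\rightarrow p'$, $t\in j_{f}(p)$ and $\langle a', c\rangle$ for $q$, apply $t$ to $\langle a'\circ a, c\rangle$.
\item Idempotence, $j_{f}j_{f}(p)\rightarrow j_{f}(p)$: given $t\in j_{f}(j_{f}(p))$ and $\langle a, c\rangle$ for $q$, first observe that $j_{f}(p)\rightarrow q$ is realized by $\lambda s. s\Klapp\langle a, c\rangle$. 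Then apply $t$ to $\langle \lambda s. s\Klapp\langle a,c\rangle,\ c\rangle$.
\end{itemize}
In each case the realizer is a uniform term in the given data $a, c, t$. All of them are instances of the general fact that the operator $p\mapsto\bigcap_{q}((p\rightarrow q)\land(m(q)\rightarrow q)\rightarrow q)$ is a nucleus for any monotone $m$. Notably, monotonicity of $m_{f}$ from (1) is not even needed for these three clauses.

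The only delicate point is to make sure that the quantification over $q$ is handled correctly: a realizer of $j_{f}(p)$ must work for every $q$ at once. This does hold, because each $t$ above is applied uniformly to data whose shape does not depend on which $q$ is chosen. The unit, monotonicity and idempotence realizers combine into a single triple. This gives one number lying in $\bigcap_{f}\intinEff{\isLopE[j_{f}]}$, which is therefore nonempty.
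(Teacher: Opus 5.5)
Your proof is correct and is essentially the paper's argument made explicit. The paper notes that the monotonicity of $m_f$ is provable in the internal logic without ever inspecting $D_f$, so the resulting realizers are independent of $f$; you simply exhibit these $f$-independent realizers, covering both readings of $\bigcup_n$. The only difference is that the paper obtains (2) from (1), whereas you rightly observe that the three clauses of $\isLopE[j_f]$ hold for $p\mapsto\bigcap_q((p\rightarrow q)\land(m(q)\rightarrow q)\rightarrow q)$ with arbitrary $m$, and that monotonicity is needed only for $m_f\leqEff j_f$.
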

\begin{proof}
    We verify the claim (1).
    In the internal language, $m_{f}$ can be described as 
    $m_{f}(p)\mycoloneqq \exists x:N. (x\in D_{f} \rightarrow p)$. 
    The fact that $m_{f}$ is internally monotone is provable entirely in the internal logic.
    Hence, there is no need to refer to oracle $f$.
    The claim (2) follows from the claim (1). 
\end{proof}

This uniform construction of local operator provides a bridge to degree theory.
We say that $f$ is \emph{$\mathrm{LT}$-reducible} to $g$, denoted by $f \leqreduce g$, if there exists $e\in \N$ such that $e\rKlapp{g}n \simeq f(n)$ holds for all $n\in \N$, 
where $\simeq$ denotes the Kleene equality.
(We adopt this terminology from \cite[Definition~2.14]{Kihara23}. 
Although this definition differs from the original one, the equivalence is easily verified.)
When $f$ and $g$ are characteristic functions of subsets of $\N$, 
the following equivalences are well known:
\begin{enumerate}
    \item $j_{f} \leqEff j_{g}$ holds if and only if $f \leqreduce g$ holds (e.g., \cite{Hyland82}).
    \item $\varphi$ is $j_{f}$-realizable if and only if $\varphi$ is Kleene realizable relative to $f$ (e.g., \cite{Phoa89}).
\end{enumerate}

Our second observation is that these two equivalences hold not only for characteristic functions but also for partial functions, 
and they hold effectively.
\begin{observation}\label{obs:relative}
    For the construction $j_{f}$ in Definition~\ref{def:monoandidem},
    the following hold:
    \begin{enumerate}
        \item $\bigcap_{f, g\in\PFuncset} (\intinEff{j_{f}\leq j_{g}} \leftrightarrow (f \leqreduce g)) \neq \emptyset$.
        \item $\bigcap_{f\in\PFuncset} \bigcap_{p, q\subseteq\N} ((p\rightarrow j_{f}(q)) \leftrightarrow (p\rightarrow^{f} q)) \neq \emptyset$.
    \end{enumerate}
    Here, $(f \leqreduce g)$ denotes the set of natural numbers witnessing $f \leqreduce g$, and 
    $(p\rightarrow^{f} q) \mycoloneqq \{\, e \mid \forall n\in p. (e\rKlapp{f}n\downarrow \text{ and } e\rKlapp{f}n\in q)\,\}$.
\end{observation}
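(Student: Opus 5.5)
The plan is to prove (2) first, since (1) then follows from it, and to construct every realizer uniformly in $f$ and $g$. The main tool is an explicit description of $j_{f}$. Unwinding Definition~\ref{def:monoandidem}, $j_{f}(p)$ is the least fixed point, taken internally, of $q\mapsto p\lor m_{f}(q)$, so its elements are codes of well-founded \emph{dialogue trees}. Such a tree is either a leaf labelled by an element of $p$, or a node that asks a query $n$ and, given the answer $\langle n, f(n)\rangle$, continues with a subtree. Concretely, I will define by transfinite induction the sets $q_{\alpha}$, with $q_0 \mycoloneqq \{\langle 0,a\rangle \mid a\in p\}$, $q_{\alpha+1} \mycoloneqq q_\alpha\cup \{\langle 1,\langle n,e\rangle\rangle \mid e\in (D_f(n)\rightarrow q_\alpha)\}$, and unions at limit stages. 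I will then check, with realizers independent of $f$, $p$ and $q$, that the union $q_\infty$ is equivalent to $j_f(p)$. This is the standard inductive-closure presentation of $j_f$, as in Pitts/Hyland.

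For (2), forward direction: given $c\in (p\rightarrow j_f(q))$, I build an oracle machine $e$. On input $a\in p$, the machine computes $c\Klapp a$, which is a tree in $q_\infty$. It then walks down the tree. At a leaf $\langle 0,b\rangle$ it outputs $b$. At a node $\langle 1,\langle n,d\rangle\rangle$ it queries the oracle for $f(n)$; the query converges because $D_f(n)\neq\emptyset$ is forced by the tree being realized. It then continues with $d\Klapp\langle n,f(n)\rangle$. Well-foundedness of the tree gives termination, by induction on the stage $\alpha$. The code $e$ is obtained by the recursion theorem and does not depend on $f$. Backward direction: given $e\in(p\rightarrow^{f}q)$, I simulate the computation $e\rKlapp{f}a$ step by step. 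Each oracle call at $n$ becomes a node asking $n$, whose continuation $d$, applied to $\langle n,m\rangle$, resumes the simulation with answer $m$. The halting output $b$ becomes a leaf. Since the computation halts after finitely many queries, the tree lies in some $q_{k}$ with $k$ finite. This is again uniform in $f$.

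For (1): given a witness $e$ of $f\leqreduce g$, the map $n\mapsto e\rKlapp{g}n$ together with the pairing gives an element of $(D_f(n)\rightarrow j_g(D_f(n)))$-type data. From this we get uniform realizers of $m_f(q)\rightarrow j_g(q)$, and hence of $j_f(p)\rightarrow j_g(p)$ by the minimality built into $j_f$. Conversely, from a realizer of $j_f\leqEff j_g$, apply it at $p=\bigcup_n D_f(n)$-style singletons. Specifically, use $\{\langle n,f(n)\rangle\}\rightarrow$ via $m_f$: the element $\langle n, \mathrm{id}\rangle$ realizes $m_f(D_f(n))\subseteq j_f(D_f(n))$. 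Feeding it in gives an element of $j_g(D_f(n))$, and by (2) with $p=\N$ this becomes a $g$-oracle computation outputting $\langle n,f(n)\rangle$. From this one extracts $f(n)$, uniformly in $n$, which is an LT-reduction.

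The main obstacle is the explicit tree description of $j_f$ and the uniformity of its equivalence realizers. There is one delicate point. When $f(n)$ is undefined, $D_f(n)=\emptyset$, so the node is vacuously realized. Termination of the walk must then use that only nodes actually reachable from a realizer are visited, so I will phrase the induction over reachable nodes. I would first try to isolate this as a lemma, ``$j_f(p)\eqEff q_\infty$ uniformly'', proved purely in the internal logic so that $f$ enters only through $D_f$.
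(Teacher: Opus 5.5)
The step that fails is the walk in the forward half of (2). You say the oracle query at a node $\langle 1,\langle n,d\rangle\rangle$ converges ``because $D_f(n)\neq\emptyset$ is forced by the tree being realized'', but the reverse is true. If $f(n)$ is undefined, then $D_f(n)=\emptyset$, so $(D_f(n)\rightarrow q_\alpha)=\N$. Hence $\langle 1,\langle n,d\rangle\rangle\in q_1$ for every $d$ and every $p$, including $p=\emptyset$. Restricting to reachable nodes does not help: such a node can be the root, and the query there diverges.

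In fact, with Definition~\ref{def:monoandidem} as written, the observation is false for non-total $f$. If $f(n_0)$ is undefined, then $\langle n_0,0\rangle\in m_f(q)$ for every $q$, under either reading of the union. So $r\Klapp\langle a,b\rangle\mycoloneqq b\Klapp\langle n_0,0\rangle$ realizes $j_f(\emptyset)$, i.e.\ $j_f$ is degenerate.
\begin{itemize}
\item Taking $p=\N$ and $q=\emptyset$ gives $(\N\rightarrow j_f(\emptyset))\neq\emptyset=(\N\rightarrow^{f}\emptyset)$, which refutes (2).
\item Taking $f$ the empty function and $g$ total, a never-halting code witnesses $f\leqreduce g$, while $j_f(\emptyset)\neq\emptyset=j_g(\emptyset)$. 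This refutes (1).
\end{itemize}
The paper's proof transfers results stated for characteristic functions by citation and does not address this point either. So the gap cannot be closed without changing the definition. Take $m_f(p)\mycoloneqq\bigcup_{n\in\mathrm{dom}(f)}(\{\, n\,\}\land(D_f(n)\rightarrow p))$, i.e.\ allow tree nodes only at queries $n\in\mathrm{dom}(f)$. Restricting to total $f$ instead would not suffice for Theorem~\ref{thm:separation}: its $f_n\subseteq f_{n+1}$ have different degrees, so they cannot be total. With that change your walk is justified by construction. Your tree description, together with deriving (1) from (2) and the minimality of $j_f$, then gives a self-contained replacement for the paper's route through $j'_f$, Pitts' Proposition~5.6, Phoa's lemma and Hyland's Theorem~17.2.

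Three smaller points.
\begin{itemize}
\item Your trees record the query $n$. This matches the internal reading $\exists x:N.(x\in D_f\rightarrow p)$, i.e.\ $\bigcup_n(\{\, n\,\}\land(D_f(n)\rightarrow p))$, but not the displayed untagged union. From an element of $(D_f(n)\rightarrow q)$ alone you cannot recover $n$, so closure of $q_\infty$ under the untagged $m_f$ is not uniformly realizable. State the tagged reading explicitly.
\item In the converse of (1), you cannot apply (2) with $p=\N$, because the target $D_f(n)$ depends on $n$. Apply it with $p=\{\, n\,\}$ and $q=D_f(n)$, and use that the realizer of (2) is uniform in $p$ and $q$. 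Then the resulting $g$-code is independent of $n$.
\item This extraction only yields $e\rKlapp{g}n=f(n)$ for $n\in\mathrm{dom}(f)$; it says nothing about divergence outside the domain. So under the Kleene-equality reading of $\leqreduce$, the converse of (1) fails even after the fix. Take $g$ total computable and $f$ the zero function restricted to a non-c.e.\ set. Then $j_f$ and $j_g$ are both equivalent to the identity, yet no $g$-oracle code converges exactly on $\mathrm{dom}(f)$. You need the reading ``$e\rKlapp{g}n=f(n)$ for all $n\in\mathrm{dom}(f)$''. This is also the reading the proof of Theorem~\ref{thm:specialcase} tacitly uses when it claims a single witness of $f\leqreduce g$ for all $f\subseteq g$.
\end{itemize}
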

The claim (1) is obtained by extending the proof of \cite[Theorem~17.2]{Hyland82} to partial functions, 
and (2) by extending the proof of \cite[Lemma in page.~2]{Phoa89}.
It is not difficult to verify that these proofs are effective and independent of oracle.
Strictly speaking, 
the original proofs were based not on $j_{f}$ but on the following local operator $j'_{f}$:
\[j'_{f}(p) \mycoloneqq \bigcap \{\, q\subseteq \N \mid p\land \{\,\ast\,\} \subseteq q \text{ and } m_{f}(q) \subseteq q \,\},\]
where $\ast\in \N$ is a code of the empty partial function.
However, in light of \cite[Proposition~5.6]{PittsPhD} and our Observation~\ref{obs:ImonandLop} (1), 
it is easy to verify that $j_{f}$ and $j'_{f}$ are equivalent uniformly in $f$.

Recently, Kihara provided an excellent framework for understanding the relationship between local operator and oracle \cite{Kihara23, Kihara24rethinking}.
He introduced the notion of \emph{bilayered function}, which is a significant generalization of oracle to describe all local operators in $\Eff$.
This framework enables us to analyze the order structure of local operators in $\Eff$ based on an imperfect information game.
In fact, the proof of (1) is essentially contained in \cite[Proposition~2.22 and Theorem~3.1]{Kihara23}.

Consequently, based on the observations above, 
we see that de Jongh-Goodman realizability is a special case of $\PosetP$-realizability under a mild assumption.
\begin{theorem}\label{thm:specialcase}
    Let $T\subseteq\PFuncset$ be a subset satisfying the following assumption $(\mathrm{A})$:
    \begin{equation*}
        \forall f, g\in T. (f\subseteq g \iff f \leqreduce g) \qquad (\mathrm{A}).
    \end{equation*}
    Then $S_{T}\mycoloneqq \{\, j_{f} \mid f\in T\,\}$ is a uniform subset of $\LopEff$.

    Thus, for the standard lop-frame $\PosetP_{T} \mycoloneqq \PosetP_{S_{T}}$, the following equivalence holds:
    for any partial function $f\in T$ and any $\Arithl$-sentence $\varphi$, 
    \[\intinEff{j_{f}\Vdash_{\PosetP_{T}} \varphi} \eqEff \{\, e\mid \Tfrealat{f}{e}\varphi \,\}.\]
\end{theorem}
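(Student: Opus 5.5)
Uniformity of $S_{T}$ comes first. The first condition, $\bigcap_{k\in S_T}\intinEff{\isLopE[k]}\neq\emptyset$, is immediate from Observation~\ref{obs:ImonandLop}~(2), since a single realizer works for every $j_f$. For the second condition, take $j_f,j_g\in S_T$ with $j_g\in\upset{j_f}$, that is, $j_f\leqEff j_g$. By Observation~\ref{obs:relative}~(1) there is a realizer $r$, independent of $f,g$, converting elements of $\intinEff{j_f\leq j_g}$ into witnesses of $f\leqreduce g$ and back. Since $j_f\leqEff j_g$, such a witness exists, so $f\leqreduce g$. Assumption (A) then gives $f\subseteq g$. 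If $f\subseteq g$, a \emph{fixed} index $e_0$ (the program that queries the oracle at $n$) witnesses $f\leqreduce g$ for all such pairs. Applying the backward half of $r$ to $e_0$ yields a single element of $\bigcap_{j\in S_T}\bigcap_{k\in S_T\cap\upset{j}}\intinEff{j\leq k}$. One subtlety: $j_f$ for different $f$ might coincide up to equivalence. This only enlarges the index set harmlessly, since the realizer is uniform in $(f,g)$.

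For the main equivalence I would induct on $\varphi$, proving the stronger statement that the equivalence $\intinEff{j_f\Vdash_{\PosetP_T}\varphi}\eqEff\{e\mid \Tfrealat{f}{e}\varphi\}$ is realized uniformly in $f\in T$, with one realizer pair per formula. Atomic formulas, $\land$, $\lor$ and $\exists$ are identical on both sides, since Definition~\ref{def:Preal} follows the Kuroda-style clauses and these are plain Kleene clauses. For $\rightarrow$, Corollary~\ref{cor:stlopreal} rewrites the left side as $\bigcap_{k\in S_T\cap\upset{j_f}}(\intinEff{k\Vdash\varphi}\rightarrow k(\intinEff{k\Vdash\psi}))$. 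By the first paragraph, $k=j_g$ ranges over exactly those $g\in T$ with $f\subseteq g$. Observation~\ref{obs:relative}~(2) converts $p\rightarrow j_g(q)$ into $p\rightarrow^{g}q$ uniformly in $g$, and the induction hypothesis at $g$, also uniform, converts $p,q$ into the de Jongh--Goodman realizer sets. The result is $\{e\mid \forall g\in T,\, f\subseteq g,\ \forall n\,(\Tfrealat{g}{n}\varphi\Rightarrow e\rKlapp{g}n\!\downarrow\in\ldots)\}$, which is exactly the clause of Definition~\ref{def:deJonghGoodman}. The $\forall$ clause is handled the same way, using Lemma~\ref{lem:uniformstlop}~(1) with $\Phi(k)=\bigcap_m(\{m\}\rightarrow k(\intinEff{k\Vdash\varphi}(m)))$.

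The main obstacle is the bookkeeping of uniformity and relationality. Lemma~\ref{lem:uniformstlop}~(1) needs $\Phi$ to be $\eqEff$-relational. I would get this because $\intinEff{k\Vdash\varphi}$ is built from $k$ only through $\eqEff$-invariant operations: $k$ appears only positively after applying $\isLopE$ realizers, and $\PosetP_T$ is relational. I would establish this by a side induction. I also need that the realizers from the induction hypothesis do not depend on $g$, so that they can be pushed inside $\bigcap_g$. This is why the induction hypothesis is stated uniformly in $f\in T$. Finally, the intersection over $k\in S_T\cap\upset{j_f}$ must be reindexed as an intersection over $g\in T$ with $f\subseteq g$. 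Equivalent $j_g$'s give equivalent predicates, so this reindexing is harmless.
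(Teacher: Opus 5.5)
Your proposal is correct and follows the paper's proof. Uniformity of $S_T$ comes from Observation~\ref{obs:ImonandLop}~(2), Observation~\ref{obs:relative}~(1), assumption $(\mathrm{A})$, and a single code witnessing $f\leqreduce g$ whenever $f\subseteq g$; the equivalence comes from Corollary~\ref{cor:stlopreal} combined with Observation~\ref{obs:relative}~(2), and your explicit induction with realizers uniform in $f\in T$ (together with reindexing $S_T\cap\upset{j_f}$ as $\{\, g\in T\mid f\subseteq g\,\}$) spells out what the paper's one-line argument leaves implicit.
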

\begin{proof}
    Clearly, there exists a natural number $r$ such that for any $f$, $g\in\PFuncset$, $r$ is a code witnessing $f\leqreduce g$ whenever $f \subseteq g$.
    That is, 
    \[\bigcap_{f \in\PFuncset}\bigcap_{g \in\PFuncset \text{ s.t. } f\subseteq g} (f\leqreduce g) \neq \emptyset.\]
    Thus, if $T$ satisfies the assumption $(A)$, the uniformity of $S_{T}$ follows from Observation~\ref{obs:ImonandLop} (2) and Observation~\ref{obs:relative} (1). 

    The latter equivalence can be shown by Corollary~\ref{cor:stlopreal} and Observation~\ref{obs:relative} (2).
\end{proof}

\section{Semi-classical axioms}\label{sec:semiclassical}

In this section, we investigate our translation for specific classes of formulas.
In particular, with a view towards future proof-theoretic applications, we focus on \emph{semi-classical axioms}, which are frequently studied in constructive mathematics.
Our main approach is to identify sufficient conditions under which our translation of a given axiom becomes equivalent to its usual $j$-translation.
This approach allows us to use existing results on the $j$-translation.

To give a systematic investigation, in Section~\ref{subsec:equivalence}, \ref{subsec:trpcldense}, and \ref{subsec:semiclassicalHA}, 
we return to the general setting of an arbitrary elementary topos.
Hence, all statements presented there are verified in the internal logic.
Finally, in Section~\ref{subsec:separation}, we apply these general results to $\PosetP$-realizability.

\subsection{Equivalence condition}\label{subsec:equivalence}
We begin by discussing the conditions under which our translation $j\forceinP\varphi$ is equivalent to the internal $j$-translation $\intjtrans{\varphi}[j]$.
In general, this equivalence depends on both the syntactic complexity of $\varphi$ and the structure of the lop-frame $\PosetP$.
In the following, we assume $j\in\PosetP$ for simplicity.
For convenience, we introduce a predicate $\EquivP{\varphi}$ that expresses the equivalence of these two translations.
\begin{definition}\label{def:EquivP} 
    For an $\langX$-formula $\varphi[\vec{x}]$, we define the formula $\EquivP{\varphi}$ with a free variable $\PosetP:P(P\Omega)$ as follows: 
    \[\EquivP{\varphi} \mycoloneqq \forall j\inPosetP \forall \vec{x}:X. (j\forceinP\varphi[\vec{x}] \leftrightarrow \intjtrans{\varphi}[j, \vec{x}]). \qedhere\]
\end{definition}

It is easy to observe that implication-free formulas always satisfy this equivalence.
\begin{proposition}\label{prop:impfreeE}
    Let $\varphi[\vec{x}]$ be an implication-free $\langX$-formula (that is, a formula containing neither implication $\rightarrow$ nor negation $\lnot$).
    Then, 
    \[\CatE\models \forall \PosetP\subLopE. \EquivP{\varphi}.\] 
\end{proposition}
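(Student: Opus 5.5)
We argue by induction on the complexity of the implication-free formula $\varphi$, proving the internal statement $\forall \PosetP\subLopE. \EquivP{\varphi}$. Inside the internal logic we fix a lop-frame $\PosetP$ and $j\inPosetP$, and we quantify over $\vec{x}$ at each step. The hypothesis $j\inPosetP$ is used only in the universal case. It also matters that the induction hypothesis is available for every element of $\PosetP$, not just for the fixed $j$, so we state the induction hypothesis in the form $\forall k\inPosetP$.

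The atomic case holds by definition, since $j\forceinP R[\vec{x}] = jR[\vec{x}] = \intjtrans{(R[\vec{x}])}[j]$. For $\bot$ the same reasoning applies: both sides are $j\bot$. For $\land$, both translations are defined componentwise, so the induction hypothesis applies directly. For $\lor$, both sides have the form $j(A\lor B)$, where $A,B$ are the respective translations of the disjuncts; these are equivalent by the induction hypothesis, and $j$ is monotone, so $j(A\lor B)$ on the two sides are equivalent. The existential case is identical. We have $j\forceinP\exists y.\varphi = j(\exists y. j\forceinP\varphi)$ and $\intjtrans{(\exists y.\varphi)}[j] = j(\exists y.\intjtrans{\varphi}[j])$. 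The induction hypothesis at $j$, for each $y$, together with monotonicity of $j$, gives the equivalence.

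The only case needing thought is the universal quantifier, where the two definitions genuinely differ. Our translation is $\forall k\geqinP{j}\forall y. k\forceinP\varphi$, whereas the $j$-translation is $\forall y.\intjtrans{\varphi}[j]$. Here we use $j\inPosetP$ via Corollary~\ref{cor:jinPconstantdomain}, which gives $j\forceinP(\forall y.\varphi)\leftrightarrow \forall y. j\forceinP\varphi[\vec{x},y]$. The induction hypothesis at $j$ then converts the right-hand side to $\forall y.\intjtrans{\varphi}[j,\vec{x},y]$, which is exactly $\intjtrans{(\forall y.\varphi)}[j]$.

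One subtlety to check is that Corollary~\ref{cor:jinPconstantdomain} applies to an arbitrary $\langX$-formula $\varphi$. It does, so nothing beyond $j\inPosetP$ is needed; in particular the inner formula need not be implication-free for that step. Implication is excluded precisely because its clause $\forall k\geqinP{j}(k\forceinP\varphi\rightarrow k\forceinP\psi)$ cannot in general be reduced to the single operator $j$. This explains why the statement is restricted to implication-free formulas, including the exclusion of negation $\lnot\varphi=\varphi\rightarrow\bot$.
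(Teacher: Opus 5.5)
Your proof is correct and follows the paper's own argument. The atomic, $\land$, $\lor$ and $\exists$ cases are immediate from the definitions, and the $\forall$ case uses $j\in\PosetP$ together with Corollary~\ref{cor:jinPconstantdomain} (with $\vec{x}$ as parameters), followed by the induction hypothesis at $j$. Your remark that the induction hypothesis is needed at every $k\in\PosetP$ is harmless but never actually used, since the corollary reduces the universal case to $j$ alone.
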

\begin{proof}
    The base case and the induction steps for $\land$, $\lor$, and $\exists$ follow directly from the definitions of $j\forceinP\varphi$ and $\intjtrans{\varphi}[j]$.
    The step for the universal quantification $\forall$ follows from the assumption $j\inPosetP$ and Corollary~\ref{cor:jinPconstantdomain}.
\end{proof}

However, non-constructive axioms such as the law of excluded middle $\forall \vec{x}:X. \varphi\lor\lnot\varphi$ and the double negation elimination $\forall \vec{x}:X. \lnot\lnot\varphi\rightarrow\varphi$ are not implication-free.
Therefore, the equivalence does not hold for them in general.
To find the sufficient conditions for these equivalences to hold, 
we now introduce the following predicates for the internal $j$-translation.

\begin{definition}\label{def:MNneg}
    Let $\varphi[\vec{x}]$ be an $\langX$-formula.
    \[\begin{aligned}
        \MonoP{\varphi}\mycoloneqq \forall j\inPosetP \forall \vec{x}:X. \forall k\geqinP{j}. (\intjtrans{(\varphi)}[j, \vec{x}] \rightarrow \intjtrans{(\varphi)}[k, \vec{x}]) \\
        \NonoP{\varphi}\mycoloneqq \forall j\inPosetP \forall \vec{x}:X. \forall k\geqinP{j}. (\intjtrans{(\varphi)}[k, \vec{x}] \rightarrow \intjtrans{(\varphi)}[j, \vec{x}]) \\
    \end{aligned}\]
    We also define $\intlnot{j}\varphi\mycoloneqq \varphi\rightarrow j\bot$.
\end{definition}

Note that $\intjtrans{(\lnot\varphi)}[j]\myeqq \intlnot{j}(\intjtrans{\varphi}[j])$ and $j\forceinP(\lnot\varphi) \myeqq \forall k\geqinP{j}.\intlnot{k}(k\forceinP\varphi)$ hold.
The following lemma summarizes the properties for negated formulas $\lnot\varphi$ and double negated formulas $\lnot\lnot\varphi$.
\begin{lemma}\label{lem:EMN}
    For any $\langX$-formula $\varphi[\vec{x}]$, the following implications hold: 
    \begin{enumerate}
        \item $\CatE\models \forall\PosetP\subLopE. \NonoP{\lnot\lnot\varphi} \rightarrow \MonoP{\lnot\varphi}$; 
        \item $\CatE\models \forall\PosetP\subLopE. (\EquivP{\varphi}\land \MonoP{\lnot\varphi}) \rightarrow \EquivP{\lnot\varphi}$;
        \item $\CatE\models \forall\PosetP\subLopE. (\EquivP{\varphi}\land \MonoP{\lnot\varphi}\land \MonoP{\lnot\lnot\varphi}) \rightarrow \EquivP{\lnot\lnot\varphi}$; 
        \item $\CatE\models \forall\PosetP\subLopE. (\EquivP{\varphi}\land \NonoP{\lnot\lnot\varphi}) \rightarrow \MonoP{\lnot\lnot\varphi\rightarrow\varphi}$.
    \end{enumerate}
\end{lemma}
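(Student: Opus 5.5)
The plan is to unfold the internal $j$-translation of negated formulas as $\intjtrans{(\lnot\varphi)}[j] = \intlnot{j}(\intjtrans{\varphi}[j])$ and $\intjtrans{(\lnot\lnot\varphi)}[j] = \intlnot{j}\intlnot{j}(\intjtrans{\varphi}[j])$. Each of the four items then reduces to a short argument in the internal logic of $\CatE$. Throughout we fix a lop-frame $\PosetP$, $j\inPosetP$ and $k\geqinP{j}$, and we suppress $\vec{x}$. We use only three facts: $j\leq k$ gives $j\bot\rightarrow k\bot$; $j\leq j$ holds, so $j$ itself is an instance of $\forall k\geqinP{j}$; and every $p$ implies $\intlnot{k}\intlnot{k}p$.

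For (1), assume $\NonoP{\lnot\lnot\varphi}$ and $\intlnot{j}(\intjtrans{\varphi}[j])$; we must show $\intlnot{k}(\intjtrans{\varphi}[k])$. Suppose $\intjtrans{\varphi}[k]$. Then $\intlnot{k}\intlnot{k}\intjtrans{\varphi}[k]$ holds trivially, and $\NonoP{\lnot\lnot\varphi}$ turns it into $\intlnot{j}\intlnot{j}\intjtrans{\varphi}[j]$. Applying this to the hypothesis yields $j\bot$, hence $k\bot$ since $j\leq k$.

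For (2), note that $j\forceinP(\lnot\varphi) = \forall k\geqinP{j}.\intlnot{k}(k\forceinP\varphi)$. Since every such $k$ lies in $\PosetP$, $\EquivP{\varphi}$ rewrites this as $\forall k\geqinP{j}.\intlnot{k}(\intjtrans{\varphi}[k])$. Instantiating at $k=j$, which is allowed because $j\inPosetP$, gives $\intjtrans{(\lnot\varphi)}[j]$. Conversely, $\MonoP{\lnot\varphi}$ propagates $\intlnot{j}(\intjtrans{\varphi}[j])$ to every $k\geqinP{j}$. Item (3) is then (2) applied to the formula $\lnot\varphi$ in place of $\varphi$: its hypotheses $\EquivP{\lnot\varphi}$ and $\MonoP{\lnot\lnot\varphi}$ come from (2) and from the assumption, respectively.

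For (4), assume $\intlnot{j}\intlnot{j}\intjtrans{\varphi}[j]\rightarrow\intjtrans{\varphi}[j]$ and $\intlnot{k}\intlnot{k}\intjtrans{\varphi}[k]$. By $\NonoP{\lnot\lnot\varphi}$ we get $\intlnot{j}\intlnot{j}\intjtrans{\varphi}[j]$, hence $\intjtrans{\varphi}[j]$. The only slightly nontrivial step, and the one I expect to need care, is transporting this to $k$ without a monotonicity hypothesis on $\intjtrans{\varphi}$ itself. For this I route through the forcing translation. $\EquivP{\varphi}$ gives $j\forceinP\varphi$; Lemma~\ref{lem:monotonicity} gives $k\forceinP\varphi$; and $\EquivP{\varphi}$ at $k\inPosetP$ returns $\intjtrans{\varphi}[k]$, as required.
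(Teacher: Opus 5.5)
Your proof is correct and follows essentially the same route as the paper: items (2)--(4) match the paper's arguments, including the detour through $j\forceinP\varphi$ and Lemma~\ref{lem:monotonicity} in (4), and treating (3) as (2) applied to $\lnot\varphi$ is exactly what the paper's argument amounts to. In (1) you unfold pointwise what the paper phrases via $\intlnot{j}\intlnot{j}\intlnot{j}p\leftrightarrow\intlnot{j}p$, contraposition of $\NonoP{\lnot\lnot\varphi}$, and $\intlnot{j}p\rightarrow\intlnot{k}p$; this is the same argument in a slightly more elementary form.
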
 
\begin{proof}
    In the following, we fix $j$, $k\in\PosetP$ with $j\leq k$ and omit the free variables $\vec{x}$. 

    For (1), we assume $\NonoP{\lnot\lnot\varphi}$ and $\intjtrans{(\lnot\varphi)}[j]$.
    Note that $\forall p:\Omega. \intlnot{j}\intlnot{j}\intlnot{j}p \leftrightarrow \intlnot{j}p$ and $\forall p, q:\Omega. (p\rightarrow q) \rightarrow (\intlnot{j}q \rightarrow \intlnot{j}p)$ hold.
    Then, $\intlnot{j}\intjtrans{(\lnot\lnot\varphi)}[k]$ follows from $\NonoP{\lnot\lnot\varphi}$. 
    Since $\forall p:\Omega. \intlnot{j}p\rightarrow \intlnot{k}p$, we obtain $\intlnot{k}\intjtrans{(\lnot\lnot\varphi)}[k]$, which is equivalent to $\intjtrans{(\lnot\varphi)}[k]$.
    This proves that $\NonoP{\lnot\lnot\varphi}$ implies $\intjtrans{(\lnot\varphi)}[j] \rightarrow \intjtrans{(\lnot\varphi)}[k]$.

    For (2), we assume $\EquivP{\varphi}$ and $\MonoP{\lnot\varphi}$.
    By $\MonoP{\lnot\varphi}$, the formula $\intjtrans{(\lnot\varphi)}[j]$ is equivalent to $\forall k\geqinP{j}. \intlnot{k} (\intjtrans{\varphi}[k])$.
    It then follows from $\EquivP{\varphi}$ that the latter is equivalent to $\forall k\geqinP{j}. \intlnot{k}(k\forceinP \varphi)$, 
    which is also equivalent to $j\forceinP\varphi$. 

    For (3), we assume $\EquivP{\varphi}$, $\MonoP{\lnot\varphi}$, and $\MonoP{\lnot\lnot\varphi}$.
    By (2), the formula $j\forceinP \lnot\lnot\varphi$ is equivalent to $\forall k\geqinP{j}. \intlnot{k} \intjtrans{(\lnot\varphi)}[k]$.
    Using the assumption $\MonoP{\lnot\lnot\varphi}$, this becomes equivalent to $\intjtrans{(\lnot\lnot\varphi)}[j]$.

    To show (4), we assume $\EquivP{\varphi}$, $\NonoP{\lnot\lnot\varphi}$, and $\intjtrans{(\lnot\lnot\varphi\rightarrow\varphi)}[j]$.
    First, $\NonoP{\lnot\lnot\varphi}$ and $\intjtrans{(\lnot\lnot\varphi\rightarrow\varphi)}[j]$ imply the following implications: 
    \[\intjtrans{(\lnot\lnot\varphi)}[k] \rightarrow \intjtrans{(\lnot\lnot\varphi)}[j] \rightarrow \intjtrans{(\varphi)}[j]\]
    Using $\EquivP{\varphi}$ and Lemma~\ref{lem:monotonicity},
    $\intjtrans{(\varphi)}[k]$ follows from $\intjtrans{(\varphi)}[j]$. 
    This shows the implication $\intjtrans{(\lnot\lnot\varphi)}[k] \rightarrow \intjtrans{\varphi}[k]$, which is precisely $\intjtrans{(\lnot\lnot\varphi\rightarrow\varphi)}[k]$.
    Therefore, $\MonoP{\lnot\lnot\varphi\rightarrow\varphi}$ follows from $\EquivP{\varphi}$ and $\NonoP{\lnot\lnot\varphi}$.
\end{proof}

These lemmas provide sufficient conditions for $\EquivP{\forall \vec{x}:X. (\varphi\lor\lnot\varphi)}$ and $\EquivP{\forall \vec{x}:X. (\lnot\lnot\varphi\rightarrow\varphi)}$.
\begin{proposition}\label{prop:MNdneg}
    Let $\varphi[\vec{x}]$ be an $\langX$-formula.
    The following implications hold in $\CatE$:
    \begin{enumerate}
        \item $\CatE\models \forall\PosetP\subLopE. (\EquivP{\varphi}\land \MonoP{\lnot\varphi}) \rightarrow \EquivP{\forall \vec{x}:X. (\varphi\lor\lnot\varphi)}$; 
        \item $\CatE\models \forall\PosetP\subLopE. (\EquivP{\varphi}\land \MonoP{\lnot\lnot\varphi}\land \NonoP{\lnot\lnot\varphi}) \rightarrow \EquivP{\forall \vec{x}:X. (\lnot\lnot\varphi\rightarrow\varphi)}$.
    \end{enumerate}
\end{proposition}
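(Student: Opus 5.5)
Throughout, we fix $\PosetP\subLopE$, $j\inPosetP$ and $\vec{x}$. The plan is to reduce both equivalences to the matrix. The outer universal quantifier is handled first: $j\forceinP(\forall\vec{x}.\,\psi)$ is equivalent to $\forall\vec{x}.\, j\forceinP\psi$ by Corollary~\ref{cor:jinPconstantdomain}, since $j\inPosetP$. On the other side, $\intjtrans{(\forall\vec{x}.\,\psi)}[j]$ is by definition $\forall\vec{x}.\,\intjtrans{\psi}[j]$. It therefore suffices to show $\EquivP{\varphi\lor\lnot\varphi}$ for (1) and $\EquivP{\lnot\lnot\varphi\rightarrow\varphi}$ for (2), pointwise in $\vec{x}$.

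For (1), we have $j\forceinP(\varphi\lor\lnot\varphi) = j(j\forceinP\varphi \lor j\forceinP\lnot\varphi)$ and $\intjtrans{(\varphi\lor\lnot\varphi)}[j] = j(\intjtrans{\varphi}[j]\lor \intjtrans{(\lnot\varphi)}[j])$. Lemma~\ref{lem:EMN}~(2), applied to $\EquivP{\varphi}\land\MonoP{\lnot\varphi}$, gives $\EquivP{\lnot\varphi}$. With $\EquivP{\varphi}$, both disjuncts coincide, and monotonicity of $j$ on $\Omega$ (the third clause of $\isLopE$) transports the equivalence through $j(\cdot\lor\cdot)$. This direction is immediate.

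For (2), we unfold $j\forceinP(\lnot\lnot\varphi\rightarrow\varphi) = \forall k\geqinP{j}.\,(k\forceinP\lnot\lnot\varphi \rightarrow k\forceinP\varphi)$. First we obtain $\MonoP{\lnot\varphi}$ from $\NonoP{\lnot\lnot\varphi}$ via Lemma~\ref{lem:EMN}~(1). Then Lemma~\ref{lem:EMN}~(3) yields $\EquivP{\lnot\lnot\varphi}$, using also $\MonoP{\lnot\lnot\varphi}$ and $\EquivP{\varphi}$. Hence, for every $k\inPosetP$ with $k\geq j$, the inner implication is equivalent to $\intjtrans{(\lnot\lnot\varphi)}[k]\rightarrow\intjtrans{\varphi}[k]$, which is $\intjtrans{(\lnot\lnot\varphi\rightarrow\varphi)}[k]$. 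So $j\forceinP(\lnot\lnot\varphi\rightarrow\varphi)$ is equivalent to $\forall k\geqinP{j}.\,\intjtrans{(\lnot\lnot\varphi\rightarrow\varphi)}[k]$.

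It remains to show that this equals $\intjtrans{(\lnot\lnot\varphi\rightarrow\varphi)}[j]$. The forward direction instantiates $k\mycoloneqq j$, which is legitimate because $j\inPosetP$ and $j\leq j$. The backward direction is exactly $\MonoP{\lnot\lnot\varphi\rightarrow\varphi}$, supplied by Lemma~\ref{lem:EMN}~(4) from $\EquivP{\varphi}\land\NonoP{\lnot\lnot\varphi}$. This last step is the only non-trivial point, and it is precisely where $\NonoP{\lnot\lnot\varphi}$ is essential. The remaining care is in bookkeeping: each use of $\EquivP{\cdot}$ at $k$ requires $k\inPosetP$, which the quantifier $\forall k\geqinP{j}$ guarantees.
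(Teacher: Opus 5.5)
Your proposal is correct and follows essentially the same route as the paper: you reduce the outer $\forall\vec{x}$ via Corollary~\ref{cor:jinPconstantdomain}, obtain (1) from Lemma~\ref{lem:EMN}~(2), and for (2) use Lemma~\ref{lem:EMN}~(1) and (3) to replace $k\forceinP\lnot\lnot\varphi$ and $k\forceinP\varphi$ by their $j$-translations, then Lemma~\ref{lem:EMN}~(4) to collapse $\forall k\geqinP{j}$ back to $j$. Your explicit remark that the forward direction of that last collapse is just instantiation at $k\mycoloneqq j$ (using $j\inPosetP$) fills in a detail the paper leaves implicit.
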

\begin{proof}
    The claim (1) follows immediately from Corollary~\ref{cor:jinPconstantdomain} and Lemma~\ref{lem:EMN} (2).
    
    We show (2).
    By Corollary~\ref{cor:jinPconstantdomain}, $j\forceinP \forall \vec{x}:X. (\lnot\lnot\varphi\rightarrow\varphi)$ is equivalent to:
    \[\forall \vec{x}:X \forall k\geqinP{j}. (k\forceinP\lnot\lnot\varphi [\vec{x}] \rightarrow k\forceinP \varphi[\vec{x}] ).\]
    We now apply Lemma~\ref{lem:EMN} (1) and (3). 
    Under the assumptions $\EquivP{\varphi}$, $\MonoP{\lnot\lnot\varphi}$ and $\NonoP{\lnot\lnot\varphi}$, 
    the above formula is equivalent to:
    \[\forall \vec{x}:X \forall k\geqinP{j}. (\intjtrans{(\lnot\lnot\varphi)}[k, \vec{x}] \rightarrow \intjtrans{\varphi}[k, \vec{x}]).\]
    By Lemma~\ref{lem:EMN} (4), the last formula is equivalent to $\intjtrans{(\forall \vec{x}:X. (\lnot\lnot\varphi\rightarrow\varphi))}[j]$.
\end{proof}

\subsection{Transparency, closedness and denseness}\label{subsec:trpcldense}

Lemma~\ref{lem:EMN} (1) ensures that the condition $\EquivP{\varphi}\land \MonoP{\lnot\lnot\varphi}\land \NonoP{\lnot\lnot\varphi}$ suffices to establish both 
$\EquivP{\forall \vec{x}:X. (\lnot\lnot\varphi\rightarrow\varphi)}$ and $\EquivP{\forall \vec{x}:X. (\varphi\lor\lnot\varphi)}$.
However, it remains unclear which formulas satisfy $\MonoP{\lnot\lnot\varphi}\land \NonoP{\lnot\lnot\varphi}$.
Here, we reveal that \emph{transparency}, a notion introduced in the author's previous study \cite{Nakata24},
provides a sufficient condition for this property.

The notion of transparency was originally found in the investigation of the least local operator for a given formula \cite{Nakata24}.
In that study, $j$-transparency with respect to a local operator $j$ in a topos $\CatE$ was defined via the associated sheaf functor $\jshfunc$ as follows: 
\begin{equation*}
    \jsh{\subinp{\varphi}} = \subinp{\shjtrans{\varphi}} \text{ in } \SubEj(\jsh{X}) \quad\quad [j\text{-transparency}].
\end{equation*}
In other words, a formula $\varphi$ is $j$-transparent if its $\jshfunc$-translation $\shjtrans{\varphi}$ (Definition~\ref{def:shjtrans}) 
corresponds to the subobject obtained by a single application of $\jshfunc$ to $\subinp{\varphi}$.

Recalling the correspondence observed in Section~\ref{subsec:shjtrans}, 
the pullback functor along the unit $\eta_{X}\colon X\to \jsh{X}$ shows that 
the equation above is equivalent to $j\subinp{\varphi} = \subinp{\extjtrans{\varphi}}$ in $\CljSubE(X)$.
Thus, $j$-transparency can be expressed in the internal language as the following equivalence: 
\[\forall \vec{x}:X. (j\varphi[\vec{x}] \leftrightarrow \extjtrans{\varphi}[\vec{x}]).\]
This property may be seen as an analogue of Glivenko's theorem for the double negation translation 
(which states that for any propositional formula $\varphi$, the translation $\varphi^{\lnot\lnot}$ is equivalent to $\lnot\lnot\varphi$ in intuitionistic propositional logic; see \cite{ConstructivismvolI}).
While \cite{Nakata24} utilized this property to discuss the existence of the least local operator for some semi-classical axioms, 
Proposition~\ref{prop:trpimpMN} below shows that transparency, combined with certain conditions, also implies $\MonoP{\lnot\lnot\varphi}\land \NonoP{\lnot\lnot\varphi}$.

For the sake of generality, let us introduce a formula $\Trp{\varphi}{j}{k}$ expressing that $\varphi$ is $k$-transparent relative to $j$.
We also consider a corresponding closedness condition $\Cl{\varphi}{j}{k}$. 

\begin{definition}\label{def:trpandcl}
    For an $\langX$-formula $\varphi[\vec{x}]$, we define 
    \[\begin{aligned}
        \Trp{\varphi}{j}{k} &\mycoloneqq \forall \vec{x}:X. (k\intjtrans{\varphi}[j, \vec{x}] \leftrightarrow \intjtrans{\varphi}[k, \vec{x}]), \\
        \Cl{\varphi}{j}{k} &\mycoloneqq \forall \vec{x}:X. (\intjtrans{\varphi}[j, \vec{x}] \leftrightarrow k\intjtrans{\varphi}[j, \vec{x}]). 
    \end{aligned}\]
\end{definition}

We first verify that the following closure properties, shown in \cite[Lemma~25]{Nakata24}, also hold in the internal logic.
\begin{lemma}\label{lem:trp}
    Let $R$ be an atomic $\langX$-formula, and let $\varphi$ and $\psi$ be $\langX$-formulas.
    Then the following implications hold:
    \begin{enumerate}
        \item $\CatE\models \forall j, k\inLopE. ((j\leq k) \rightarrow \Trp{R}{j}{k})$.
        \item $\CatE\models \forall j, k\inLopE. (\Trp{\varphi}{j}{k} \land \Trp{\psi}{j}{k} \rightarrow \Trp{\varphi\land\psi}{j}{k})$.
        \item $\CatE\models \forall j, k\inLopE. (\Trp{\varphi}{j}{k} \land \Trp{\psi}{j}{k} \land (j\leq k) \rightarrow (\Trp{\varphi\lor\psi}{j}{k} \land \Trp{\exists x:X. \varphi}{j}{k}))$.
        \item $\CatE\models \forall j, k\inLopE. (\Trp{\varphi}{j}{k} \land \Trp{\psi}{j}{k} \land \Cl{\psi}{j}{k} \rightarrow (\Trp{\varphi\rightarrow\psi}{j}{k} \land \Trp{\forall x:X. \varphi}{j}{k}))$.
    \end{enumerate}
\end{lemma}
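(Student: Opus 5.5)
Each of the four items is a direct verification in the internal logic. Fix $j,k\inLopE$ and suppress the free variables. The tools are the basic nucleus facts of Lemma~\ref{lem:loplem}, the definitions of $\intjtrans{(\cdot)}$ and $\Trp{\cdot}{j}{k}$, and one general observation: if $j\leq k$, then $kj p\leftrightarrow kp$ for all $p:\Omega$. This holds because $p\rightarrow jp$ gives $kp\rightarrow kjp$, while $jp\rightarrow kp$ gives $kjp\rightarrow kkp\rightarrow kp$.

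For (1), $\intjtrans{R}[j]=jR$ and $\intjtrans{R}[k]=kR$, so $\Trp{R}{j}{k}$ reads $kjR\leftrightarrow kR$, which is exactly the observation. For (2), $\Trp{\varphi\land\psi}{j}{k}$ follows because $k$ preserves binary meets, so $k(\intjtrans{\varphi}[j]\land\intjtrans{\psi}[j])\leftrightarrow k\intjtrans{\varphi}[j]\land k\intjtrans{\psi}[j]$; the two hypotheses finish this case.

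For (3), I compute $k\intjtrans{(\varphi\lor\psi)}[j]=kj(\intjtrans{\varphi}[j]\lor\intjtrans{\psi}[j])$. The observation removes $j$, leaving $k(\intjtrans{\varphi}[j]\lor\intjtrans{\psi}[j])$. Lemma~\ref{lem:loplem}(3) turns this into $k(k\intjtrans{\varphi}[j]\lor k\intjtrans{\psi}[j])$. Transparency of the components rewrites it as $k(\intjtrans{\varphi}[k]\lor\intjtrans{\psi}[k])$, which is $\intjtrans{(\varphi\lor\psi)}[k]$. The existential case is the same, using Lemma~\ref{lem:loplem}(4) together with monotonicity of $k$ to get $k\exists x.\theta\leftrightarrow k\exists x.k\theta$.

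For (4), write $A=\intjtrans{\varphi}[j]$, $B=\intjtrans{\psi}[j]$, $A'=\intjtrans{\varphi}[k]$, $B'=\intjtrans{\psi}[k]$. We have $kA\leftrightarrow A'$, $kB\leftrightarrow B'$, and $B\leftrightarrow kB$ (closedness). Then $B\leftrightarrow B'$, so $(A\rightarrow B)\leftrightarrow(A\rightarrow kB)$, and by Lemma~\ref{lem:loplem}(1) this is $k$-closed. Moreover $(A\rightarrow kB)\leftrightarrow(kA\rightarrow kB)$, since $k$ is monotone and idempotent. Hence
\[k(A\rightarrow B)\leftrightarrow (A\rightarrow B)\leftrightarrow (kA\rightarrow kB)\leftrightarrow (A'\rightarrow B'),\]
which is $\Trp{\varphi\rightarrow\psi}{j}{k}$.

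The main obstacle is the universal case of (4), because as literally stated the hypotheses give no closedness for $\varphi$, yet commuting $k$ with $\forall$ requires it. I will read the quantifier clause with the intended hypotheses on the body, namely $\Trp{\varphi}{j}{k}$ and $\Cl{\varphi}{j}{k}$, matching \cite[Lemma~25]{Nakata24}. Then $\intjtrans{(\forall x.\varphi)}[j]=\forall x.A$ and each $A$ is $k$-closed, so by Lemma~\ref{lem:loplem}(2) $k\forall x.A\leftrightarrow\forall x.A\leftrightarrow\forall x.kA\leftrightarrow\forall x.A'$. If instead the statement is meant literally with $\psi$'s closedness, I would apply the same argument with the roles relabelled, noting the point explicitly.
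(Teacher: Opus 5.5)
Your proof is correct and is essentially the paper's argument. The paper writes out only the $\lor$ case of (3), via $kj(A\lor B)\leftrightarrow k(A\lor B)\leftrightarrow k(kA\lor kB)$, and the $\rightarrow$ case of (4), via $k(A\rightarrow B)\leftrightarrow(kA\rightarrow kB)$ using $\Cl{\psi}{j}{k}$, and calls the rest ``similar''; your remaining cases fill these in correctly.

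Your worry about the $\forall$ clause is justified, and your reading is the intended one: Proposition~\ref{prop:trpladder} applies (4) with closedness of the body. Taken literally the clause is false, since $\psi:=\top$, $j:=\mathrm{id}$, $k:=\lnot\lnot$ would force the double negation shift $\forall x.\lnot\lnot R\rightarrow\lnot\lnot\forall x.R$ for every atomic $R$, which fails in $\Eff$.
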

\begin{proof}
    Since the other cases can be shown similarly, 
    we only see the case of $\lor$ in (3) and the case of $\rightarrow$ in (4).
    In the following, we assume that $\langX$-formulas do not contain free variables.

    For the case of $\lor$ in (3), assume that $\Trp{\varphi}{j}{k}$, $\Trp{\psi}{j}{k}$, and $j\leq k$.
    Then we have the following equivalences for $k\intjtrans{(\varphi\lor\psi)}[j]$: 
    \[\begin{aligned}
        &k\intjtrans{(\varphi\lor\psi)}[j] =  kj (\intjtrans{\varphi}[j] \lor \intjtrans{\psi}[j]) \\
        \leftrightarrow &k(\intjtrans{\varphi}[j] \lor \intjtrans{\varphi}[j]) \leftrightarrow k(k\intjtrans{\varphi}[j] \lor k\intjtrans{\varphi}[j]).
    \end{aligned}\]
    Since $\Trp{\varphi}{j}{k}$ and $\Trp{\psi}{j}{k}$ hold, the last formula is equivalent to $\intjtrans{(\varphi\lor\psi)}[k]$.

    Next, consider the case of $\rightarrow$ in (4).
    Suppose that $\Trp{\varphi}{j}{k}$, $\Trp{\psi}{j}{k}$, and $\Cl{\psi}{j}{k}$.
    By $\Cl{\psi}{j}{k}$, we obtain the following equivalence for $k\intjtrans{(\varphi\rightarrow\psi)}[j]$: 
    \[\begin{aligned}
        k\intjtrans{(\varphi\rightarrow\psi)}[j] &=  k (\intjtrans{\varphi}[j] \rightarrow \intjtrans{\psi}[j]) \\
        &\leftrightarrow ( k\intjtrans{\varphi}[j] \rightarrow k\intjtrans{\psi}[j]).
    \end{aligned}\]
    Since $\Trp{\varphi}{j}{k}$ and $\Trp{\psi}{j}{k}$ hold, the last formula is equivalent to $\intjtrans{(\varphi\rightarrow\psi)}[k]$.
\end{proof}

Lemma~\ref{lem:trp} ensures that under the assumption $(j\leq k)$, all \emph{coherent} formulas (that is, formulas containing only $\land$, $\lor$, and $\exists$) 
are $k$-transparent relative to $j$.
Furthermore, as observed in \cite{Nakata24} and essentially in \cite{Caramello14}, 
\emph{denseness} allows us to extend transparency to a broader class of formulas.

\begin{definition}\label{def:dense}
    Let $\lnot\lnot$ denote the double negation operator.
    We define a closed term $\DLopE \mycoloneqq \{\, j:P\Omega \mid \isLopE[j] \land j\leq \lnot\lnot \,\}$.
\end{definition}

\begin{remark}\label{rem:dense}
    As is well known, the formula $j\leq \lnot\lnot$ is internally equivalent to $j(\bot)\leftrightarrow \bot$ for any $j\inLopE$.
    Therefore, $\forall j\inDLopE \forall p:\Omega. (\intlnot{j}p \leftrightarrow \lnot p)$ holds in $\CatE$.
\end{remark}

In light of Remark~\ref{rem:lopframe} and Remark~\ref{rem:dense}, 
we immediately obtain the following lemma.
\begin{lemma}\label{lem:denseandDNE}
    Let $R$ be an atomic $\langX$-formula, and let $\varphi$ be an $\langX$-formula.
    The following implications hold: 
    \begin{enumerate}
        \item $\CatE\models \forall j \inDLopE. ((\DNEAx{R}) \rightarrow \intjtrans{(\DNEAx{R})}[j])$.
        \item $\CatE\models \forall j, k\inDLopE. (\intjtrans{(\DNEAx{\varphi})}[j] \rightarrow \Cl{\varphi}{j}{k})$.
    \end{enumerate}
\end{lemma}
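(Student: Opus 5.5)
Both parts are short internal-logic calculations. The main tool is Remark~\ref{rem:dense}: for a dense $j$ we have $j\bot\leftrightarrow\bot$, and hence $\intlnot{j}p\leftrightarrow\lnot p$ for every $p:\Omega$.

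For (1), fix $j\inDLopE$ and assume $\DNEAx{R}$. Unfolding Definition~\ref{def:intjtrans}, we get $\intjtrans{(\DNEAx{R})}[j] = (\intlnot{j}\intlnot{j} jR \rightarrow jR)$. By Remark~\ref{rem:dense}, the antecedent is equivalent to $\lnot\lnot jR$. Since $j\leq\lnot\lnot$, we have $jR\rightarrow\lnot\lnot R$, so $\lnot\lnot jR$ implies $\lnot\lnot\lnot\lnot R$, that is, $\lnot\lnot R$. By the hypothesis $\DNEAx{R}$ this gives $R$, and then $jR$ follows from the unit law $p\rightarrow jp$. This yields (1).

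For (2), fix $j,k\inDLopE$ and assume $\intjtrans{(\DNEAx{\varphi})}[j]$, i.e.\ $\intlnot{j}\intlnot{j}\intjtrans{\varphi}[j]\rightarrow\intjtrans{\varphi}[j]$ (pointwise in $\vec{x}$). We need $\intjtrans{\varphi}[j]\leftrightarrow k\intjtrans{\varphi}[j]$.
\begin{itemize}
\item The forward direction is the unit law for $k$.
\item For the backward direction, write $p\mycoloneqq\intjtrans{\varphi}[j]$. By density of $k$, $kp\rightarrow\lnot\lnot p$. By Remark~\ref{rem:dense} applied to $j$, $\lnot\lnot p\leftrightarrow \intlnot{j}\intlnot{j}p$. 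The hypothesis then gives $p$.
\end{itemize}
Universally quantifying over $\vec{x}$ gives $\Cl{\varphi}{j}{k}$.

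There is no real obstacle here. The only point needing care is to use the equivalence $\intlnot{j}p\leftrightarrow\lnot p$ at the correct operator ($j$ in (1) and in the hypothesis of (2), $k$ only through $k\leq\lnot\lnot$). We also note that $\intjtrans{(\lnot\psi)}[j]$ is literally $\intlnot{j}\intjtrans{\psi}[j]$, because $\intjtrans{\bot}[j]=j\bot$.
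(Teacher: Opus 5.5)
Your proof is correct and follows the paper's approach. The paper gives no separate proof: it only says the lemma follows immediately from Remark~\ref{rem:dense}, namely $\intlnot{j}p\leftrightarrow\lnot p$ for dense $j$ together with $j\leq\lnot\lnot$, and your two calculations spell this out, including the correct reading of the hypothesis in (2) as holding for all $\vec{x}$, which is how it is later used with $\AxScheme{\Sigma_{n}}{DNE}$.
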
 

Consequently, transparency and denseness can guarantee the condition $\MonoP{\lnot\lnot\varphi}\land \NonoP{\lnot\lnot\varphi}$ as follows: 
\begin{proposition}\label{prop:trpimpMN}
    For any $\langX$-formula $\varphi[\vec{x}]$, 
    \[\CatE\models \forall \PosetP\subDLopE \forall j\inLopE. ((\forall k\inPosetP. \Trp{\varphi}{j}{k}) \rightarrow \MonoP{\lnot\lnot\varphi}\land \NonoP{\lnot\lnot\varphi}).\]
\end{proposition}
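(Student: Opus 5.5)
The plan is to reduce both $\MonoP{\lnot\lnot\varphi}$ and $\NonoP{\lnot\lnot\varphi}$ to the single equivalence
\[\intjtrans{(\lnot\lnot\varphi)}[k] \leftrightarrow \lnot\lnot\intjtrans{\varphi}[j]\]
for every $k\in\PosetP$, where $j$ is the fixed local operator of the statement. The right-hand side does not depend on $k$. So for $k_1\leq k_2$ in $\PosetP$, both $\intjtrans{(\lnot\lnot\varphi)}[k_1]$ and $\intjtrans{(\lnot\lnot\varphi)}[k_2]$ are equivalent to the same formula, and hence to each other. This gives monotonicity and its converse simultaneously. (The variable $j$ bound inside $\MonoP{}$ and $\NonoP{}$ is a different one from the fixed $j$; I will rename the fixed one to $j_0$ to avoid a clash.)

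To prove the displayed equivalence, fix $k\in\PosetP$, so that $k\inDLopE$. By Remark~\ref{rem:dense}, $\intlnot{k}p\leftrightarrow\lnot p$ for every $p:\Omega$. Since $\intjtrans{(\lnot\psi)}[k]=\intlnot{k}\intjtrans{\psi}[k]$, applying this twice gives
\[\intjtrans{(\lnot\lnot\varphi)}[k]\leftrightarrow\lnot\lnot\intjtrans{\varphi}[k].\]
The hypothesis $\Trp{\varphi}{j_0}{k}$ then replaces $\intjtrans{\varphi}[k]$ by $k\intjtrans{\varphi}[j_0]$, giving $\lnot\lnot k\intjtrans{\varphi}[j_0]$. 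Throughout, the free variables $\vec{x}$ are carried along under $\forall\vec{x}$.

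It remains to remove $k$ under the double negation: $\lnot\lnot kp\leftrightarrow\lnot\lnot p$ for dense $k$. The backward direction follows from $p\rightarrow kp$. For the forward direction, $k\leq\lnot\lnot$ gives $kp\rightarrow\lnot\lnot p$, hence $\lnot\lnot kp\rightarrow\lnot\lnot\lnot\lnot p\leftrightarrow\lnot\lnot p$. This establishes the displayed equivalence.

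I do not expect a real obstacle. The only points needing care are bookkeeping: the fixed $j_0$ must be kept separate from the bound variables in $\MonoP{}$ and $\NonoP{}$, and $j_0$ does not need to be dense, since the transparency hypothesis only requires $j_0\inLopE$. Note also that $j_0\leq k$ is never used.
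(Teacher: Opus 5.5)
Your proof is correct and is essentially the paper's own argument. The paper proves the same chain
\[\intjtrans{(\lnot\lnot\varphi)}[k] \leftrightarrow \lnot\lnot\intjtrans{\varphi}[k] \leftrightarrow \lnot\lnot k\intjtrans{\varphi}[j] \leftrightarrow \lnot\lnot\intjtrans{\varphi}[j]\]
for each $k\inPosetP$, using density for the first and third steps and $\Trp{\varphi}{j}{k}$ for the second. It then concludes that $\intjtrans{(\lnot\lnot\varphi)}[k]$ does not depend on $k\inPosetP$, which gives both $\MonoP{\lnot\lnot\varphi}$ and $\NonoP{\lnot\lnot\varphi}$. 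You additionally spell out the step $\lnot\lnot kp\leftrightarrow\lnot\lnot p$ and the renaming of the shadowed $j$, both of which the paper leaves implicit.
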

\begin{proof}
    Suppose that $\PosetP\subDLopE$ satisfies the antecedent.
    For $k\inPosetP$, we have the following equivalences: 
    \[\begin{aligned}
        &\intjtrans{(\lnot\lnot\varphi)}[k] \leftrightarrow \lnot\lnot\intjtrans{\varphi}[k] \\
        \leftrightarrow &\lnot\lnot k\intjtrans{\varphi}[j] \leftrightarrow \lnot\lnot \intjtrans{\varphi}[j],  
    \end{aligned}\]
    where the first and third equivalences follow from the denseness of $k$, 
    and the second follows from $\Trp{\varphi}{j}{k}$.
    Therefore, for any $k$, $\ell\inPosetP$, the equivalence $\intjtrans{(\lnot\lnot\varphi)}[k] \leftrightarrow \lnot\lnot \intjtrans{\varphi}[j] \leftrightarrow \intjtrans{(\lnot\lnot\varphi)}[\ell]$ holds.
    This proves $\MonoP{\lnot\lnot\varphi}\land \NonoP{\lnot\lnot\varphi}$.
\end{proof}

\subsection{Semiclassical axioms over $\HA$}\label{subsec:semiclassicalHA}

We now investigate the equivalence $\EquivP{\empty}$ for semi-classical axioms over Heyting arithmetic $\HA$.
Typical examples are the restricted versions of the double negation elimination ($\DNE$) and the law of excluded middle ($\LEM$), which were systematically investigated in \cite{ABHK04, FujiwaraKurahashi22}.
We begin by recalling the standard notation.
\begin{definition}\label{def:Arithhier}
    We inductively define the classes $\Sigma_n$, $\Pi_n$ of $\Arithl$-formulas as follows:
    $\Sigma_0 = \Pi_0$ is the set of all quantifier-free formulas,
    while $\Sigma_{n+1}$ and $\Pi_{n+1}$ are defined by 
    $\Sigma_{n+1} \coloneqq
    \{\, \exists x_1 \cdots \exists x_k \varphi \mid \varphi\in \Pi_{n}, \, 0\leq k\,\}$ and 
    $\Pi_{n+1} \coloneqq
    \{\, \forall x_1 \cdots \forall x_k \varphi \mid \varphi\in \Sigma_{n}, \, 0\leq k\,\}$.
    The class $\Pi_n \lor \Pi_n$ denotes
    the set of formulas of the form $\varphi\lor \psi$
    where $\varphi$, $\psi\in \Pi_n$.
    For a class $\Gamma$ of $\Arithl$-formulas, 
    the following axiom schemes restricted to $\Gamma$ is defined: 
    \[\begin{aligned}
        \AxScheme{\Gamma}{DNE} &\mycoloneqq \{\, \forall \vec{x}. (\lnot\lnot\varphi \rightarrow \varphi) \mid \varphi[\vec{x}] \in \Gamma \,\}, \\
        \AxScheme{\Gamma}{LEM} &\mycoloneqq \{\, \forall \vec{x}. (\varphi \lor \lnot\varphi) \mid \varphi[\vec{x}] \in \Gamma \,\}. 
    \end{aligned}\]
\end{definition}

\begin{remark}
    By formalizing Post's theorem in $\HA$, 
    one obtains a \emph{universal formula} $\univfml{\Sigma_{n}}[e, x]$ for $\Sigma_{n}$ for each $n\geq 1$.
    That is, for any $\Sigma_{n}$ formula $\psi[x]$, there exists a numeral $e_{\psi}$ such that $\HA \vdash \forall x. (\univfml{\Sigma_{n}}[e_{\psi}, x] \leftrightarrow \psi[x])$.
    Similar results hold for $\Pi_{n}$ and $\Pi_{n}\lor \Pi_{n}$.
    For example, we may define $\univfml{\Sigma_{1}} \mycoloneqq \exists w. T[e, x, w]$ and $\univfml{\Pi_{1}}\mycoloneqq \forall w. \lnot T[e, x, w]$, where $T$ is Kleene's $T$-predicate.

    Thus, if we assume $\AxScheme{\Sigma_{0}}{DNE} \mycoloneqq (0=0)$, 
    then for any natural number $n$, the axiom scheme $\AxScheme{\Sigma_{n}}{DNE}$ can be regarded as a single formula in $\HA$.
    Accordingly, we will use $\intjtrans{(\AxScheme{\Sigma_{n}}{DNE})}[j]$ as a single formula.
    Similarly for $\Trp{\Sigma_{n+1}}{j}{k}$ and $\Trp{\Pi_{n+1}}{j}{k}$.
\end{remark}

Since the classes $\Sigma_{n}$ and $\Pi_{n}$ in Definition~\ref{def:Arithhier} consist of prenex normal formulas,  
Proposition~\ref{prop:impfreeE} immediately implies the following corollary.

\begin{corollary}\label{cor:PNFisE}
    Let $\CatE$ be an elementary topos with a natural numbers object.
    For any $\Arithl$-formula $\varphi$ in $\Sigma_{n}$, $\Pi_{n}$, or $\Pi_{n}\lor\Pi_{n}$ for a natural number $n\in\N$, 
    the equivalence $\CatE\models \forall\PosetP\subLopE. \EquivP{\varphi}$ holds.
\end{corollary}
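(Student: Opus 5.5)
The plan is to reduce the statement directly to Proposition~\ref{prop:impfreeE}. The only work is to check that every formula in the listed classes is implication-free in the sense used there, i.e.\ built from atomic $\mathcal{L}_N$-formulas using only $\land$, $\lor$, $\exists$, $\forall$.

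First I will fix how the standard interpretation on $N$ produces $\mathcal{L}_N$-formulas. Each atomic $\Arithl$-formula $f(\vec{x})=g(\vec{x})$ becomes the atomic $\mathcal{L}_N$-formula $R[\vec{x}]$ with $R\mycoloneqq (=_N)\circ\langle f,g\rangle\colon N^{m}\to\Omega$.

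A quantifier-free formula may contain $\rightarrow$ and $\lnot$, so the $\Sigma_0=\Pi_0$ case needs more care. I will treat the whole quantifier-free matrix $\varphi[\vec{x}]$ as a single atomic $\mathcal{L}_N$-formula, namely its characteristic morphism $N^{m}\to\Omega$. This is legitimate because the definition of $\mathcal{L}_X$-formula allows an arbitrary morphism $R\colon X^{m}\to\Omega$ as an atom. If one wants to stay close to the literal syntax instead, there is an alternative: in $\HA$ every quantifier-free formula is provably equivalent to one of the form $t(\vec{x})=0$, and equality on $N$ is decidable in $\CatE$, so one may replace $\varphi$ by such an atom.

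With quantifier-free matrices read as atoms, a $\Sigma_{n}$ or $\Pi_{n}$ formula is an alternating block of quantifiers applied to an atom, so it is implication-free. A $\Pi_n\lor\Pi_n$ formula is a disjunction of two such formulas, so it is implication-free as well. Proposition~\ref{prop:impfreeE} then gives $\CatE\models\forall\PosetP\subLopE.\EquivP{\varphi}$.

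The main obstacle is the bookkeeping for the atomic case. I must make sure that reading the matrix as an atom is compatible with how $\intjtrans{\varphi}$ and $j\forceinP\varphi$ treat the original formula. If these are taken literally on the $\Arithl$-syntax, an implication inside the matrix would be translated with a $\forall k\geqinP{j}$ clause. In that case I will show, by a separate induction on quantifier-free $\varphi$, that
\[
\forall\PosetP\subLopE\,\forall j\inPosetP.\; \bigl(j\forceinP\varphi \leftrightarrow j\varphi\bigr)
\quad\text{and}\quad
\intjtrans{\varphi}[j]\leftrightarrow j\varphi.
\]
This induction rests on the decidability $\forall\vec{x}.(\varphi\lor\lnot\varphi)$ in $\CatE$ for quantifier-free $\varphi$, together with Lemma~\ref{lem:loplem} and Lemma~\ref{lem:monotonicity}. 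Once both translations reduce to $j\varphi$ on the matrix, the equivalence $\EquivP{\varphi}$ holds at the quantifier-free level, and the quantifier and disjunction steps of Proposition~\ref{prop:impfreeE} carry it through to the whole formula.
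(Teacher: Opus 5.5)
Your proposal is correct and follows the paper's argument, which just observes that these formulas are prenex and invokes Proposition~\ref{prop:impfreeE}. Your extra induction, showing $j\forceinP\varphi\leftrightarrow j\varphi\leftrightarrow\intjtrans{\varphi}[j]$ for quantifier-free $\varphi$ and $j\inPosetP$ via decidability and Lemma~\ref{lem:loplem}, correctly fills in a point the paper leaves implicit: $\Sigma_0$ matrices may contain $\rightarrow$ and $\lnot$. Merely re-reading the matrix as an atom would change the formula being translated, so it is this induction that actually does the work.
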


Furthermore, the following lemma can be proven internally.
This result is a $j$-relativized version of \cite[Lemma~29]{Nakata24}.
\begin{proposition}\label{prop:trpladder}
    Let $\CatE$ be an elementary topos with a natural numbers object.
    For any natural number $n$, the following implication holds:
    \[\CatE\models \forall j, k\inDLopE. ( \intjtrans{(\AxScheme{\Sigma_{n}}{DNE})}[j] \land (j\leq k) \rightarrow (\Trp{\Pi_{n+1}}{j}{k} \land \Trp{\Sigma_{n+1}}{j}{k}) ). \]
\end{proposition}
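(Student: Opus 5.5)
The plan is to argue by induction on $n$ inside the internal logic, using the closure properties of Lemma~\ref{lem:trp} together with Lemma~\ref{lem:denseandDNE}. We fix $j,k\inDLopE$ with $j\leq k$. Since $\Sigma_{n+1}$ and $\Pi_{n+1}$ formulas are built from a matrix by blocks of quantifiers, and transparency is stated via the universal formulas $\univfml{\Sigma_{n+1}}$, $\univfml{\Pi_{n+1}}$, it suffices to treat formulas of the shapes $\exists x.\psi$ with $\psi\in\Pi_{n}$ and $\forall x.\psi$ with $\psi\in\Sigma_{n}$. Each quantifier is handled by one application of Lemma~\ref{lem:trp}.

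For the base case $n=0$ the hypothesis $\intjtrans{(\AxScheme{\Sigma_{0}}{DNE})}[j]$ is trivial. Quantifier-free formulas of $\HA$ are decidable atomic combinations, equivalent in $\HA$ to atomic equations $f(\vec x)=g(\vec x)$. Lemma~\ref{lem:trp}(1) therefore gives $\Trp{\Sigma_0}{j}{k}$, and Lemma~\ref{lem:trp}(3) (using $j\leq k$) gives $\Trp{\Sigma_1}{j}{k}$. For $\Pi_1=\forall x.R$, Lemma~\ref{lem:trp}(4) requires in addition $\Cl{R}{j}{k}$. Since $R$ is decidable, $\DNEAx{R}$ holds, so Lemma~\ref{lem:denseandDNE}(1) yields $\intjtrans{(\DNEAx{R})}[j]$, and Lemma~\ref{lem:denseandDNE}(2) then gives $\Cl{R}{j}{k}$.

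For the step $n\to n+1$, assume $\intjtrans{(\AxScheme{\Sigma_{n+1}}{DNE})}[j]$.
\begin{enumerate}
\item Since $\HA$ proves $\AxScheme{\Sigma_{n+1}}{DNE}\to\AxScheme{\Sigma_{n}}{DNE}$, and the translation is sound (Theorem~\ref{thm:soundness}, Theorem~\ref{thm:soundnessHA}, with $\PosetP=\{j\}$ and Proposition~\ref{prop:maximal}), we get $\intjtrans{(\AxScheme{\Sigma_{n}}{DNE})}[j]$. The induction hypothesis then gives $\Trp{\Pi_{n+1}}{j}{k}\land\Trp{\Sigma_{n+1}}{j}{k}$.
\item $\Trp{\Sigma_{n+2}}{j}{k}$ follows from $\Trp{\Pi_{n+1}}{j}{k}$ by Lemma~\ref{lem:trp}(3).
\item For $\Pi_{n+2}=\forall x.\psi$ with $\psi\in\Sigma_{n+1}$, Lemma~\ref{lem:trp}(4) needs $\Trp{\psi}{j}{k}$, which we have, and $\Cl{\psi}{j}{k}$.
\item The latter is obtained from Lemma~\ref{lem:denseandDNE}(2) applied to $\intjtrans{(\DNEAx{\psi})}[j]$, which is an instance of the assumed $\intjtrans{(\AxScheme{\Sigma_{n+1}}{DNE})}[j]$. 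We use here that the translation of the universally closed axiom gives each instance, by Lemma~\ref{lem:loplem}(2)-style calculation, i.e. the $\forall$ clause of $\intjtrans{}$.
\end{enumerate}

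The main obstacle is bookkeeping rather than conceptual. First, transparency must be stated for the universal formulas (a single formula) yet transferred to arbitrary members of the class, which requires that $\Trp{\cdot}{j}{k}$ respects $\HA$-provable equivalence. This holds because the $j$- and $k$-translations are both sound for $\HA$ (the singleton lop-frame case), so $\HA$-equivalent formulas have equivalent translations. Second, blocks of several like quantifiers must be handled; each block is dealt with by iterating Lemma~\ref{lem:trp}(3) or (4). For iterated $\forall$, closedness of the intermediate $\forall$-formulas is needed; these are $j$-closed by construction, and since $k\geq j$ we use Lemma~\ref{lem:denseandDNE}(2) only on the matrix. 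Alternatively, the block can be coded into a single quantifier using pairing in $\HA$, which is what I would do first.
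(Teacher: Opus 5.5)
Your proposal is correct and follows the paper's proof: induction on $n$. Lemma~\ref{lem:trp}(1),(3),(4) supply transparency, and Lemma~\ref{lem:denseandDNE} supplies the $k$-closedness needed at each $\forall$ step, both for the atomic matrix in the base case and for $\Sigma_{n+1}$ in the induction step, just as in the paper. One minor correction to your side remark on iterated $\forall$-blocks: $j$-closedness together with $k\geq j$ does not give $k$-closedness, so what you need is that $k$-closedness of the matrix passes through $\forall$ (via Lemma~\ref{lem:loplem}(2)); your pairing alternative avoids the issue anyway.
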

\begin{proof}
    By induction on $n$.
    First, we show the base case $n=0$.
    Let $R$ be an arbitrary atomic $\Arithl$-formula.
    Then the assumption $j\leq k$ and Lemma~\ref{lem:trp} (1) imply $\Trp{R}{j}{k}$.
    From Lemma~\ref{lem:trp} (3), we have $\Trp{\Sigma_{1}}{j}{k}$.
    Since $\DNEAx{R}$ is provable in $\HA$, it holds in $\CatE$.
    By Lemma~\ref{lem:denseandDNE} (1) and (2), we obtain $\Cl{R}{j}{k}$.
    Combined this with $\Trp{R}{j}{k}$ and Lemma~\ref{lem:trp} (4), we get $\Trp{\Pi_{1}}{j}{k}$.

    Next, consider the induction step.
    Since $\intjtrans{(\AxScheme{\Sigma_{n+1}}{DNE})}[j]$ clearly implies $\intjtrans{(\AxScheme{\Sigma_{n}}{DNE})}[j]$, 
    $\Trp{\Pi_{n+1}}{j}{k}$ and $\Trp{\Sigma_{n+1}}{j}{k}$ follow from the induction hypothesis.
    In addition, we obtain $\Cl{\Sigma_{n+1}}{j}{k}$ by Lemma~\ref{lem:denseandDNE} (2).
    The rest of the argument is the same as the base case $n=0$ by replacing $R$ with $\Sigma_{n+1}$.
\end{proof}

Our goal has been to characterize the conditions under which a lop-frame $\PosetP$ satisfies $\EquivP{\empty}$ for semi-classical axioms over $\HA$.
Combining the results above, we obtain the following sufficient condition.
\begin{theorem}\label{thm:sufconforSCA}
    Let $\CatE$ be an elementary topos with a natural numbers object and $n$ be a natural number.
    Suppose that $\Gamma$ is one of the classes $\{\, \Sigma_{n+1}, \Pi_{n+1}, \Pi_{n+1}\lor\Pi_{n+1}, \Sigma_{n+2}\,\}$ and 
    $\mathrm{Ax}$ denote the axiom scheme $\mathbf{DNE}$ or $\mathbf{LEM}$.
    Then the following implication holds: 
    \[\CatE\models \forall \PosetP\subDLopE \forall j\inPosetP. (\intjtrans{(\AxScheme{\Sigma_{n}}{DNE})}[j] \land \forall k\inPosetP. (j\leq k) \rightarrow \EquivP{\Gamma\text{-}\mathrm{Ax}}).\]
\end{theorem}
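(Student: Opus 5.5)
The plan is to reduce everything to Proposition~\ref{prop:MNdneg}. Fix a lop-frame $\PosetP\subDLopE$ and $j\inPosetP$ such that $\intjtrans{(\AxScheme{\Sigma_{n}}{DNE})}[j]$ holds and $j$ is the minimum of $\PosetP$, i.e.\ $\forall k\inPosetP.(j\leq k)$. Let $\varphi\in\Gamma$ (we may work with the universal formula for $\Gamma$). By Corollary~\ref{cor:PNFisE}, $\EquivP{\varphi}$ holds for every $\Gamma$ in the list, since all these classes consist of prenex formulas or disjunctions of two prenex formulas. So what has to be established is $\MonoP{\lnot\lnot\varphi}\land\NonoP{\lnot\lnot\varphi}$, together with $\MonoP{\lnot\varphi}$, which Lemma~\ref{lem:EMN}~(1) obtains from $\NonoP{\lnot\lnot\varphi}$. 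Proposition~\ref{prop:MNdneg}~(1) and~(2) then give $\EquivP{\Gamma\text{-}\mathbf{LEM}}$ and $\EquivP{\Gamma\text{-}\mathbf{DNE}}$.

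For $\MonoP{\lnot\lnot\varphi}\land\NonoP{\lnot\lnot\varphi}$ I would use Proposition~\ref{prop:trpimpMN}, with the given minimal $j$ playing the role of the $j$ there. This needs $\Trp{\varphi}{j}{k}$ for every $k\inPosetP$. Since $j\leq k$, both are dense, and $\intjtrans{(\AxScheme{\Sigma_{n}}{DNE})}[j]$ holds, Proposition~\ref{prop:trpladder} gives $\Trp{\Sigma_{n+1}}{j}{k}$ and $\Trp{\Pi_{n+1}}{j}{k}$, which settles $\Gamma=\Sigma_{n+1}$ and $\Gamma=\Pi_{n+1}$. For $\Pi_{n+1}\lor\Pi_{n+1}$, apply Lemma~\ref{lem:trp}~(3) to the two disjuncts using $j\leq k$. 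For $\Sigma_{n+2}=\exists\vec{y}.\psi$ with $\psi\in\Pi_{n+1}$, apply the $\exists$ case of Lemma~\ref{lem:trp}~(3) repeatedly, starting from $\Trp{\Pi_{n+1}}{j}{k}$. In each case transparency holds for all $k\inPosetP$, so Proposition~\ref{prop:trpimpMN} applies.

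The main point to check is that Proposition~\ref{prop:trpimpMN} really delivers $\MonoP{\cdot}$ and $\NonoP{\cdot}$ relative to pairs $k\le\ell$ in $\PosetP$, not just relative to $j$. Its proof shows that $\intjtrans{(\lnot\lnot\varphi)}[k]\leftrightarrow\lnot\lnot\intjtrans{\varphi}[j]$ for all $k\inPosetP$, so the value is independent of $k$ and both properties follow. This is exactly where minimality of $j$ and the single hypothesis on $j$ are needed. A further point is that the transparency statements have to be uniform in the free variables $\vec{x}$, which Definition~\ref{def:trpandcl} already builds in. Everything else is bookkeeping of the universal closure, which Proposition~\ref{prop:MNdneg} absorbs via Corollary~\ref{cor:jinPconstantdomain}.
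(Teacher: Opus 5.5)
Your proposal is correct and follows the paper's proof essentially step for step. You get $\EquivP{\varphi}$ from Corollary~\ref{cor:PNFisE}, $\Trp{\varphi}{j}{k}$ for all $k\inPosetP$ from Proposition~\ref{prop:trpladder} and Lemma~\ref{lem:trp}~(3), and then conclude via Propositions~\ref{prop:trpimpMN} and~\ref{prop:MNdneg}; your explicit use of Lemma~\ref{lem:EMN}~(1) to obtain $\MonoP{\lnot\varphi}$ for the $\mathbf{LEM}$ case only spells out a step the paper leaves implicit.
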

\begin{proof}
    Assume that $\PosetP$ and $j$ satisfy the antecedent.
    Let $\varphi[\vec{x}]$ be an $\Arithl$-formula in $\Gamma$.
    By Corollary~\ref{cor:PNFisE}, we have $\EquivP{\varphi}$.
    From the assumption $\forall k\inPosetP. (j\leq k)$, Proposition~\ref{prop:trpladder}, and Lemma~\ref{lem:trp} (3) and (4), 
    we obtain $\forall k\inPosetP. \Trp{\varphi}{j}{k}$.
    Consequently, both $\EquivP{\forall \vec{x}. (\varphi\lor\lnot\varphi)}$ and $\EquivP{\forall \vec{x}. (\lnot\lnot\varphi\rightarrow\varphi)}$ follow from Proposition~\ref{prop:trpimpMN} and Proposition~\ref{prop:MNdneg}.
\end{proof}

\subsection{Separation by $\PosetP$-realizability}\label{subsec:separation}

Using the results establishing so far, we now demonstrate a separation proof for semi-classical axioms via $\PosetP$-realizability.
We here emphasize the distinction between our approach and the usual $j$-realizability.
Regarding $j$-realizability, the following result is well known (see, for example, \cite{ABHK04}; a more detailed analysis appears in \cite[Corollary~47]{Nakata24}).
\begin{example}\label{ex:jrealforSCA}
    For any natural number $n$, let $\Tjump{n}$ denote the degree of $n$-th Turing jump, and let $j_{n}$ be the local operator corresponding to ${\Tjump{n}}$.
    Then $\AxScheme{\Sigma_{n+1}}{DNE}$ and $\AxScheme{\Pi_{n}\lor\Pi_{n}}{DNE}$ are $j_{n}$-realizable,
    while $\AxScheme{\Pi_{n+1}\lor\Pi_{n+1}}{DNE}$ is not.
\end{example}

Combining the soundness of $j$-realizability for $\HA$, 
this result implies that $\HA + \AxScheme{\Sigma_{n+1}}{DNE}$ does not prove $\AxScheme{\Pi_{n+1}\lor\Pi_{n+1}}{DNE}$.
In this way, $j$-realizabilities applies to separation problems.
Indeed, Kihara \cite{Kihara25degreesincomp} establishes numerous separation results for axioms over intuitionistic set theory $\IZF$ by combining $j$-realizability with McCarty realizability.

However, the following proposition indicates a limitation of $j$-realizability regarding the double negated variants of axioms.
\begin{proposition}\label{prop:limitation}
    For any $\Arithl$-sentence $\varphi$ and any local operator $j$ in $\Eff$,
    the following implication holds externally:
    \[\Eff \models \extjtrans{(\lnot\lnot \varphi)} \implies \Eff\models \extjtrans{\varphi}.\]
\end{proposition}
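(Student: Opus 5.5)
The plan is to compute directly with the realizability interpretation of sentences in $\Eff$, splitting on whether $j$ is degenerate. The idea is that a sentence is interpreted as a single subset of $\N$, and the metatheory may reason classically about whether such a subset is empty. First note that for an $\Arithl$-sentence $\varphi$ the formula $\extjtrans{\varphi}$ is a closed formula. It is therefore interpreted by a predicate on $\undset{1}$, which is a single set $\jreal{\varphi}\subseteq\N$, and $\Eff\models\extjtrans{\varphi}$ holds iff $\jreal{\varphi}\neq\emptyset$. By Definition~\ref{def:intjtrans} and the convention $\lnot\psi=\psi\rightarrow\bot$, we have $\extjtrans{(\lnot\lnot\varphi)}=\intlnot{j}\intlnot{j}\extjtrans{\varphi}$, whose set of realizers is
\[ \bigl((\jreal{\varphi}\rightarrow j(\emptyset))\rightarrow j(\emptyset)\bigr). \]

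\emph{Case 1: $j$ is degenerate.} Then $j\eqEff\top$, so $j(p)$ is realizable for every $p\subseteq\N$. By $j$-closedness of the $j$-translation (Lemma~\ref{lem:jclosed} applied to the singleton lop-frame $\{\transpose{j}\}_{P\Omega}$, together with Proposition~\ref{prop:maximal}), $\extjtrans{\varphi}\leftrightarrow j\extjtrans{\varphi}$ holds in $\Eff$. Hence $\jreal{\varphi}$ is realizable, and the conclusion holds regardless of the hypothesis.

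\emph{Case 2: $j$ is non-degenerate.} By Remark~\ref{rem:lopinEff}(3), $j(\emptyset)=\emptyset$. So $\intlnot{j}$ acts on subsets of $\N$ exactly as the realizability negation does: $(p\rightarrow\emptyset)=\N$ if $p=\emptyset$, and $(p\rightarrow\emptyset)=\emptyset$ otherwise. Applying this twice, $(\jreal{\varphi}\rightarrow\emptyset)\rightarrow\emptyset$ equals $\N$ if $\jreal{\varphi}\neq\emptyset$ and equals $\emptyset$ otherwise. Thus realizability of $\extjtrans{(\lnot\lnot\varphi)}$ forces $\jreal{\varphi}\neq\emptyset$, i.e.\ $\Eff\models\extjtrans{\varphi}$.

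The only delicate point is the bookkeeping in Case~2. One must make sure that the interpretation of $j\bot$ is literally the value $j(\emptyset)$, and that "holds in $\Eff$" for a sentence means nonemptiness of a single subset of $\N$; this uses that the terminal object has a one-point carrier, so no uniformity over parameters is needed. Once this is in place, the argument reduces to the classical external case distinction on emptiness, which is exactly why double negations collapse here. I expect no further obstacle.
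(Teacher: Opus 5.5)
Your proof is correct and follows essentially the same route as the paper: you split on whether $j$ is degenerate, and in the non-degenerate case you use $j(\emptyset)=\emptyset$ (the paper phrases this internally as denseness, $\intlnot{j}p\leftrightarrow\lnot p$), so that $\intlnot{j}\intlnot{j}$ collapses to the realizability double negation, which on a single subset of $\N$ is nonempty exactly when that subset is. One caution on notation: in the paper $\jreal{\varphi}$ denotes the Kuroda-style predicate rather than the interpretation of $\extjtrans{\varphi}$, and with the paper's meaning your Case~1 claim fails (e.g.\ $\jreal{\bot}=\emptyset$ although $\Eff\models\extjtrans{\bot}$ for degenerate $j$), so you should use a fresh symbol for the set interpreting $\extjtrans{\varphi}$.
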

\begin{proof}
    The case where $j$ is degenerate is trivial.
    Suppose that $j$ is non-degenerate and $\Eff \models \extjtrans{(\lnot\lnot \varphi)}$.
    Since $j$ is dense in $\Eff$, 
    $\extjtrans{(\lnot\lnot \varphi)}$ is equivalent to $\lnot\lnot(\extjtrans{\varphi})$, which implies that the predicate $\lnot\lnot\intinEff{\extjtrans{\varphi}}$ is realizable.
    As we have seen in Remark~\ref{rem:dense}, $\intinEff{\extjtrans{\varphi}}$ is also realizable.
    We thus obtain $\Eff\models \extjtrans{\varphi}$.
\end{proof}

This limitation reflects the fact that if $j$ is non-degenerate, the lattice $\mathrm{Sub}_{\Eff_{j}}(1)$ of subterminal objects is boolean.
In constructive mathematics, the double negated variant $\lnot\lnot T \mycoloneqq \{\, \lnot\lnot\varphi \mid \varphi\in T\,\}$ of an axiom scheme $T$ often arises naturally.
For instance, \cite{FujiwaraKohlenbach18} provides a systematic study on such variants.
Despite their importance, Proposition~\ref{prop:limitation} shows that $j$-realizability is not effective for separating these variants.

In contrast, our $\PosetP$-realizability can separate such double negated variant, as shown below.
\begin{theorem}\label{thm:separation}
    For any natural number $n$,
    there exists a lop-frame $\PosetP_{n}$ and a local operator $j_{n}$ in $\Eff$ such that 
    $\AxScheme{\Sigma_{n+1}}{DNE} + \lnot\lnot(\AxScheme{\Pi_{n+1}\lor\Pi_{n+1}}{DNE})$ is $\PosetP_{n}$-realizable at $j_{n}$, 
    while $\AxScheme{\Pi_{n+1}\lor\Pi_{n+1}}{DNE}$ is not.
    Therefore, the following underivability result holds:
    \[\HA + \AxScheme{\Sigma_{n+1}}{DNE} + \lnot\lnot(\AxScheme{\Pi_{n+1}\lor\Pi_{n+1}}{DNE}) \not\vdash \AxScheme{\Pi_{n+1}\lor\Pi_{n+1}}{DNE}.\]
\end{theorem}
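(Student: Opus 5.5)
We take a two-element standard lop-frame. Let $j_n$ be the local operator for $\Tjump{n}$ and $k_n$ the one for $\Tjump{n+1}$; write $S\mycoloneqq\{\, j_n, k_n\,\}$ and $\PosetP_n\mycoloneqq\stlop{S}$. Both operators are non-degenerate, hence dense by Remark~\ref{rem:lopinEff}, and $j_n\leqEff k_n$. Since $S$ is finite, it is uniform, so Lemma~\ref{lem:stlop} (2) and Lemma~\ref{lem:uniformstlop} give, internally, $\PosetP_n\subDLopE$, that $j_n$ is minimum in $\PosetP_n$, and that $k_n$ is maximal. By Corollary~\ref{cor:stlopreal}, $\PosetP_n$-realizability at $k_n$ coincides with $k_n$-realizability. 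By Theorem~\ref{thm:realsound} together with the soundness Theorems~\ref{thm:soundness} and \ref{thm:soundnessHA}, it then suffices to check three facts about $\PosetP_n$-realizability at $j_n$.

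First, $\AxScheme{\Sigma_{n+1}}{DNE}$. By Example~\ref{ex:jrealforSCA}, $j_n$ realizes $\AxScheme{\Sigma_{n+1}}{DNE}$, so in particular $\intjtrans{(\AxScheme{\Sigma_{n}}{DNE})}[j_n]$ holds in $\Eff$. All hypotheses of Theorem~\ref{thm:sufconforSCA} are now met: $\PosetP_n\subDLopE$, $j_n\in\PosetP_n$, and $j_n$ is minimum. Hence $\EquivP{\AxScheme{\Sigma_{n+1}}{DNE}}$ holds, and $j_n\forceinP\AxScheme{\Sigma_{n+1}}{DNE}$ is equivalent to the $j_n$-translation, which holds.

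Second, failure of $\AxScheme{\Pi_{n+1}\lor\Pi_{n+1}}{DNE}$. Theorem~\ref{thm:sufconforSCA} applies with $\Gamma=\Pi_{n+1}\lor\Pi_{n+1}$, so the translation at $j_n$ is equivalent to the $j_n$-translation. That translation is not $j_n$-realizable by Example~\ref{ex:jrealforSCA}.

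Third, $\lnot\lnot(\AxScheme{\Pi_{n+1}\lor\Pi_{n+1}}{DNE})$. Write $\psi$ for the single sentence $\AxScheme{\Pi_{n+1}\lor\Pi_{n+1}}{DNE}$. We need $j_n\forceinP\lnot\lnot\psi$, which unfolds to
\[\forall k\geqinP{j_n}.\ \intlnot{k}\bigl(\forall \ell\geqinP{k}.\ \intlnot{\ell}(\ell\forceinP\psi)\bigr).\]
By denseness, $\intlnot{k}$ is $\lnot$. Any $k\geqinP{j_n}$ satisfies $k\le k_n$, since the only candidates are $j_n$ and $k_n$. Using Corollary~\ref{cor:jinPmonotonicity}, the task reduces to showing $\lnot\lnot(k_n\forceinP\psi)$: if every $\ell\ge k$ refuted $\psi$, then in particular $k_n$ would refute it. Since $k_n$ is maximal, $k_n\forceinP\psi$ is the $k_n$-translation of $\psi$. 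This is $k_n$-realizable by Example~\ref{ex:jrealforSCA} at level $n+1$, because $\Pi_{n+1}\lor\Pi_{n+1}$-DNE is realizable relative to $\Tjump{n+1}$. Hence it holds, and so its double negation holds.

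The main obstacle is the internal bookkeeping behind the phrase ``$k$ ranges over $\{j_n,k_n\}$''. Internally, quantifiers over $\PosetP_n$ must be turned into external intersections over $S$, and this relies on Lemma~\ref{lem:uniformstlop} (1). That lemma requires the predicates to be $\eqEff$-relational, and checking this for the translated formulas is where care is needed. The step from $\lnot\lnot$ back to existence of a realizer also uses Remark~\ref{rem:dense}.

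The underivability then follows from soundness. If $\HA+\AxScheme{\Sigma_{n+1}}{DNE}+\lnot\lnot\psi$ proved $\psi$, then by soundness $j_n\forceinP\psi$ would hold, contradicting the second fact.
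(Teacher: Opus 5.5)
Your overall plan matches the paper's: a two-element standard lop-frame on the operators for $\Tjump{n}$ and $\Tjump{n+1}$, Theorem~\ref{thm:sufconforSCA} to reduce the first two facts to $j_n$-realizability, and the argument of Lemma~\ref{lem:Prealdbnegsnt} for the double negation. The gap is the sentence ``Since $S$ is finite, it is uniform.''

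Uniformity in Definition~\ref{def:stuni} depends on the representing predicates, not on their $\eqEff$-classes. It requires one number realizing $\isLopE$ for every member of $S$. It also requires one number realizing every instance of $j\leq k$ in $S$, including the trivial instances $j_n\leq j_n$ and $k_n\leq k_n$. Finiteness gives no such common realizer, and for the natural choice uniformity actually fails. Take $f=\chi_{\Tjump{n}}$, $g=\chi_{\Tjump{n+1}}$ and $S=\{\, j_f, j_g\,\}$. A common realizer of $\intinEff{j_f\leq j_g}$ and $\intinEff{j_g\leq j_g}$ would, via the uniform translation in Observation~\ref{obs:relative}~(1), give a single oracle program $e$ with $e\rKlapp{g}m = f(m)$ and $e\rKlapp{g}m = g(m)$ for all $m$. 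That forces $f=g$, which is false.

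Apart from denseness (Lemma~\ref{lem:stlop}~(2)), everything you then use assumes uniformity. This covers Lemma~\ref{lem:uniformstlop}, which you need for internal minimality of $j_n$, internal maximality of $k_n$, and collapsing internal quantifiers over $\PosetP_n$ to intersections over $S$. It also covers Corollary~\ref{cor:stlopreal}. So all three of your steps rest on this unjustified claim.

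The missing idea is the one the paper builds in from the start. Choose partial functions $f_n\subseteq f_{n+1}$ of degrees $\Tjump{n}$ and $\Tjump{n+1}$, for instance $f_n(\langle i, m\rangle)\mycoloneqq\chi_{\Tjump{i}}(m)$ for $i\leq n$ and undefined otherwise. Set $j_n\mycoloneqq j_{f_n}$ and $j_{n+1}\mycoloneqq j_{f_{n+1}}$ as in Definition~\ref{def:monoandidem}. Then invoke the first part of Theorem~\ref{thm:specialcase}. Its proof supplies a uniform $\isLopE$-realizer (Observation~\ref{obs:ImonandLop}~(2)). It also supplies a uniform realizer of all order instances, from one oracle program witnessing $f\leqreduce g$ for all $f\subseteq g$ together with Observation~\ref{obs:relative}~(1).

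With that choice your three steps go through as written. In the third step you use maximality of $j_{n+1}$ and Corollary~\ref{cor:stlopreal}, while the paper applies Theorem~\ref{thm:sufconforSCA} at $j_{n+1}$; this is a harmless variant. The $\eqEff$-relationality you single out as the main obstacle is routine for predicates arising from internal formulas, and the paper treats it as such. The step that actually needs care is uniformity.
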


This theorem indicates not only that $\PosetP$-realizability is useful for separation problems but also that it is a genuine generalization of $j$-realizability. 
\begin{corollary}\label{cor:comparison}
    For any local operator $j$ in $\Eff$, 
    $\PosetP_{n}$-realizability at $j_{n}$ in Theorem~\ref{thm:separation} does not coincide with $j$-realizability.
\end{corollary}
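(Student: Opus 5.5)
The statement to prove is Corollary~\ref{cor:comparison}. I would argue by contradiction using Theorem~\ref{thm:separation} and Proposition~\ref{prop:limitation}. Fix $n$ and let $\PosetP_{n}$ and $j_{n}$ be as in Theorem~\ref{thm:separation}. Read ``coincide'' in the natural sense: for every $\Arithl$-sentence $\varphi$, $\varphi$ is $\PosetP_{n}$-realizable at $j_{n}$ if and only if $\varphi$ is $j$-realizable. Suppose, for contradiction, that this holds for some local operator $j$ in $\Eff$.

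First dispose of the degenerate case. If $j$ is degenerate, then $j(p)$ is realizable for every $p$, so every sentence, in particular $\bot$, is $j$-realizable. But $\bot$ is not $\PosetP_{n}$-realizable at $j_{n}$, since $\Prealat{j_{n}}{\bot}=\emptyset$ by the atomic clause of Definition~\ref{def:Preal} applied to $0=1$. So $j$ must be non-degenerate.

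Now take $\varphi$ to be the single sentence $\AxScheme{\Pi_{n+1}\lor\Pi_{n+1}}{DNE}$, which is a single formula by the universal-formula remark. By Theorem~\ref{thm:separation}, $\lnot\lnot\varphi$ is $\PosetP_{n}$-realizable at $j_{n}$, so by assumption $\lnot\lnot\varphi$ is $j$-realizable. Since $j$ is non-degenerate, $j$-realizability of a sentence $\psi$ amounts to $\Eff\models\extjtrans{\psi}$. This uses that $\jreal{\psi}$ is the interpretation of $\extjtrans{\psi}$ up to an outer $j$, together with Remark~\ref{rem:lopinEff}: $j(p)\neq\emptyset$ iff $p\neq\emptyset$. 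Hence $\Eff\models\extjtrans{(\lnot\lnot\varphi)}$, and Proposition~\ref{prop:limitation} gives $\Eff\models\extjtrans{\varphi}$. So $\varphi$ is $j$-realizable, and by the assumed coincidence it is $\PosetP_{n}$-realizable at $j_{n}$. This contradicts Theorem~\ref{thm:separation}.

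The only delicate point is the identification of ``$\psi$ is $j$-realizable'' with ``$\Eff\models\extjtrans{\psi}$''. I would check it by the same Kuroda-style remark that precedes Theorem~\ref{thm:realsound}, applied to the singleton lop-frame $\{\,\transpose{j}\,\}$ and combined with Proposition~\ref{prop:maximal}; everything else is immediate.
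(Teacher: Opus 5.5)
Your non-degenerate case is exactly the argument the paper has in mind. Theorem~\ref{thm:separation} makes $\lnot\lnot\varphi$, for $\varphi=\AxScheme{\Pi_{n+1}\lor\Pi_{n+1}}{DNE}$, $\PosetP_{n}$-realizable at $j_{n}$. Proposition~\ref{prop:limitation} then transfers $j$-realizability from $\lnot\lnot\varphi$ to $\varphi$, contradicting the failure of $\varphi$ at $j_{n}$. Your bridge between $\jreal{\psi}\neq\emptyset$ and $\Eff\models\extjtrans{\psi}$ via Remark~\ref{rem:lopinEff} is correct there.

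The degenerate case, however, rests on a false claim. The paper's $j$-realizability is Kuroda-style: $\jreal{\cdot}$ keeps the atomic, $\land$, $\lor$ and $\exists$ clauses of Definition~\ref{def:Preal} with no $j$ in them, and $j$ appears only in the $\rightarrow$ and $\forall$ clauses. Hence $\jreal{\bot}=\emptyset$ for every $j$, degenerate or not, so $\bot$ is never $j$-realizable. What you actually used is $\Eff\models\extjtrans{\bot}$, i.e.\ realizability of $j(\jreal{\bot})$. That is precisely the identification you yourself say needs non-degeneracy.

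The conclusion survives with an easy repair. If $j$ is degenerate, a realizer of $\top\leqEff j$ gives a constant realizer of every $\jreal{(\chi\rightarrow\chi')}$ and every $\jreal{(\forall y.\chi)}$. So the universal sentence $\varphi$ itself is $j$-realizable, while it is not $\PosetP_{n}$-realizable at $j_{n}$.

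Alternatively, you can drop the case split. $\jreal{(\lnot\lnot\varphi)}$ has the form $p\rightarrow j(q)$ and $\jreal{\varphi}$ has the form $\bigcap_{m}(\{m\}\rightarrow j(q_{m}))$, so both are $j$-closed by Lemma~\ref{lem:loplem}~(1),(2). For these two sentences, $j$-realizability therefore agrees with $\Eff\models\extjtrans{(\cdot)}$ for every $j$, and Proposition~\ref{prop:limitation} already covers degenerate $j$.
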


Most lemmas required for the proof of Theorem~\ref{thm:separation} have already been established in the previous sections.
It remains only to note the following lemma.
\begin{lemma}\label{lem:Prealdbnegsnt}
    Let $\varphi$ be an $\langX$-sentence.
    Let $S\subseteq\DLopEff$ be a uniform subset of dense local operators, and 
    consider the standard lop-frame $\stlop{S}$ for $S$.
    Suppose that a local operator $j\in S$ satisfies the following condition: for any $k\in S \cap\upset{j}$, 
    there exists $\ell\in S \cap\upset{k}$ such that $\varphi$ is $\stlop{S}$-realizable at $\ell$.

    Then, $\lnot\lnot\varphi$ is $\PosetP$-realizable at $j$.
\end{lemma}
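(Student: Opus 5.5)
We compute $\intinEff{j\Vdash_{\stlop{S}} \lnot\lnot\varphi}$ directly, using the simplified implication clause of Corollary~\ref{cor:stlopreal}, which is available because $S$ is uniform. Writing $\lnot\lnot\varphi$ as $(\lnot\varphi)\rightarrow\bot$ and unfolding twice, we obtain
\[
\intinEff{j\Vdash_{\stlop{S}} \lnot\lnot\varphi} \eqEff \bigcap_{k\in S\cap\upset{j}} \Bigl( \bigl(\bigcap_{\ell\in S\cap\upset{k}} (\intinEff{\ell\Vdash_{\stlop{S}}\varphi} \rightarrow \ell(\emptyset))\bigr) \rightarrow k(\emptyset) \Bigr).
\]
Here $\intinEff{k\Vdash\bot}=\emptyset$ by the atomic clause, with $\bot$ read as $0=1$. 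Every element of $S$ is dense, hence non-degenerate, so Remark~\ref{rem:lopinEff} gives $\ell(\emptyset)=\emptyset$ and $k(\emptyset)=\emptyset$. The displayed predicate therefore reduces to
\[
\bigcap_{k\in S\cap\upset{j}} \Bigl( \bigl(\bigcap_{\ell\in S\cap\upset{k}} \lnot\intinEff{\ell\Vdash_{\stlop{S}}\varphi}\bigr) \rightarrow \emptyset \Bigr).
\]

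The key step is to fix $k\in S\cap\upset{j}$ and apply the hypothesis. This yields some $\ell\in S\cap\upset{k}$ with $\intinEff{\ell\Vdash_{\stlop{S}}\varphi}\neq\emptyset$. For that $\ell$, the set $\lnot\intinEff{\ell\Vdash_{\stlop{S}}\varphi}$ is empty, so the intersection $\bigcap_{\ell}\lnot\intinEff{\ell\Vdash\varphi}$ is empty. An implication from the empty set to $\emptyset$ equals $\N$. Each conjunct of the outer intersection is then $\N$, so the whole predicate is $\N$, and in particular it is nonempty. Any number realizes it, and no uniformity in $k$ is needed because the realizer is vacuous.

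The main obstacle I expect is bookkeeping rather than computation. The implication clause of Corollary~\ref{cor:stlopreal} was stated with the $k(\cdot)$ wrapping, so the negation must be unfolded with $k(\intinEff{k\Vdash\bot}) = k(\emptyset)=\emptyset$. This relies on density via Remark~\ref{rem:lopinEff}, which requires $S\subseteq\DLopEff$, and that inclusion is part of the hypothesis. It should also be checked that Corollary~\ref{cor:stlopreal} may be applied at the inner level $k$ as well as at $j$. This holds because the corollary is stated for any $j\in\LopEff$. Finally, the equivalence $\eqEff$ preserves nonemptiness, so $\lnot\lnot\varphi$ is $\stlop{S}$-realizable at $j$, which is the lop-frame $\PosetP=\stlop{S}$ of the statement.
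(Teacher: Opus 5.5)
Your proposal is correct and is essentially the paper's argument. You externalize first via Corollary~\ref{cor:stlopreal} and apply density through Remark~\ref{rem:lopinEff}, while the paper first uses density internally to rewrite $j\Vdash_{\stlop{S}}\lnot\lnot\varphi$ as $\forall k\geq_{\stlop{S}} j.\,\lnot\lnot\exists\ell\,(\ldots\land \ell\Vdash_{\stlop{S}}\varphi)$ and then externalizes with Lemma~\ref{lem:uniformstlop}~(1); both reach $\bigcap_{k\in S\cap\upset{j}}\lnot\lnot\bigl(\bigcup_{\ell\in S\cap\upset{k}}\intinEff{\ell\Vdash_{\stlop{S}}\varphi}\bigr)$ in substance. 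One point to tighten: substituting the inner equivalence under $\bigcap_k$ formally requires it uniformly in $k$, which "stated for any $j$" does not by itself give. However, you only use it to conclude $\intinEff{k\Vdash_{\stlop{S}}\lnot\varphi}=\emptyset$ for each $k$, and a pointwise equivalence already preserves emptiness, so your conclusion stands.
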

\begin{proof}
    Recall from Lemma~\ref{lem:stlop} (2) that $\Eff \models (\stlop{S} \subDLopE)$ holds.
    Then $j\Vdash_{\stlop{S}} \lnot\lnot\varphi$ is equivalent to the following formula internally: 
    \[\forall k\geq_{\stlop{S}} j. \lnot\lnot (\exists \ell\inLopE. (\ell\in_{}\stlop{S} \land k\leq \ell \land \ell\Vdash_{\stlop{S}} \varphi) ).\]
    Furthermore, since $S$ is uniform, 
    Lemma~\ref{lem:uniformstlop} (1) implies that $\intinEff{j\Vdash_{\stlop{S}} \lnot\lnot\varphi}$ is equivalent to the following predicate: 
    \[\bigcap_{k\in S \cap\upset{j}} \lnot\lnot (\bigcup_{\ell\in S \cap\upset{k}} \intinEff{\ell\Vdash_{\stlop{S}} \varphi}).\]
    The assumption on $S$ and $j$ ensures that this last predicate is realizable.
\end{proof}

\begin{proof}[Proof of Theorem~\ref{thm:separation}]
    We define a sequence of partial functions $\{\, f_{n} \,\}_{n\in \N}$ such that each $f_{n}$ has the Turing degree corresponding $\Tjump{n}$, 
    and satisfies $f_{n}\subseteq f_{n+1}$ for all $n$.
    Let $j_{n}\mycoloneqq j_{f_{n}}$ by the construction in Definition~\ref{def:monoandidem}.
    By the first part of Theorem~\ref{thm:specialcase}, the set $S_{n} \mycoloneqq \{\, j_{n}, j_{n+1}\,\}$ forms a uniform subset of $\LopEff$.
    Therefore, regarding the standard lop-frame $\PosetP_{n}\mycoloneqq \stlop{S_{n}}$ for $S_{n}$,
    we observe the following: 
    \begin{enumerate}
        \item Since $S_{n}\subseteq \DLopEff$, Lemma~\ref{lem:stlop} (2) implies that $\Eff\models \PosetP_{n}\subDLopE$. 
        \item Since $j_{n}$ is minimum in $S_{n}$, Lemma~\ref{lem:uniformstlop} (3) implies that $\Eff\models \forall k\in\PosetP_{n}. (j_{n}\leq k)$.
        \item Furthermore, as we have seen in Example~\ref{ex:jrealforSCA}, $\Eff\models \intjtrans{(\AxScheme{\Sigma_{n}}{DNE})}[j_{n}]$ holds.
    \end{enumerate}
    Thus, by Theorem~\ref{thm:sufconforSCA}, for any $j\in S_{n}$ and any $T$ in $\{\, \AxScheme{\Sigma_{n+1}}{DNE},\,  \AxScheme{\Pi_{n+1}\lor \Pi_{n+1}}{DNE} \,\}$, 
    the following equivalence holds: 
    \[T \text{ is } \PosetP_{n}\text{-realizable at } j \iff T \text{ is } j\text{-realizable}.\]
    In particular, by Example~\ref{ex:jrealforSCA}, we obtain that 
    $\AxScheme{\Sigma_{n+1}}{DNE}$ is $\PosetP_{n}$-realizable at $j_{n}$, while $\AxScheme{\Pi_{n+1}\lor \Pi_{n+1}}{DNE}$ is not.
    On the other hand, since $\AxScheme{\Pi_{n+1}\lor \Pi_{n+1}}{DNE}$ is $\PosetP_{n}$-realizable at $j_{n+1}$, 
    it follows from Lemma~\ref{lem:Prealdbnegsnt} that $\lnot\lnot(\AxScheme{\Pi_{n+1}\lor \Pi_{n+1}}{DNE})$ is $\PosetP_{n}$-realizable at $j_{n}$.
    Thus, combining these with the soundness of $\PosetP$-realizability for $\HA$, we obtain the desired separation.
\end{proof}

\begin{remark}\label{rem:PNFT}
    Here, we discuss the proof-theoretic significance of our separation result by explaining why the two axioms, $\AxScheme{\Pi_{n}\lor\Pi_{n}}{DNE}$ and $\AxScheme{\Sigma_{n}}{DNE} + \lnot\lnot(\AxScheme{\Pi_{n}\lor\Pi_{n}}{DNE})$,
    are of interest in intuitionistic proof theory.
    As is well known, the prenex normal form theorem (hereafter the $\mathrm{PNFT}$) does not generally hold in intuitionistic theories.
    Fujiwara and Kurahashi \cite{FujiwaraKurahashi21} investigated the strength of the $\mathrm{PNFT}$ in the hierarchy of semi-classical arithmetic.
    They specifically focused on the classes $\mathrm{E}_{n}$ and $\mathrm{U}_{n}$, which are classically equivalent to $\Sigma_{n}$ and $\Pi_{n}$, respectively.
    In this context, they established the following results:
    \begin{enumerate}
        \item $\HA + \AxScheme{\Pi_{n}\lor\Pi_{n}}{DNE}$ is sufficient to prove the $\mathrm{PNFT}$ for $\mathrm{U}_{n}$ \cite[Theorem~5.3]{FujiwaraKurahashi21}.
        Furthermore, it is necessary in a precise sense \cite[Theorem~7.3]{FujiwaraKurahashi21}.
        \item $\HA + \AxScheme{\Sigma_{n}}{DNE} + \lnot\lnot(\AxScheme{\Pi_{n}\lor\Pi_{n}}{DNE})$ is sufficient to prove the $\mathrm{PNFT}$ for $\mathrm{E}_{n}$ \cite[Theorem~5.3 and Corollary~5.4]{FujiwaraKurahashi21}
        (see also \cite[Remark~5.5]{FujiwaraKurahashi21} for an interesting discussion).
    \end{enumerate}
    Thus, these two theories exhibit distinct properties concerning the $\mathrm{PNFT}$.
    It is known that the underivability of the latter from the former follows from the monotone modified realizability interpretation \cite{ABHK04, Kohlenbachbook}.
    However, to the best of the author's knowledge, the underivability in the opposite direction had not been confirmed.

    As a related problem, an erratum \cite{KohlenbachErratum} to \cite{ABHK04} shows that $\AxScheme{\mathrm{E}_{n}}{DNE}$ does not imply $\AxScheme{\Sigma_{n}}{LLPO}$ over $\HA$ for $n=1$.
    Its proof relies on functional interpretation by bar-recursive functionals.
    To extend this proof to the general case for any $n$, a relativization argument would be required.
    On the other hand, our Theorem~\ref{thm:separation} not only shows this separation as a corollary, but also establishes it for the general case of any $n$
    because $\AxScheme{\mathrm{E}_{n}}{DNE}$ follows from $\AxScheme{\Sigma_{n}}{DNE} + \lnot\lnot(\AxScheme{\Pi_{n}\lor\Pi_{n}}{DNE})$ (by the $\mathrm{PNFT}$), 
    and $\AxScheme{\Sigma_{n}}{LLPO}$ is equivalent to $\AxScheme{\Pi_{n}\lor\Pi_{n}}{DNE}$ under $\AxScheme{\Sigma_{n-1}}{DNE}$.
    As a comparison, the argument in \cite{KohlenbachErratum} is useful in that it establishes the underivability of $\AxScheme{\Sigma_{1}}{LLPO}$ even in the presence of $\mathrm{DNS}$ (double negation shift) for arbitrary formulas.
    Nevertheless, we believe that our proof is more elementary and direct.

\end{remark}

\section{Conclusion}\label{sec:conclusion}

Indeed, Theorem~\ref{thm:separation} was obtained using $\PosetP$-realizability with a standard lop-frame.
While this result is still interesting because its proof relies mostly on syntactic arguments, 
it might not be considered an application beyond the framework of de Jongh-Goodman realizability.
Nevertheless, there are many non-standard lop-frames in the effective topos.
Exploring non-trivial applications based on such non-standard lop-frames is left for future work.

Furthermore, we have restricted our attention to first-order formulas in this paper.
As mentioned in the Introduction, Goodman's theorem, which is one of the important applications of de Jongh-Goodman realizability, 
is a conservation result on Heyting arithmetic in all finite types.
This strongly motivates us to extend our translation to finite-type systems.
For such an extension, we expect that ``change of type'' will be essential (similar to the $\jshfunc$-translation introduced in Section~\ref{subsec:shjtrans}).
The main reason is that the associated sheaf functor does not preserve exponentials in general.
The necessity of such type changes also appears in the previous study on the $j$-translation for G\"{o}del's system $\mathrm{T}$ \cite{Xu20}.
The internal Kripke frame considered in Section~\ref{subsec:internalKripkeframe} and the internal sheaf topos discussed in Section~\ref{subsec:internalsheaftopos} 
will be helpful guides for this extension.
By investigating these semantic structures, we expect to find an appropriate way to extend our translation to finite-type systems, second-order logic, and even higher-order logic.
We leave these topics for future research.


\bibliographystyle{plainurl}
\bibliography{myref_lopframe}

\end{document}